\newtheorem{theo}{Theorem}[section]
\newtheorem{cor}[theo]{Corollary}
\newtheorem{rem}[theo]{Remark}
\newtheorem{prop}[theo]{Property}
\newtheorem{propo}[theo]{Proposition}
\newtheorem{lemma}[theo]{Lemma}
\newtheorem{defi}[theo]{Definition}
\newtheorem{hyp}[theo]{Assumption}
\newcommand{\E}{\mathbb{E}}
\newcommand{\R}{\mathbb{R}}
\newcommand{\N}{\mathbb{N}}
\newcommand{\tore}{\mathbb{T}}
\newcommand{\md}{m}
\newcommand{\St}{\mathcal{S}} 
\newcommand{\nuSt}{\lambda} 
\newcommand{\En}{\mathcal{E}} 
\newcommand{\Dr}{\mathcal{D}} 
\newcommand{\Ma}{\mathbb{M}} 
\newcommand{\K}{\mathcal{K}} 
\newcommand{\No}{\mathcal{N}} 
\newcommand{\no}{{\rm n}}
\newcommand{\ph}{\varphi} 
\begin{document}

\title{Convergence analysis of Adaptive Biasing Potential methods for diffusion processes}

\author{Michel Bena\"im}
\address{Universit\'e de Neuch\^atel, Institut de Math\'ematiques, Rue Emile Argand 11, CH-2000 Neuch\^atel, Switzerland}
\email{michel.benaim@unine.ch}

\author{Charles-Edouard Br\'ehier}
\address{Univ Lyon, CNRS, Université Claude Bernard Lyon 1, UMR5208, Institut Camille Jordan, F-69622 Villeurbanne, France}
\email{brehier@math.univ-lyon1.fr}

\keywords{adaptive biasing, self-interacting diffusions, free energy computation}

\subjclass{}
\date{}

\maketitle

\begin{abstract}
This article is concerned with the mathematical analysis of a family of adaptive importance sampling algorithms applied to diffusion processes. These methods, referred to as Adaptive Biasing Potential methods, are designed to efficiently sample the invariant distribution of the diffusion process, thanks to the approximation of the associated free energy function (relative to a reaction coordinate). The bias which is introduced in the dynamics is computed adaptively; it depends on the past of the trajectory of the process through some time-averages.

We give a detailed and general construction of such methods. We prove the consistency of the approach (almost sure convergence of well-chosen weighted empirical probability distribution). We justify the efficiency thanks to several qualitative and quantitative additional arguments. To prove these results , we revisit and extend tools from stochastic approximation applied to self-interacting diffusions, in an original context.
\end{abstract}

\section{Introduction}\label{sec:intro}

In many applications in physics, biology, chemistry, etc... there is a huge interest in the two following problems. First, in sampling probability distributions (denoted by $\mu$), {\it i.e.} in constructing families of independent random variables with distribution $\mu$. Second, in computing averages $\int\varphi d\mu$ of real-valued functions $\varphi$. These questions lead to challenging computational issues, when the support of $\mu$ has large (possibly infinite) dimension -- for instance, when $\mu$ is the equilibrium distribution of a large system of particles, which is the typical situation in the field of molecular dynamics. The scientific literature contains many examples, as well as many approaches to construct efficient approximation procedures. We do not intend to provide an extensive review, but some relevant examples, which are connected to the methodology studied in this article, will be provided.

Many methods are based on stochastic simulation, also called MCMC methods. The idea is to run an ergodic Markov process having $\mu$ as invariant distribution, and to use empirical averages as estimators. A standard example (but there are others) of such a process is given  by the overdamped Langevin dynamics on $\R^d$,
\[
dx_t=-\nabla V(x_t)dt+\sqrt{2\beta^{-1}}dW_t,
\]
whose invariant measure (under appropriate growth and regularity assumptions on the function $V$) is the Boltzmann-Gibbs probability distribution
\[
\mu(dx)=\mu_\star(dx)=\frac{e^{-\beta V(x)}}{Z(\beta)}dx.
\]

One of the main limitations of standard MCMC approaches comes from the fact that the ergodic dynamics are metastable whenever $\mu$ is multimodal. In the example above, this happens when $V$ has several local minima. A direct simulation is not able to efficiently and accurately sample the rare transitions between the metastable states, hence the need for advanced Monte-Carlo methods.

\bigskip

Many strategies have been proposed, analyzed and applied, to overcome this issue. The associated variance reduction approaches may be divided into two main families. On the one hand, Importance Sampling strategies are based on changing the reference probability measure. The realization of the rare events is enhanced by appropriate reweighting of $\mu$. In our context, such strategies require to simulate modified processes, which are constructed by biasing the dynamics. On the other hand, Splitting strategies use interacting replicas, with mutation and selection procedures, without modifying the process dynamics.

The methods studied in this article are an example of Adaptive Biasing methods. They are based on the Importance Sampling strategy and the use of the so-called Free Energy function (which will be introduced below). When going into the details of the schemes and of the applications, there are many different versions; they all aim at flattening the free energy landscape, and to make free energy barriers disappear. We refer to the monograph~\cite{LRS_book} for an extensive review of such methods, and to~\cite[Section 4]{LS_acta} for a survey on mathematical techniques. To name a few of the versions, we mention the following examples of adaptive biasing methods: the adaptive biasing force~\cite{ABF_1},~\cite{ABF_2}~\cite{ABF14}; the Wang-Landau algorithm~\cite{WL_1},~\cite{WL_2}; metadynamics~\cite{M},~\cite{WTM}; the self-healing umbrella sampling method~\cite{SHUS}. For related mathematical results, see for instance~ \cite{CL11},~\cite{JLR10},~\cite{LM11},~\cite{LRS08}  (adaptive biasing force);~\cite{FJKLS14},~\cite{FJKLS15} (Wang-Landau),~\cite{FJLS17} (self-healing umbrella sampling). This list is not exhaustive. We also refer to the recent survey paper~\cite{D17} (and to references therein) for discussions and comparison of these methods.

Our aim in this article is to give a mathematical analysis of a family of methods, independently of a comparison with the other methods mentioned above: the Adaptive Biasing Potential methods, related to~\cite{DLLSFl10}. In such methods, one constructs adaptive approximation of a potential energy function, instead of a mean force (see~\cite{LRS07} for a discussion), hence the name. One of the key aspects of this work is that the dynamics is biased using quantities computed as time-averages over a single realization of the dynamics

Let us mention the type of mathematical properties such algorithms are required to satisfy (exactly or in an approximate sense) for the estimation of averages $\int \varphi d\mu$. On the one hand, the consistency is the long-time convergence to this quantity, in a strong (almost sure or $L^p$) sense, or in a weak sense (convergence of the expected value). On the other hand, the efficiency is generally studied in terms of the asymptotic mean-square error. It may also be considered from the point of view of the long-time behavior of occupation measures of the process.

A preliminary analysis of the Adaptive Biasing Potential methods considered in this article, has been performed in~\cite{BB16}, in a simplified framework, without proofs. The aim of the present article is to provide the missing arguments, in a more abstract framework, and to study substantial generalizations. Below we first present the methods in the simplified framework from~\cite{BB16}, see Section~\ref{sec:intro_ABP}: the strategy and the results are exposed. We then present the general framework of the article, and the associated results, in Sections~\ref{sec:Intro_general} and~\ref{sec:Intro_orga}.

\subsection{Adaptive Biasing Potential method in a simplified framework}\label{sec:intro_ABP}

This section is pedagogical: the ideas are introduced independently of the abstract notation which will allow us to consider many examples of diffusion processes. We are interested in sampling probability distributions on the flat $d$-dimensional torus $\tore^d=\bigl(\mathbb{R} /\mathbb{Z}\bigr)^d$, of the following form:
\[
\mu(dx)=\mu_\star(dx)=\frac{\exp\bigl(-\beta V(x)\bigr)}{Z(\beta)}dx,
\]
where $V:\tore^d\to \R$ is a smooth potential energy function, and $\beta\in(0,\infty)$ is referred to as the inverse temperature. Finally, $dx$ is the Lebesgue measure on $\tore^d$, and $Z(\beta)=\int_{\tore^d}e^{-\beta V(x)}dx$ is the normalization constant.

A natural choice of associated ergodic process is given by the overdamped Langevin dynamics (or Brownian dynamics)
\begin{equation}\label{eq:OL_intro}
dX_t^0=-\nabla V(X_t^0)dt+\sqrt{2\beta^{-1}}dW_t, \quad X_0^0=x_0,
\end{equation}
where $\bigl(W_t\bigr)_{t\ge 0}$ is a standard Wiener process on $\tore^d$.

By ergodicity, the empirical distribution $\overline{\mu}_t^0=\frac{1}{t}\int_0^t \delta_{X_r^0}dr$ converges (in distribution), almost surely, towards $\mu_\star$, when time $t$ goes to infinity. As already mentioned, the convergence may be slow when $V$ has several local minima.

\bigskip

To accelerate convergence to equilibrium, other stochastic processes need to be used. In this article, the dynamics is modified with an adaptive change of the potential energy function: the function $V$ is replaced with a time-dependent function $V_t$ -- hence the terminology {\it Adaptive Biasing Potential} (ABP) method,
\[
dX_t=-\nabla V_t(X_t)dt+\sqrt{2\beta^{-1}}dW_t.
\]
Compared with other methods mentioned above, one of the specificities of the method considered in this article, is the structure of the time-dependent potential energy function $V_t$. It is constructed as $V_t=V-A_t\circ \xi$, where $\xi:(x_1,\ldots,x_d)\in\tore^d\mapsto (x_1,\ldots,x_\md)\in \tore^\md$, with $\md\in\left\{1,\ldots,d-1\right\}$, is an auxiliary function, referred to as the reaction coordinate, and $A_t:\tore^\md\to \R$ is an approximation (in the regime $t\to \infty$) of the so-called Free Energy function. In applications, the dimension $\md$ is chosen much smaller than $d$, and typically $\md\in\left\{1,2,3\right\}$. It thus remains to explain how the function $A_t$ is constructed adaptively. This is done in terms of the values $\bigl(X_r\bigr)_{0\le r< t}$ of the process $X$ up to time~$t$.

Precisely, the dynamics of the ABP method, in the simplified framework considered in the current section (for the generalized version, see Equation~\eqref{eq:ABP_full}), is given by the following system:
\begin{equation}\label{eq:ABP_simple}
\begin{cases}
dX_t=-\nabla\bigl(V-A_t\circ \xi\bigr)(X_t)dt+\sqrt{2\beta^{-1}}dW(t)\\
\overline{\mu}_t=\frac{\overline{\mu}_0+\int_{0}^{t}\exp\bigl(-\beta A_r\circ\xi(X_r)\bigr)\delta_{X_r}dr}{1+\int_{0}^{t}\exp\bigl(-\beta A_r\circ\xi(X_r)\bigr)dr}\\
\exp\bigl(-\beta A_t(z)\bigr)=\int_{\tore^d}K\bigl(z,\xi(x)\bigr)\overline{\mu}_t(dx),~\forall z\in\tore^m,
\end{cases}
\end{equation}
where a smooth kernel function $K:\tore^\md\times \tore^\md \to (0,+\infty)$, such that $\int_{\tore^\md}K(z,\zeta)dz=1, \forall \zeta\in \tore^\md$, is introduced. The diffusion equation (first line of~\eqref{eq:ABP_simple}) depends on the gradient $\nabla A_t$ of the real-valued function $A_t$, whereas the probability distribution $\overline{\mu}_t$ on $\tore^d$ does not have a density (second line of~\eqref{eq:ABP_simple}), hence the need for a smooth kernel.

For a practical implementation of the method, an additional time discretization of the continous-time dynamics is required. However, in this article, we do not discuss this question, and we only perform the analysis at the continuous-time level.

The expression in the third line in~\eqref{eq:ABP_simple} is motivated by the definition of the Free Energy function:
\begin{equation}\label{eq:FE_simple}
\exp\bigl(-\beta A_\star(z)\bigr)=\int_{\tore^{d-\md}}\frac{\exp\bigl(-\beta V(z,x_{\md+1},\ldots,x_d)\bigr)}{Z(\beta)}dx_{\md+1}\ldots dx_d.
\end{equation}
To simplify notation, we omit the dependence of $A_\star$ with respect to the parameter $\beta$.

The unknows in~\eqref{eq:ABP_simple} are the stochastic processes $t\mapsto X_t \in\tore^d$, $t\mapsto \overline{\mu}_t\in \mathcal{P}(\tore^d)$ (the set of Borel probability distributions on $\tore^d$, endowed with the usual topology of weak convergence of probability distributions), and $t\mapsto A_t\in \mathcal{C}^{\infty}(\tore^\md)$ (the set of infinitely differentiable functions on $\tore^\md$). Initial conditions $X_{t=0}=x_0$ and $\overline{\mu}_{t=0}=\overline{\mu}_0$ are prescribed.

An important observation is that the third equation in~\eqref{eq:ABP_simple} introduces a coupling between the evolutions of the diffusion process $X_t$ and of the probability distribution $\overline{\mu}_t$. Thus the system defines a type of self-interacting diffusion process. However, a comparison with~\cite{BLR02} and subsequent articles~\cite{BR03},~\cite{BR05} and~\cite{BR11}, reveals a different form of coupling. One of the aims of this article is to study the new arguments which are required for the study of the system~\eqref{eq:ABP_simple}.

The most important quantity in~\eqref{eq:ABP_simple} is the random, time-dependent, probability distribution $\overline{\mu}_t$. Observe that its construction requires two successive operations to be performed. First, a weighted occupation measure $\mu_t=\overline{\mu}_0+\int_{0}^{t}\exp\bigl(-\beta A_r\circ\xi(X_r)\bigr)\delta_{X_r}dr$ is computed. Second, this measure is normalized to define a probability distribution, $\overline{\mu}_t=\frac{\mu_t}{\int_{x\in\tore^d}\mu_t(dx)}$. The weights $\exp\bigl(-\beta A_r\circ\xi(X_r)\bigr)$ in the definition of $\overline{\mu}_t$ are chosen so as to obtain the following consistency result for the estimation of $\mu_\star$.
\begin{theo}\label{th:cv_simple}
Almost surely, $\overline{\mu}_t$ converges to $\mu_\star$, in $\mathcal{P}(\tore^d)$.

Moreover, define the function $A_\infty$, such that $\exp\bigl(-\beta A_\infty(\cdot)\bigr)=\int K(\cdot,\xi(x))\mu_{\star}(dx)$. Then, almost surely, $A_t$ converges to $A_\infty$, in $\mathcal{C}^{k}(\tore^\md)$, $\forall~k\in\N$.
\end{theo}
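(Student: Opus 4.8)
The plan is to recast the coupled system~\eqref{eq:ABP_simple} as a self-interacting diffusion amenable to the ODE method from stochastic approximation, following the philosophy of~\cite{BLR02} but accounting for the new coupling structure. The first step is to establish well-posedness of~\eqref{eq:ABP_simple} and, crucially, a priori bounds: since $K$ is smooth and bounded below by a positive constant on the compact torus, the third line shows that $A_t$ and all its derivatives are controlled by $\|\overline{\mu}_t\|_{\rm TV}=1$, uniformly in $t$; hence the biased drift $\nabla(V-A_t\circ\xi)$ is uniformly bounded and Lipschitz, the diffusion $X_t$ does not explode, and the family $(A_t)_{t\ge0}$ lives in a compact subset of $\mathcal{C}^k(\tore^\md)$ for every $k$. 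This uniform regularity is what makes the kernel smoothing do its job and will be used repeatedly.

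Next I would pass to a logarithmic time scale $s=\log t$ (or equivalently work with the normalized weighted occupation measure $\overline{\mu}_t$ directly) and identify the ``mean field'' of the dynamics. The key point is that, for a \emph{fixed} bias $A$, the diffusion $dX=-\nabla(V-A\circ\xi)dt+\sqrt{2\beta^{-1}}dW$ is ergodic with an explicit invariant measure $\mu^A(dx)\propto e^{-\beta(V-A\circ\xi)(x)}dx$, and the reweighting by $e^{-\beta A\circ\xi(x)}$ exactly compensates the bias: $\frac{e^{-\beta A\circ\xi(x)}\mu^A(dx)}{\int e^{-\beta A\circ\xi}d\mu^A}=\mu_\star(dx)$, independently of $A$. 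Therefore the natural candidate limit set for $\overline{\mu}_t$ is the single point $\mu_\star$, and the candidate limit for $A_t$ is $A_\infty$ defined via $e^{-\beta A_\infty}=\int K(\cdot,\xi(x))\mu_\star(dx)$. I would then write the evolution of $\overline{\mu}_t$ (tested against a fixed smooth $\varphi$) as $d\dual{\overline{\mu}_t}{\varphi}=\frac{e^{-\beta A_t\circ\xi(X_t)}}{w_t}\bigl(\varphi(X_t)-\dual{\overline{\mu}_t}{\varphi}\bigr)dt$ with $w_t=1+\int_0^t e^{-\beta A_r\circ\xi(X_r)}dr$, and show $w_t\to\infty$ a.s.\ (using the lower bound on $A_t$), so the increments vanish — this is the stochastic-approximation structure with vanishing step size $dt/w_t$.

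The heart of the argument is the averaging step: decompose the drift term into its conditional average, governed by the ergodic behavior of $X$ under the current frozen bias $A_t$, plus a fluctuation. Concretely one controls $\frac{1}{w_t}\int_0^t e^{-\beta A_r\circ\xi(X_r)}\bigl(\varphi(X_r)-\dual{\mu_\star}{\varphi}\bigr)dr\to 0$ by introducing, for each frozen $A$, the solution $Q_A\varphi$ of the Poisson equation $\mathcal{L}_A(Q_A\varphi)=\varphi-\dual{\mu^A}{\varphi}$ associated with the generator $\mathcal{L}_A$ of the $A$-biased diffusion; the uniform ellipticity and the uniform $\mathcal{C}^k$-bounds on $A_t$ give uniform bounds on $Q_A\varphi$ and on its dependence on $A$, so an Itô-formula/asymptotic-pseudo-trajectory estimate (in the spirit of~\cite{BR05},~\cite{BLR02}) shows the remainder is negligible. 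One then concludes that $\overline{\mu}_t$ is an asymptotic pseudo-trajectory of a flow whose unique global attractor is $\mu_\star$; together with the a priori compactness this yields $\overline{\mu}_t\to\mu_\star$ a.s.\ in $\mathcal{P}(\tore^d)$. The convergence $A_t\to A_\infty$ in every $\mathcal{C}^k$ is then immediate from the third line of~\eqref{eq:ABP_simple}: the map $\nu\mapsto -\frac1\beta\log\int K(\cdot,\xi(x))\nu(dx)$ is continuous from $\mathcal{P}(\tore^d)$ (weak topology) to $\mathcal{C}^k(\tore^\md)$, because $K$ is smooth and bounded below.

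The main obstacle I anticipate is the averaging/Poisson-equation step, specifically the need for \emph{uniform-in-$A$} ergodic estimates and a quantitative handle on how fast $X$ equilibrates relative to how fast $A_t$ moves. This is where the difference in coupling structure compared with~\cite{BLR02,BR03,BR05,BR11} shows up: here the bias enters the \emph{drift} of the diffusion (changing the generator, not merely an additive forcing), so one must verify that $A\mapsto\mathcal{L}_A$, $A\mapsto\mu^A$, and $A\mapsto Q_A\varphi$ are suitably Lipschitz in a topology compatible with the a priori bounds, and that the self-interaction does not destroy the separation of time scales. Handling this carefully — presumably by exploiting the uniform ellipticity and the explicit Gibbs form of $\mu^A$ to get spectral-gap bounds uniform over the compact set of admissible biases — is the crux; the rest is a (by now fairly standard) stochastic-approximation packaging.
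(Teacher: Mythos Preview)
Your plan is essentially the paper's own strategy: a priori bounds confining $A_t$ to a compact set $\mathcal{A}\subset\mathcal{C}^\infty(\tore^\md)$, the key cancellation $\mu_\star^A\bigl(e^{-\beta A\circ\xi}\,\cdot\,\bigr)/\mu_\star^A\bigl(e^{-\beta A\circ\xi}\bigr)=\mu_\star$ making the limit independent of the bias, a Poisson-equation/It\^o decomposition with estimates uniform over $A\in\mathcal{A}$, and finally continuity of $\nu\mapsto -\beta^{-1}\log\int K(\cdot,\xi(x))\nu(dx)$ for the convergence of $A_t$. Two remarks on where your sketch and the paper diverge.

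First, the Poisson equation you wrote, $\mathcal{L}_A(Q_A\varphi)=\varphi-\mu^A(\varphi)$, does not match the quantity you need to control: the integrand is $e^{-\beta A_r\circ\xi(X_r)}\bigl(\varphi(X_r)-\mu_\star(\varphi)\bigr)$, not $\varphi(X_r)-\mu^{A_r}(\varphi)$. The paper (and the obvious fix) is to solve $\mathcal{L}_A\Psi(A,\cdot)=e^{-\beta A\circ\xi}\bigl(\varphi-\mu_\star(\varphi)\bigr)$, whose right-hand side has $\mu^A$-mean zero precisely because of the reweighting identity; It\^o's formula applied to $\Psi(A_t,X_t)$ then hits the weighted integrand directly. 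This is a small but necessary correction.

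Second, while you invoke the asymptotic-pseudo-trajectory machinery, the paper observes that because the limiting map $\Pi(\nu)=\mu_\star$ is \emph{constant}, the ODE $\dot\gamma=(1+s)^{-1}(\mu_\star-\gamma)$ is trivial and one can bypass the general theory entirely: the Poisson/It\^o decomposition already yields $\E\bigl|\overline{\mu}_t(\varphi)-\mu_\star(\varphi)\bigr|^2\le C(\varphi)/t$, and almost sure convergence follows from Borel--Cantelli along $t=e^{n\delta}$ plus the Lipschitz bound on $t\mapsto\overline{\mu}_{e^t}(\varphi)$. Your route is valid but heavier than needed; the direct $L^2$ estimate also delivers the quantitative rate used later for the asymptotic variance.
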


The strategy of proof of this result is described in our previous work~\cite{BB16}. The present article, provides the technical details in a more general context.

Let us explain the role that the weights play in this result. On the one hand, they are fundamental for the consistency of the method. This observation is not surprising, indeed it is a standard feature of importance sampling approaches. Indeed, Theorem~\ref{th:cv_simple} is easily seen to be valid for non-adaptive versions of~\eqref{eq:ABP_simple} (see~\eqref{eq:BP} and the convergence result~\eqref{eq:mu_t^A}), where a bias is initially given and not modified ($A_t=A$ for all $t\ge 0$ in the definitions of $X_t$ and $\overline{\mu}_t$ in~\eqref{eq:ABP_simple}). On the other hand, the convergence of $A_t$ to $A_\infty$ comes from the way the evolutions of $X_t$ and $\overline{\mu}_t$ are coupled, in the third equation of~\eqref{eq:ABP_simple}. The convergence of $A_t$ to $A_\infty$ reveals the efficiency of the method: indeed, $A_\infty$ is an approximation of the Free Energy function $A_\star$, defined by~\eqref{eq:FE_simple}. Note that, by construction, $\exp\bigl(-\beta A_\star(z)\bigr)dz$ is a probability distribution on $\tore^\md$, which is the image of $\mu_\star$ by the reaction coordinate $\xi$. As will be explained below (see Section~\ref{sec:free_energy}), biasing the dynamics (in a non-adaptive way) using the function $A_\star$ (which is not known in practice) is a natural choice. The adaptive method can thus naturally be seen as a stochastic approximation algorithm, with a parameter being learnt on-the-fly, see {\it e.g.}~\cite{B99},\cite{BMP90},\cite{D97},\cite{KY03}.

\subsection{General framework}\label{sec:Intro_general}

The observations made in Section~\ref{sec:intro_ABP}, concerning the system~\eqref{eq:ABP_simple} and the consistency result, Theorem~\ref{th:cv_simple}, can be generalized as follows.

First, contrary to~\eqref{eq:OL_intro}, the state space of the dynamics may not be compact. It may also be of infinite dimension.

The most important generalization concerns the type of diffusion processes which are considered: our general framework also encompasses the following examples (this list is not exhaustive): (hypoelliptic) Langevin dynamics with position and momenta variables, extended dynamics -- where an auxiliary variable is associated with the mapping $\xi$, see~\cite{LLSH16}) -- and Stochastic Partial Differential Equations (SPDEs) -- which are infinite dimensional diffusion processes. It may also be possible to study diffusions on smooth manifolds, however to simplify the presentation this situation is not treated.

Abstract notation and analysis allow us to treat simulataneously these examples in a general framework. However note that the SPDE example is studied separately, in Section~\ref{sec:SPDE} to simplify the exposition.

The reaction coordinate $\xi$ can be an arbitrary, smooth function, with values in a $\md$-dimensional compact manifold. The associated free energy is then defined in terms of a Radon-Nikodym derivative of the image of the invariant distribution $\mu_\star$ by $\xi$, with respect to a reference measure.

Note that the general framework and the associated abstract notation are constructed to emphasize the most important assumptions, made on the models and on the algorithm, which are required for the well-posedness and the consistency of the approach.

\subsection{Organization of the paper}\label{sec:Intro_orga}

In Section~\ref{sec:framework}, the abstract framework is introduced, with emphasis on the following objects: the diffusion process dynamics (Section~\ref{sec:dynamics}), the main examples (Section~\ref{sec:diffusions}), the invariant probability distribution $\mu$ (Section~\ref{sec:invariant}), and the Free Energy function (Section~\ref{sec:free_energy}).

The construction of generalized versions of the Adaptive Biasing Potential method, given by~\eqref{eq:ABP_simple}, is provided in Section~\ref{sec:ABP}. In particular, well-posedness results and important estimates are stated precisely there.

Section~\ref{sec:results} contains the main results of this article, in finite dimensional cases, concerning the long-time behavior of the method. On the one hand, the consistency of the approach, {\it i.e.} the almost sure convergence (in distribution) of $\overline{\mu}_t$ to $\mu$, is given in Theorem~\ref{th:cv_as_phi} and Corollary~\ref{cor:cv_as}. On the other hand, the efficiency is analyzed first in terms of the convergence of the approximation $A_t$ of the free energy function, Corollary~\ref{cor:cv_A}, and of occupation measures, Corollary~\ref{cor:conv_rho}; second, in terms of the asymptotic mean-square error, Proposition~\ref{propo:variance}.

Section~\ref{sec:proof_consistency} is devoted to the proof of the consistency. We first describe (see Section~\ref{sec:SA_perspective}) how to eliminate the weights in the definition of $\overline{\mu}_t$, thanks to a random change of time variable. As explained in~\cite{BB16}, the new system may then be treated using the ODE method from stochastic approximation (see~\cite{B99},~\cite{BMP90},~\cite{D97},~\cite{KY03}), thanks to an asymptotic time scale separation into slow (occupation measure) and fast (diffusion process) evolutions, like in~\cite{BLR02}. Section~\ref{sec:proof_decomp_error} then contains all the technical details for a direct proof of the consistency result. Auxiliary properties for solutions of Poisson equations are provided.

Section~\ref{sec:proof_variance} is devoted to the analysis of the asymptotic mean-square error. As expected, the behavior of the variance for the adaptive system is asymptotically the same as for a non-adaptive system where the bias is chosen as the limit $A_\infty$ of the adaptive bias $A_t$.

Finally, in Section~\ref{sec:SPDE} we consider infinite dimensional diffusion processes, which are solutions of SPDEs. This formally fits in the general framework, but a rigorous analysis needs to be performed separately.

\section{Framework}\label{sec:framework}

\subsection{Dynamics and abstract notation}\label{sec:dynamics}

To simplify notation, without loss of generality, from now on the parameter $\beta$ is set equal to $1$.

\subsubsection{Unbiased dynamics}

The unbiased (or original) dynamics is a diffusion process $\bigl(X_t^0\bigr)_{t\ge 0}$, with values on a state space denoted by $\St$. The process is solution of a Stochastic Differential Equation (SDE), when the dimension of $\St$ is finite; or of a Stochastic Partial Differential Equation (SPDE), when the dimension of $\St$ is infinite. The SDE or the SPDE is written in the following form
\begin{equation}\label{eq:dyn}
dX_t^0=\Dr(V)(X_t^0)dt+\sqrt{2}\Sigma dW_t \quad , \quad X_0^0=x_0,
\end{equation}
where $\bigl(W_t\bigr)_{t\in\R^+}$ is a standard Wiener process on $\St$, and $\Sigma$ is a linear mapping which is specified in each example below.

In~\eqref{eq:dyn}, the initial condition $x_0\in \St$ is arbitrary, and is assumed to be deterministic for simplicity. The convergence results may be extended to a random initial condition (independent of the Wiener noise) by a standard conditioning argument. The value of $x_0$ plays no role in the analysis below.

The drift coefficient $\Dr(V)$ in~\eqref{eq:dyn} depends on the potential energy function $V:E_d\to \R$, defined on a set $E_d$, where $E_d=\tore^d$ (periodic, compact case) or $E_d=\R^d$ (non compact case). Note that in general $\St\neq E_d$. The functions $V$ and $\Dr(V)$ are assumed of class $\mathcal{C}^\infty$, also the results can be adapted to deal with the situation when $V$ and $\Dr(V)$ are merely of class $\mathcal{C}^n$ for sufficiently large $n\in\N$. In the non compact case $E_d=\R^d$, growth conditions, that will be described for each example, are required.

\subsubsection{Non-adaptively biased dynamics}

Having introduced the unbiased dynamics~\eqref{eq:dyn}, we now describe the family of biased dynamics which we consider in this article, first in a non-adaptive context. The drift coefficient $\Dr(V)$ in~\eqref{eq:dyn} is modified, being replaced by $\Dr(V,A)$, where the function $A$ depends only on a small number of degrees of freedom of the system. In the current section, the bias is non-adaptive: the function $A$ is deterministic and does not depend on time.

\bigskip

We now make precise how $\Dr(V,A)$ is defined. It depends on the mapping $A\circ\xi$, where
\begin{itemize}
\item $\xi:E_d\to \Ma_\md$ is a fixed smooth function, where $\Ma_\md$ is a $\md$-dimensional smooth, compact, manifold, and $\md\in\left\{1,\ldots,d-1\right\}$.
\item $A:\Ma_\md\to \R$ is a smooth function.
\end{itemize}

The mapping $\xi$ is called the reaction coordinate (following the terminology from molecular dynamics applications) and the variables $z=\xi(x)$ are often called collective variables. The functions $\xi$ and $A\circ \xi$ are defined on $E_d$, like the potential energy function $V$.

Finally, an extension
\[
\xi_{\St}:\St\to \Ma_\md
\]
of the reaction coordinate $\xi$, is also defined on the state space $\St$, with a procedure depending on the example of diffusion process.

In the non-compact case $E_d=\R^d$, all the derivatives of $\xi$ are assumed to be bounded.

As explained in the introduction, in practice one chooses $\md$ much smaller than $d$, and typically for concrete applications $\md\in\left\{1,2,3\right\}$.

Note that the compactness assumption on $\Ma_\md$ is crucial in this article. In particular, it allows us to establish some stability estimates and the well-posedness of the ABP system. In some cases, it might be possible to remove this restriction (and consider for instance $\Ma_\md=\R^\md$), proving appropriate estimates. We leave this non trivial technical issue for future works. The assumption that $\Ma_\md$ is a smooth manifold is required to define potential energy functions $V-A\circ\xi$ with nice regularity properties.

To simplify the discussion, from now on $\Ma_\md=\tore^\md$ is the flat $\md$-dimensional torus. However, we use the abstract notation and conditions to suggest possible straightforward generalizations.

\bigskip

We are now in position to define the biased dynamics $\bigl(X_t^A\bigr)_{t\ge 0}$, for any given $A:\Ma_\md\to \R$ of class $\mathcal{C}^\infty$:
\begin{equation}\label{eq:BP}
dX_t^A=\Dr(V,A)(X_t^A)dt+\sqrt{2}\Sigma dW_t \quad , \quad X_0^A=x_0.
\end{equation}

Consistently, we will have $\Dr(V,0)=\Dr(V)$: in the absence of bias, the biased dynamics~\eqref{eq:BP} is simply the unbiased dynamics~\eqref{eq:dyn}.

\subsection{Examples of diffusions processes}\label{sec:diffusions}

In this section, we present the three main examples of diffusion processes to be studied. We postpone the study of a fourth example, given by infinite dimensional diffusion processes (SPDE), to Section~\ref{sec:SPDE}.

From now on, except in Section~\ref{sec:SPDE}, the state space $\St$ is finite dimensional.

\subsubsection{Brownian dynamics}\label{sec:OL}

\begin{itemize}
\item {\bf State space:} $\St=E_d$.
\item {\bf Reaction coordinate:} $\xi_\St=\xi$.
\item {\bf Drift coefficient:} $\Dr(V,A)=\Dr(V-A\circ \xi)=-\nabla\bigl(V-A\circ \xi\bigr)$. {\bf Diffusion operator:} $\Sigma=I$, where $I$ denotes the identity matrix.
\end{itemize}

In the Brownian case, the dynamics~\eqref{eq:BP} is written as
\[
dx_t^A=-\nabla \bigl(V-A\circ \xi\bigr)(x_t^A)dt+\sqrt{2}dW_t.
\]

In the non compact case, $E_d=\R^d$, the potential energy function $V$ is assumed to satisfy the conditions below.
\begin{hyp}\label{ass:V_OL}
When $E_d=\R^d$, there exist $\alpha_V\in(0,\infty)$ and $C_V\in\R$, such that for all $x\in E_d$,
\begin{equation*}
\langle x,\nabla V(x)\rangle\ge \alpha_V|x|^2-C_V.
\end{equation*}
Moreover, $V$ is semi-convex: $V=V_1+V_2$ where $V_1$ is a smooth bounded function, with bounded derivatives, and $V_2$ is a smooth convex function.

For all $k\in\N$, $\int_{\R^d}|x|^{k}e^{-V(x)}dx<\infty$.

Finally, there exists $c_V\in(0,\infty)$, $\varpi\in\N$, such that for all $x\in\R^d$,
\[
|V(x)|\le c_V(1+|x|^\varpi).
\]
\end{hyp}

Assumption~\ref{ass:V_OL} is satisfied for instance for smooth potential functions $V$ which behave like $|\cdot|^\varpi$ at infinity.

\begin{rem}
Assume $E_d=\R^d$. The Brownian dynamics defined above is reversible, but the framework also encompasses non-reversible situations. For instance, let $J\neq 0$ be a $d\times d$ skew-symmetric matrix. Then one may also define the drift coefficient as $\Dr(V,A)=-(I+J)\nabla\bigl(V-A\circ\xi\bigr)$.
\end{rem}

\subsubsection{Langevin dynamics}\label{sec:L}

\begin{itemize}
\item {\bf State space:} $\St=E_d\times \R^d$ (which is not compact). Elements of $\St$ are denoted by $(q,p)$.
\item {\bf Reaction coordinate:} $\xi_\St(q,p)=\xi(q)$.
\item {\bf Drift coefficient:} $\Dr(V,A)(q,p)=\begin{pmatrix}p \\ -\nabla\bigl(V-A\circ\xi\bigr)(q) -\gamma p\end{pmatrix}$. {\bf Diffusion operator:} $\Sigma=\sqrt{\gamma}\begin{pmatrix} 0 & 0\\ 0 & I\end{pmatrix}$, for $\gamma\in(0,\infty)$ a damping parameter.
\end{itemize}

\bigskip

In the Langevin case, the dynamics~\eqref{eq:BP} is written
\[
\begin{cases}
dq_t^A=p_t^Adt \quad,\quad q_0^A=q_0,\\
dp_t^A=-\nabla\bigl(V-A\circ\xi\bigr)(q_t^A)dt-\gamma p_t^Adt+\sqrt{2\gamma}d\tilde{W}_t \quad,\quad p_0^A=p_0,
\end{cases}
\]
where $\bigl(\tilde{W}_t\bigr)_{t\ge 0}$ is a standard Wiener process on $\R^d$. In applications, the variable $q$ represents positions of particles, whereas the variable $p$ represents their momenta.

The value of the damping parameter $\gamma$ plays no role in the analysis below. We recall that in the limit $\gamma\to \infty$, one recovers (up to a rescaling of the time variable) the Brownian dynamics of Section~\ref{sec:OL}, which is thus often referred to as the overdamped Langevin dynamics. Recall also the analysis of these two cases is different, since the Langevin diffusion is hypoelliptic, whereas the Brownian dynamics is elliptic.

\bigskip

In the non-compact case, $E_d=\R^d$, the potential energy function $V$ is assumed to satisfy the conditions below.
\begin{hyp}\label{ass:V_L}
When $E_d=\R^d$, there exists $\kappa_V\in(0,\infty)$ and $V_{-}\in\R$ such that $V(q)\ge \kappa_V|q|^2+V_{-}$ for all $q\in \R_d$.

There exist $A_V,B_V,\kappa_V\in(0,\infty)$ and $C_V\in\R$, such that for all $q\in \R^d$,
\begin{equation*}
\langle q,\nabla V(q)\rangle \ge A_VV(q)+B_V|q|^2+C_V.
\end{equation*}

Moreover, $V$ is semi-convex: $V=V_1+V_2$ where $V_1$ is a smooth bounded function, with bounded derivatives, and $V_2$ is a smooth convex function.

Finally, there exists $c_V\in(0,\infty)$, $\varpi\in\N$, such that for all $q\in \R^d$,
\[
|V(q)|\le c_V(1+|q|^\varpi).
\]
\end{hyp}

\subsubsection{Extended dynamics}\label{sec:E}

This example is a modification of the Brownian dynamics from Section~\ref{sec:OL}. It is straightforward to build a similar modification of the Langevin dynamics of Section~\ref{sec:L}, the details are left to the reader.

\begin{itemize}
\item {\bf State space:} $\St=E_d\times \Ma_\md$. Elements of $\St$ are denoted by $(x,z)$.
\item {\bf Reaction coordinate:} $\xi_\St(x,z)=z$.
\item {\bf Drift coefficient:} $\Dr(V,A)(x,z)=\begin{pmatrix}-\nabla_x U_A(x,z)\\ -\nabla_z U_A(x,z)\end{pmatrix}$ where $U_A(x,z)=U(x,z)-A(z)$, $U(x,z)=V(x)+\frac{1}{2\epsilon}V_{\rm ext}\bigl(\xi(x),z\bigr)$ is the extended potential energy function. It depends on a smooth function $V_{\rm ext}:\Ma_\md\times \Ma_\md\to \R$, and on $\epsilon\in(0,\infty)$. {\bf Diffusion operator:} $\Sigma$ is the identity.
\end{itemize}

In the case $\Ma_{\md}=\tore^\md$ considered here, one may choose $V_{\rm ext}\bigl(\xi(x),z\bigr)=\bigl(\xi(x)-z\bigr)^2$. Then, in the (Brownian) extended case, the dynamics~\eqref{eq:BP} is written as
\[
\begin{cases}
dX_t^A=-\nabla V(X_t^A)dt-\frac{1}{\epsilon}\langle \nabla\xi(X_t^A),\xi(X_t^A)-Z_t^A\rangle dt+\sqrt{2}dW_t^x \quad,\quad X_0^A=x_0,\\
dZ_t^A=-\frac{1}{\epsilon}\bigl(Z_t^A-\xi(X_t^A)\bigr)dt+\nabla A(Z_t^A)dt+\sqrt{2}dW_t^z \quad,\quad Z_0^A=z_0,
\end{cases}
\]
for some arbitrary initial condition $z_0\in\Ma_\md$, and where $\bigl(W_t^x\bigr)_{t\ge 0}$ and $\bigl(W_t^z\bigr)_{t\ge 0}$ are independent standard Wiener processes, on $E_d$ and on $\Ma_\md$ respectively. The dynamics is thus obtained by considering the Brownian dynamics on $E_d\times \Ma_\md$, with potential energy function $U$.

As a consequence, one could directly write the extended dynamics in the framework of Section~\ref{sec:OL}. Our choice emphasizes the role of the extended dynamics as an algorithmic tool, which is available for the practitioners.

\bigskip

We will explain below why the extended dynamics is relevant, in the limit $\epsilon\to 0$, for the problem of sampling the initial distribution, and thus why it may be sufficient to deal with the extended dynamics case. Then, since $\xi_\St(x,z)=z$ in this example, it would not be restrictive to consider reaction coordinates of the form $\xi(x_1,\ldots,x_d)=(x_1,\ldots,x_\md)$.

\bigskip

In the non compact case, $E_d=\R^d$, it is assumed that $V$ satisfies Assumption~\ref{ass:V_OL} (or Assumption~\ref{ass:V_L} if one starts from the Langevin dynamics). Then the extended potential energy function $U$ also satisfies a similar condition (recall that $\Ma_\md$ is compact) on the extended state space $E_d\times \Ma_\md$.

\subsection{Invariant probability distributions of the diffusion processes}\label{sec:invariant}

In all the examples presented above in Section~\ref{sec:diffusions}, the diffusion processes, $\bigl(X_t^0\bigr)_{t\ge 0}$ and $\bigl(X_t^A\bigr)_{t\ge 0}$, given by~\eqref{eq:dyn} and~\eqref{eq:BP}, are ergodic. The associated unique invariant distributions, defined on $\St$ (equipped with the Borel $\sigma$-field), are denoted by $\mu_\star^0$ and $\mu_\star^A$. Since the notation is consistent when $A=0$, we only deal with $\mu_\star^A$, with arbitrary $A$, in the remainder of this section.

The ergodicity in our context is understood in the following sense:
\begin{itemize}
\item there exists a unique invariant probability distribution for the Markov process $X^A$ defined by~\eqref{eq:BP}, which is equal to $\mu_\star^A$;
\item for any initial condition $x_0\in \St$, almost surely,
\[
\frac{1}{t}\int_0^t\delta_{X_\tau^A}d\tau\underset{t\to \infty}\implies \mu_\star^A,
\]
where the notation $\implies$ stands for the convergence of probability distributions on $\St$. Recall that, if $\bigl(\mu_n\bigr)_{n\in\N}$ and $\mu$ are probability distributions on $\St$, then
\[
\mu_n\underset{n\to\infty}\implies \mu
\]
if $\mu_n(\varphi)\underset{n\to \infty}\to\mu(\varphi)$ for every bounded continuous function $\varphi:\St\to \R$.
\end{itemize}

The invariant distribution $\mu_\star^A$ is expressed explictly in terms of the following data:
\begin{itemize}
\item a reference Borel, $\sigma$-finite, measure $\nuSt$ on $\St$, which does not depend on $V$ and $A$;
\item a Total Energy function $\En(V,A):\St\to \R$.
\end{itemize}

The expression of $\mu_\star^A$ is then given by:
\begin{equation}\label{eq:muA}
\mu_\star^A(dx)=\frac{\exp\bigl(-\En(V,A)(x)\bigr)}{Z^A}\nuSt(dx),
\end{equation}
where $Z^A=\int_\St \exp\bigl(-\En(V,A)(x)\bigr)\nuSt(dx)$ is a normalizing constant.

Computing averages $\mu_\star^A(\varphi)=\int_\St \varphi d\mu_\star^A$, for instance with $A=0$, is typically a challenging computational task. This may be due to the large dimension of the state space, or to the multimodality of the measure. Importance sampling techniques, as considered in this article, consist in proposing choices of functions $A$ such that it is cheaper to sample $\mu_\star^A$ than $\mu_\star$.

\bigskip

Let us make precise the reference measure $\nuSt$ and the mapping $(V,A)\mapsto \En(V,A)$ for the diffusion processes of Section~\ref{sec:diffusions}. First, the total energy function satisfies the identity
\begin{equation}\label{eq:En}
\En(V,A)=\En(V,0)-A\circ\xi_\St.
\end{equation}

It thus remains to specify the mapping $\En(V)=\En(V,0)$ and the measure $\lambda$ for each example.
\begin{itemize}
\item \underline{Brownian dynamics.} The reference measure $\lambda$ is the Lebesgue measure on $\St$. The total energy function is $\En(V)=V$.
\item \underline{Langevin dynamics.} The reference measure $\lambda$ is the Lebesgue measure on $\St$. The total energy function is the Hamiltonian function, $\En(V)(q,p)=H(q,p)=V(q)+\frac{|p|^2}{2}$. The total energy is thus the sum of potential and kinetic energies.
\item \underline{Extended dynamics.} The reference measure $\lambda$ is the Lebesgue measure on $\St=E_d\times\Ma_\md$. The total energy function is $\En(V)(x,z)=U(x,z)=V(x)+\frac{1}{2\epsilon}\bigl(\xi(x)-z\bigr)^2$.

\end{itemize}

\bigskip

The ergodicity of the dynamics~\eqref{eq:dyn}, resp.~\eqref{eq:BP}, with unique invariant probability distribution $\mu_\star^0$, resp.~$\mu_\star^A$, is well-known. We refer to Appendix~\ref{sec:appendix_finie}. In addition (see Proposition~\ref{propo:AuxiliaryPoisson}), the convergence is exponentially fast, with a rate which can be controlled, uniformly with respect to $A$, thanks to the following auxiliary result.
\begin{prop}\label{prop:unif}
Let $m,M,M^{(1)},M^{(2)},\ldots\in \R$ denote real numbers, and
\[
\mathcal{A}\subset\left\{A\in \mathcal{C}^\infty(\Ma_\md,\R) ~;~ \min A\ge m, \max A\le M, \max |\partial^k A|\le M^{(k)}, \forall k\ge 1 \right\},
\]
where $\partial^k$ denotes the derivative of order $k$.

Then
\begin{itemize}
\item if $V$ satisfies Assumption~\ref{ass:V_OL}, there exists $\alpha_{V,\mathcal{A}}\in(0,\infty)$ and $C_{V,\mathcal{A}}\in(0,\infty)$ such that for every $A\in\mathcal{A}$ and every $x\in E_d=\R^d$,
\begin{equation*}
\langle x,\nabla \bigl(V-A\circ\xi\bigr)(x)\rangle\ge \alpha_{V,\mathcal{A}}|x|^2-C_{V,\mathcal{A}}.
\end{equation*}
\item if $V$ satisfies Assumption~\ref{ass:V_L}, there exists $A_{V,\mathcal{A}},B_{V,\mathcal{A}}\in(0,\infty)$ and $C_{V,\mathcal{A}}\in(0,\infty)$ such that for every $A\in\mathcal{A}$ and every $q\in E_d=\R^d$,
\begin{equation*}
\langle q,\nabla \bigl(V-A\circ\xi\bigr)(q)\rangle \ge A_{V,\mathcal{A}}V(q)+B_{V,\mathcal{A}}|q|^2+C_{V,\mathcal{A}}.
\end{equation*}
\item For every $k\ge 1$,
\begin{equation}
\underset{A\in\mathcal{A}}\sup\int_{\St}|x|^k\mu_{\star}^{A}(dx)<\infty.
\end{equation}
\end{itemize}
\end{prop}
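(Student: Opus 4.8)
The plan is to reduce the three assertions to elementary bookkeeping, uniform in $A\in\mathcal{A}$. The common ingredient is a uniform bound on the perturbation: since (in the non-compact case $E_d=\R^d$) all derivatives of $\xi$ are assumed bounded and every $A\in\mathcal{A}$ satisfies $\max|\partial A|\le M^{(1)}$, the chain rule gives $|\nabla(A\circ\xi)(x)|\le L$ for all $x\in E_d$ and all $A\in\mathcal{A}$, where $L$ depends only on $M^{(1)}$ and $\sup_x\|\nabla\xi(x)\|$. In the compact case $E_d=\tore^d$ the first two bullets are vacuous and the third is immediate (for Brownian and extended dynamics $\St$ is then compact; for Langevin one still needs the momentum estimate of the third step, but now with $V$ bounded), so from now on I take $E_d=\R^d$.

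For the first bullet I would write
\[
\langle x,\nabla(V-A\circ\xi)(x)\rangle=\langle x,\nabla V(x)\rangle-\langle x,\nabla(A\circ\xi)(x)\rangle\ge\langle x,\nabla V(x)\rangle-L|x|,
\]
insert the coercivity bound of Assumption~\ref{ass:V_OL}, and absorb the linear term with Young's inequality $L|x|\le\frac{\alpha_V}{2}|x|^2+\frac{L^2}{2\alpha_V}$; this yields the claim with $\alpha_{V,\mathcal{A}}=\frac{\alpha_V}{2}$ and $C_{V,\mathcal{A}}=C_V+\frac{L^2}{2\alpha_V}$, both independent of $A\in\mathcal{A}$. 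For the second bullet the argument is identical: starting from $\langle q,\nabla(V-A\circ\xi)(q)\rangle\ge\langle q,\nabla V(q)\rangle-L|q|$, I insert $\langle q,\nabla V(q)\rangle\ge A_VV(q)+B_V|q|^2+C_V$ from Assumption~\ref{ass:V_L} and use $L|q|\le\frac{B_V}{2}|q|^2+\frac{L^2}{2B_V}$ to absorb the cross term into the $|q|^2$-term; relabelling the constants (and, if needed, redistributing part of $A_VV(q)$ via the lower bound $V(q)\ge\kappa_V|q|^2+V_-$) gives the stated inequality with constants depending only on $M^{(1)}$ and the data of Assumption~\ref{ass:V_L}.

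For the third bullet, which is in fact independent of the first two, I would use only the explicit form of the invariant measure. Combining~\eqref{eq:muA} and~\eqref{eq:En},
\[
\mu_\star^A(dx)=\frac{e^{A\circ\xi_\St(x)}\,e^{-\En(V)(x)}}{Z^A}\,\lambda(dx),\qquad Z^A=\int_\St e^{A\circ\xi_\St}\,e^{-\En(V)}\,d\lambda,
\]
and since $m\le A\le M$ on $\Ma_\md$, one has $e^m\le e^{A\circ\xi_\St}\le e^M$ pointwise, hence $Z^A\ge e^mZ^0$ while the density is at most $e^{M}e^{-\En(V)}/Z^A$. Therefore $\int_\St|x|^k\,\mu_\star^A(dx)\le e^{M-m}\int_\St|x|^k\,\mu_\star^0(dx)$ for all $A\in\mathcal{A}$ and all $k\ge1$, so it suffices to check the moments of the unbiased invariant measure in each example. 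This is exactly $\int_{\R^d}|x|^ke^{-V(x)}dx<\infty$ from Assumption~\ref{ass:V_OL} in the Brownian case; in the Langevin case $\mu_\star^0(dq,dp)\propto e^{-V(q)}e^{-|p|^2/2}dq\,dp$, whose $p$-marginal is Gaussian and whose $q$-marginal has Gaussian tails by $V(q)\ge\kappa_V|q|^2+V_-$; in the extended case $\Ma_\md$ is compact and $\En(V)(x,z)=V(x)+\frac{1}{2\epsilon}(\xi(x)-z)^2\ge V(x)$, so finiteness again follows from Assumption~\ref{ass:V_OL}.

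I do not expect a genuine obstacle here: the statement is a uniform-in-$A$ Lyapunov-type estimate, and the only point requiring a little care is that the energy function and the state space have a different structure in the three examples — in particular, for the hypoelliptic Langevin dynamics the $k$-th moment in the last bullet involves the momentum variable, which is harmless thanks to its Gaussian marginal — so the three cases should each be inspected as above.
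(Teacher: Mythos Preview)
Your proof is correct. The paper states this Property without proof, treating it as an elementary auxiliary result; your argument---uniform bound $|\nabla(A\circ\xi)|\le L$ from the chain rule, absorption of the linear cross term via Young's inequality for the first two bullets, and the pointwise sandwich $e^m\le e^{A\circ\xi_\St}\le e^M$ on the explicit density~\eqref{eq:muA} for the moment bound---is exactly the elementary bookkeeping the paper implicitly relies on.
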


\bigskip

The distribution of interest, in practice, is $\mu_\star=\mu_\star^0$. However, sampling the process $X^A$ provides an approximation of $\mu_\star^A$. The following expression provides a way to compute an average $\mu_\star(\varphi)=\int_{\St}\varphi(x)\mu_\star(dx)$ in terms of averages with respect to $\mu_\star^A$: for bounded and continuous functions $\varphi:\St\to \R$,
\begin{equation}\label{eq:mu_muA}
\mu_{\star}(\ph)=\frac{\mu_{\star}^{A}\bigl(\ph \exp(-A\circ\xi_\St)\bigr)}{\mu_{\star}^{A}\bigl(\exp(-A\circ\xi_\St)\bigr)}.
\end{equation}
Using~\eqref{eq:En}, averages with respect to $\mu_\star$ may therefore be approximated by temporal averages along the biased dynamics~\eqref{eq:BP}: indeed, the quantity $\overline{\mu}_t^A$ defined by the left-hand side below satisfies
\begin{equation}\label{eq:mu_t^A}
\overline{\mu}_t^{A}(\ph):=\frac{1+\int_{0}^{t}e^{-A(\xi_\St(X_{r}^{A}))}\ph(X_r^A)dr}{1+\int_{0}^{t}e^{-A(\xi_\St(X_{r}^{A}))}dr}\underset{t\to +\infty}\to
\frac{\mu_{\star}^{A}\bigl(\ph \exp(-A\circ\xi_\St)\bigr)}{\mu_{\star}^{A}\bigl(\exp(-A\circ\xi_\St)\bigr)}=\mu_{\star}(\ph).
\end{equation}
This expression serves as the guideline for the construction of the Adaptive Biasing Potential methods~\eqref{eq:ABP_simple}, and~\eqref{eq:ABP_full} in the general case: the empirical distributions $\overline{\mu}_t$ are weighted, to ensure consistency. For well chosen functions $A$, the convergence is expected to be faster than when $A=0$. In Section~\ref{sec:free_energy} below, we identify such a function $A$, the so-called Free Energy function.

\subsection{The Free Energy function}\label{sec:free_energy}

In this section, we introduce one of the key quantities in our study: the Free Energy function $A_\star:\Ma_\md\to \R$. We explain why this function is a quantity of interest for the computational problem we are interested in, and why it is expected than choosing $A=A_\star$ in the biased dynamics~\eqref{eq:BP} leads to efficient sampling. This property is indeed the guideline of the Adaptive Biasing Potential approach of this article: we construct an adaptive version which is both consistent and designed such that $A_t$ converges to an approximation $A_\infty$ of $A_\star$ when $t\to \infty$.

\bigskip

The definition of the Free Energy function depends on the choice of a reference probability distribution $\pi$ on $\Ma_\md$.  In this article, since $\Ma_\md=\tore^\md$, it is natural to choose the Lebesgue measure, but abstract notation suggests other possible choices, see Remark~\ref{rem:pi} below.

For every smooth $A:\Ma_\md\to \R$, let $\pi_\star^A$ denote the image by $\xi_\St:\St\to\Ma_\md$ of the probability distribution $\mu_\star^A$ on $\St$. Recall that this means that for any bounded, continuous function $\phi:\Ma_\md\to \R$,
\[
\int_{\Ma_\md}\phi(z)\pi_\star^A(dz)
=\int_{\St}\phi\bigl(\xi_\St(x)\bigr)\mu_\star^A(dx).
\]
The following assumption is required.
\begin{hyp}\label{ass:pi}
The measures $\pi_\star^0$ and $\pi$ are equivalent: $\pi_\star^0$ (resp. $\pi$) is absolutely continuous with respect to $\pi$ (resp. $\pi_\star^0$).
\end{hyp}
When Assumption~\ref{ass:pi} holds true, then $\pi_\star^A$ is equivalent to $\pi$, for all smooth functions $A:\Ma_\md\to \R$. Thanks to the smoothness conditions on $\xi$, and to growth conditions on $V$, Assumption~\ref{ass:pi} is satisfied in all the examples presented above, when $\pi$ is the Lebesgue measure on $\Ma_\md=\tore^\md$.

\begin{rem}\label{rem:pi}
Another natural choice, in the periodic case $E_d=\tore^d$, for finite dimensional dynamics, is as follows: $\pi$ is defined as the image by $\xi:E_d\to \Ma_\md$ of the Lebesgue measure on $\E_d$. With this definition, $\pi$ depends on $\xi$. In the non-compact case, for instance one may define (for instance) $\pi$ as the image by $\xi$ of the standard Gaussian distribution on $\R^d$.

With these examples, Assumption~\ref{ass:pi} is satisfied by construction of $\pi$.
\end{rem}

\bigskip

We are now in position to define the free energy function $A_\star$.
\begin{defi}\label{def:FE}
The Free Energy function $A_\star:\Ma_\md\to \R$ is defined by the following property: $\exp\bigl(-A_\star(\cdot)\bigr)$ is the Radon-Nikodym derivative of $\pi_\star^0$ with respect to $\pi$.

This means that for every bounded measurable function $\phi:\Ma_\md\to \R$,
\[
\int_{\Ma_\md}\phi(z)e^{-A_\star(z)}\pi(dz)=\int_{\St}\phi\bigl(\xi_\St(x)\bigr)\mu_\star(dx).
\]
\end{defi}
Observe that, thanks to Assumption~\ref{ass:pi}, $A_\star$ takes values in $(-\infty,\infty)$. Moreover, $e^{-A_\star(z)}\pi(dz)$ is by construction a probability distribution on $\Ma_\md$, thus no normalizing constant appears on the left-hand side.

It is then straightforward to check that the Radon-Nikodym derivative of $\pi_\star^A$ with respect to $\pi$ is equal to $\exp\bigl(-A_\star+A\bigr)$, thanks to the condition~\eqref{eq:En}.

\bigskip

The function $A_\star$ may be interpreted as an effective potential energy function, for the unbiased dynamics, depending on the variable $z=\xi_\St(x)$ only. Indeed, note that for any sufficiently smooth, bounded, function $\phi:\Ma_\md\to \R$, by ergodicity of the unbiased dynamics~\eqref{eq:dyn}, with respect to $\mu_\star$, almost surely
\[
\frac{1}{t}\int_0^t\phi\bigl(\xi_\St(X_r^0)\bigr)dr\underset{t\to \infty}\to \int_\St \phi\circ\xi_\St d\mu_\star^0=\int_{\Ma_\md}\phi d\pi_\star^0=\int_{\Ma_\md}\phi(z)e^{-A_\star(z)}d\pi(z).
\]

Similarly, when considering the biased dynamics,
\[
\frac{1}{t}\int_0^t\phi\bigl(\xi_\St(X_r^A)\bigr)dr\underset{t\to \infty}\to \int_{\Ma_\md}\phi(z)e^{-A_\star(z)+A(z)}d\pi(z).
\]

\bigskip

We now give an interpretation of the qualitative properties of the free energy function $A_\star$. Assume that $\pi$ is the Lebesgue measure on $\Ma_\md$, and that $A_\star$ admits several local minima: then the distribution $\pi_\star^0$ is multimodal, and the convergence to equilibrium, when using the unbiased dynamics, is slow. Indeed, the process must visit regions near all the local minima of $A_\star$, and transitions between these metastable states are rare events. Thus $A_\star$ encodes the metastability of the dynamics along the variable $z=\xi(z)\in \Ma_\md$.

On the contrary, if the biased dynamics with $A=A_\star$ is used, the associated ergodicity result indicates that convergence is expected to be faster -- at least if the convergence in the other variables is not slow due to metastability. Indeed, the repartition of the values of $\xi_\St(X_t^A)$ tends to be uniform when $t\to \infty$; this is the flat-histogram property which is the guideline of the strategies mentioned in Section~\ref{sec:intro}.

Note also that, in many applications (for instance in molecular dynamics), computing free energy differences, {\it i.e.} $A_\star(z_1)-A_\star(z_2)$, may be the ultimate goal of the simulation, instead of computing averages $\int\varphi d\mu_\star$. The Adaptive Biasing Potential methods of this article can also be seen as efficient Free Energy computation algorithms.

Since in general the free energy function is not known, the associated biased dynamics with $A=A_\star$ cannot be simulated in practice; the guideline of the adaptive version proposed and analyzed below is to (approximately) reproduce the nice flat-histogram property for variable $z=\xi(x)$ in the asymptotic regime $t\to \infty$, without a priori knowing the free energy function $A_\star$; moreover an estimation $A_\star$ is also computed.

\section{ABP: construction and well-posedness}\label{sec:ABP}

In this section, the construction of the Adaptive Biasing Potential (ABP) system is performed in the general framework of Section~\ref{sec:framework}. The rigorous construction of the process, and the statement of appropriate assumptions, is one of the contributions of this paper. The ABP system is built starting from the unbiased dynamics~\eqref{eq:dyn}, with an adaptive bias $A=A_t$ (random and depending on time $t$) introduced in the biased dynamics~\eqref{eq:BP}. The construction is a generalization of~\eqref{eq:ABP_simple}, considered in Section~\ref{sec:intro_ABP} in a simplified setting.

In an abstract framework, the coupling of the evolutions of the diffusion process $X_t$ and of the bias $A_t$ requires the introduction of several auxiliary tools, with details provided below.
\begin{itemize}
\item A kernel function $K:\Ma_\md\times \Ma_\md\to (0,\infty)$, see Assumption~\ref{ass:kernel}. Then, a mapping $\K:~\mathcal{P}(\St)\to \mathcal{C}^\infty(\Ma_\md)$ is defined by
\begin{equation}\label{eq:K}
\K(\overline{\mu})(z)=\int_{\St}K\bigl(z,\xi_\St(x)\bigr)\overline{\mu}(dx).
\end{equation}
\item A normalization operator $\No:\mathcal{C}^0(\Ma_\md,(0,\infty))\to \mathcal{C}^0(\Ma_\md,(0,\infty))$, on the set of continuous functions on $\Ma_\md$ with values in $(0,\infty)$.
\item The notation $\overline{F}$ is defined by
\[
\overline{F}(z)=\frac{F(z)}{\int_{\Ma_\md} F d\pi}.
\]
\end{itemize}

The ABP system in its general formulation is written as follows:
\begin{equation}\label{eq:ABP_full}
\begin{cases}
dX_t=\Dr\bigl(V,A_t\bigr)(X_t)dt+\sqrt{2}\Sigma dW_t,\\
\overline{\mu}_{t}=
\frac{\overline{\mu}_0+\int_{0}^{t}F_\tau(\xi_\St(X_{\tau}))\delta_{X_\tau}d\tau}{1+\int_{0}^{t}F_\tau(\xi_\St(X_{\tau}))d\tau},\\
F_t=\No\bigl(\K(\overline{\mu}_t)\bigr)=\No\bigl(\int_{\St}K\bigl(\cdot,\xi_\St(x)\bigr)\overline{\mu}_t(dx)\bigr),\\
A_t=-\log\bigl(\overline{F}_t\bigr),
\end{cases}
\end{equation}
where there are four unknown processes: $\bigl(X_t\bigr)_{t\ge 0}$ (with values in $\St$), $\bigl(\overline{\mu}_t\bigr)_{t\ge 0}$ (with values in $\mathcal{P}(\St)$ the set of probability distributions on $\St$), $\bigl(F_t\bigr)_{t\ge 0}$ (with values in $\mathcal{C}^0(\Ma_\md,(0,\infty))$), and $\bigl(A_t\bigr)_{t\ge 0}$ (with values in $\mathcal{C}^\infty(\Ma_\md)$). Note that the initial conditions $F_0=\No\bigl(\K(\overline{\mu}_0)\bigr)$ and $A_0=-\log(\overline{F}_0)$ are prescribed by the initial condition $\overline{\mu}_0$; we also set $ X_0=x_0$.

\bigskip

Observe that it is not necessary to consider the four unknowns in~\eqref{eq:ABP_full}. Indeed, as will be explained below, $F_t$ and $\overline{F}_t=\exp(-A_t)$ only differ by a multiplicative constant (depending on $t$), which is determined only by the choice of the normalization operator $\No$. Moreover, it would be possible to consider only the processes $\bigl(X_t\bigr)_{t\ge 0}$ and $\bigl(A_t\bigr)_{t\ge 0}$ to define the dynamics of the ABP system; however, we wish to emphasize the role of the probability distribution $\overline{\mu}_t$, this is why it is included explicitly in~\eqref{eq:ABP_full}.

\bigskip

Important observations concerning the system~\eqref{eq:ABP_full} are in order.

The diffusion process is biased, following~\eqref{eq:BP}, and the bias $A_t$ at time $t$ is defined in terms of the values $\bigl(X_r\bigr)_{0\le r\le t}$ of the diffusion process up to time $t$. As a consequence, the diffusion process in~\eqref{eq:ABP_full} can be considered as a self-interacting diffusion on $\St$. However, the standard framework of self-interacting processes does not encompass the system~\eqref{eq:ABP_full}, and we thus need to adapt and generalize the arguments concerning well-posedness and convergence in our setting.

The function $A_t$ is constructed in order to be an approximation, in the regime $t\to \infty$, of the Free Energy function $A_\star$, introduced in Section~\ref{sec:free_energy}; indeed, knowing $A_\star$ would lead to an optimal non-adaptive biased dynamics. The adaptive system is designed to approximate both adaptively and efficiently $A_\star$.

As already mentioned in the introduction (see Theorem~\ref{th:cv_simple}), the central object in the analysis is the probability distribution $\overline{\mu}_t$. Indeed, we will prove that it converges almost surely to $\mu_\star$, see Theorem~\ref{th:cv_as_phi}. Note that $\overline{\mu}_t$ is defined as a weighted empirical distribution, with weights $F_\tau\bigl(\xi_\St(X_\tau)\bigr)$; this choice is motivated by~\eqref{eq:mu_muA} (in the non-adaptive setting).

Below, we state assumptions on the kernel and on the normalization operator, which play a key role first for the well-posedness of the algorithms, second for the analysis of its asymptotic behavior. In the sequel, the Assumptions on the model, stated in Section~\ref{sec:framework}, are satisfied.

\subsection{Kernel}

The kernel function $K:(z,\zeta)\in \Ma_{\md}\times \Ma_{\md}\mapsto K(z,\zeta)\in (0,+\infty)$ is a continuous, positive, smooth function. In the following, this function is often referred to as the regularization kernel, and it is assumed to satisfy the conditions below.
\begin{hyp}\label{ass:kernel}
The function $K$ is positive, of class $\mathcal{C}^\infty$ on $\Ma_\md\times\Ma_\md$. Moreover, for all $\zeta\in \Ma_\md$, the normalization condition $\int_{\Ma_\md}K(z,\zeta)\pi(dz)=1$ is satisfied.
\end{hyp}

Since $\Ma_\md$ is compact, one has $m(K)=\min_{z,\zeta \in \Ma_{\md}} K(z,\zeta)>0$, and, for all integers $r\in\left\{0,1,\ldots\right\}$, $M^{(r)}(K)=\sup_{z,\zeta \in \Ma_{\md}} |\partial_{z}^{r}K(z,\zeta)|<+\infty$. Moreover, $\sup_{z\in\Ma_\md}\sup_{\zeta_1,\zeta_2}\frac{K(z,\zeta_1)-K(z,\zeta_2)}{d(\zeta_1,\zeta_2)}<+\infty$ (Lipschitz continuity in the second variable, uniformly in the first variable).

\bigskip

The mapping $\K:\overline{\mu}\in\mathcal{P}(\St)\mapsto \K(\overline{\mu})\in \mathcal{C}^\infty(\Ma_\md)$, is then defined by~\eqref{eq:K} above. Note that $\int_\St \K(\overline{\mu})(z)\pi(dz)=1$, and that the mapping $\K(\overline{\mu})$ is of class $\mathcal{C}^\infty$, thanks to Assumption~\ref{ass:kernel}. Note also that~\eqref{eq:K} also makes sense if the probability distribution $\overline{\mu}$ is replaced with a positive, finite, measure $\mu$.

\bigskip

One may consider the following example of kernel $K$, in the case $\Ma_\md=\tore^\md$. Let $k:\R^\md\to (0,\infty)$ be an even function of class $\mathcal{C}^\infty$, with bounded derivatives, such that $\int_{\Ma_\md}k(z)\pi(dz)=1$. For $\epsilon\in(0,1)$, let $K(z,\zeta)=\frac{1}{\epsilon}k\bigl(\frac{z-\zeta}{\epsilon}\bigr)$. In the regime $\epsilon\to 0$, such kernels $K=K_\epsilon$ are smooth mollifiers. If the function $k$ is chosen with compact support, the positivity condition on $K$ is satisfied by choosing $K(z,\zeta)=\frac{\alpha}{\epsilon}k\bigl(\frac{z-\zeta}{\epsilon}\bigr)+1-\alpha$, with $\alpha\in(0,1)$.

It may also be useful to consider kernel functions which are not homogeneous, {\it i.e.} $K(z,\zeta)$ does not depend only on $z-\zeta$. For instance, set $K(z,\zeta)=\sum_{n=1}^{N}K_n(z,\zeta)\theta_n(\zeta)$, where $N\in\N$, $K_1,\ldots,K_N$ are kernel functions satisfying Assumption~\ref{ass:kernel}, and $\theta_1,\ldots,\theta_N$ are smooth functions $\Ma_\md\to (0,\infty)$, such that $\sum_{n=1}^{N}\theta_n(\zeta)=1$ for all $\zeta\in \Ma_\md$. Such examples are useful to build a bias which takes into account local properties.

\bigskip

Note that a symmetry assumption for the kernel -- $K(z,\zeta)=K(\zeta,z)$ -- is not required to prove the consistency of the approach. For instance, assume that $K(z,\zeta)=\tilde{K}(z)$ does not depend on $\zeta$; in this case, one checks that $\K(\overline{\mu}_t)=\K(\overline{\mu}_0)=\tilde{K}(\cdot)$ does not depend on $t$, and thus $A_t=A_0$: the adaptive system~\eqref{eq:ABP_full} reduces for this choice of kernel to the non-adaptive biased dynamics~\eqref{eq:BP}. Based on this observation, it is clear that the kernel $K$ is the object which governs the coupling of the evolutions of $X$ and $A$ in the adaptive dynamics~\eqref{eq:ABP_full}, and that its choice may be crucial in practice to define an efficient algorithm. In the sequel, we consider that a kernel function $K$, satisfying Assumption~\ref{ass:kernel}, is given, and do not study quantitatively the dependence with respect to $K$ of the asymptotic results.

\subsection{Normalization}

The aim of this section is to introduce normalization operators, denoted by $\No:\mathcal{C}^0(\Ma_\md,(0,\infty))\to \mathcal{C}^0(\Ma_\md,(0,\infty))$ on the set of continuous functions from $\Ma_\md$ to $(0,\infty)$. The compactness of $\Ma_\md$ plays a crucial role again. We provide below several natural families of normalization operators. However, the presentation remains abstract to emphasize the key assumptions which will lead to the stability estimates provided below.

We will use the following convention: $f$ denotes an arbitrary element in $\mathcal{C}^0(\Ma_\md,(0,\infty))$, whereas $F=\No(f)$ (capital letter) denotes its normalized version.

\bigskip

The most important example, for which a specific notation is introduced, is when normalization is meant to construct probability distributions $\overline{f}d\pi$ which are equivalent to the reference measure $\pi$ on $\Ma_\md$:
\[
\overline{f}(z)=\frac{f(z)}{\int_{\Ma_\md} f(\zeta)\pi(d\zeta)}.
\]
In the ABP system~\eqref{eq:ABP_full}, $\exp\bigl(-A_t\bigr)$ is thus the density (with respect to $\pi$) of a probability distribution on $\Ma_\md$, for every $t\ge 0$.

\bigskip

More generally, the normalization operator $\No$ is defined by
\[
\No(f)=\frac{f}{\no(f)},
\]
where $\no:\mathcal{C}^0(\Ma_\md,(0,\infty))\to (0,\infty)$ is a function which satisfies the technical (but easy to check in practice) conditions presented below.
\begin{hyp}\label{ass:normalization}
The operator $\no:\mathcal{C}^0(\Ma_\md,(0,\infty))\to (0,\infty)$ satisfies the following conditions.
\begin{itemize}
\item There exists a sequence $\bigl(\no^{(k)}\bigr)_{k\in \N}$, such that, for every $k\in\N$, $\no^{(k)}:\mathcal{C}^0(\Ma_\md,(0,\infty))\to (0,\infty)$ is continuously differentiable, and for every $f\in \mathcal{C}^0(\Ma_\md,(0,\infty))$,
\[
\no^{(k)}(f)\underset{k\to \infty}\to \no(f);
\]
moreover the convergence is assumed to be uniform on sets of the form
\[\left\{f\in \mathcal{C}^0(\Ma_\md,(0,\infty))~;~ \min f\ge m,~\max f\le M\right\},\]
for every $0<m\le M<\infty$.
\item There exists $\gamma_\no\in (0,\infty)$ such that for all $f\in \mathcal{C}^0(\Ma_\md,(0,\infty))$ and $k\in \N^*$
\begin{equation*}
\frac{1}{\gamma_\no}\min f\leq \no^{(k)}(f) \leq \gamma_\no \max f.
\end{equation*}
\item For all $f\in \mathcal{C}^0(\Ma_\md,(0,\infty))$, $\alpha\in (0,\infty)$ and $k\in \N^*$
\begin{equation*}
\no^{(k)}(\alpha f)=\alpha \no^{(k)}(f).
\end{equation*}
\item There exists $C_\no\in (0,\infty)$ such that for all $f_1,f_2\in \mathcal{C}^0(\Ma_\md,(0,\infty))$ and $k\in \N^*$
\begin{equation*}
\big|\no^{(k)}(f_1)-\no^{(k)}(f_2)\big|\leq C_\no \max|f_1-f_2|.
\end{equation*}
\end{itemize}
\end{hyp}
Only the continuous differentiability condition is relaxed when considering the limit $k\to\infty$: $\no$ is not required to satisfy this condition. The three other conditions are satisfied when $\no^{(k)}$ is replaced with $\no$.

\bigskip

Let us provide some important consequences of the definition of $\No$ in terms of an operator $\no$ satisfying Assumption~\ref{ass:normalization}. First, note that $\No\circ\No=\No$: the normalization operator is a projection. Moreover, $F=\No(f)$ and $f$ are equal up to a multiplicative constant; more generally, for two different normalization operators $\No_1$ and $\No_2$, and any function $f$, the normalized versions $F_1=\No_1(f)$ and $F_2=\No_2(f)$ are equal up to a multiplicative constant. In particular, $F=\No(\overline{F})$, and thus in the ABP system~\eqref{eq:ABP_full}, the weights $F_\tau\bigl(\xi_\St(X_\tau)\bigr)$ are not necessarily equal to $\exp\bigl(-A_\tau(\xi_\St(X_\tau))\bigr)$ like in~\eqref{eq:ABP_simple} from the introduction; however it is important to have a fixed normalization operator, since by the second condition in Assumption~\ref{ass:normalization} the ratio between these quantities remains bounded from below and from above by positive constants.

\bigskip

We conclude this section with additional examples of normalization operators.

\begin{itemize}
\item Let $q\in[1,\infty)$, and define
\[
\no_q(f)=\bigl(\int_{\Ma_\md}f(z)^q \pi(dz)\bigr)^{\frac{1}{q}}.\]
In the case $q=1$, we recover the example introduced above: $\No_1(f)=\frac{f}{\no_1(f)}=\overline{f}$.
\item Let $z_0\in\Ma_\md$, then define
\[
\no^{z_0}(f)=f(z_0)=\int_{\Ma_\md}f(z)\delta_{z_0}(dz).
\]
\item Let also
\begin{equation*}
\no_{\min}(f)=\min_{z\in\Ma_\md}f(z) \quad , \quad \no_{\max}(f)=\max_{z\in \Ma_{\md}}f(z).
\end{equation*}
For these examples, the relaxation of the continuous differentiability condition in Assumption~\ref{ass:normalization} is essential: continuously differentiable approximations are given by
\begin{equation*}
\no_{\min}(f)=\lim_{q\to +\infty}\frac{1}{\no_q(1/f)} \quad , \quad \no_{\max}(f)=\lim_{q\to +\infty}\no_q(f).
\end{equation*}
\end{itemize}

\subsection{Well-posedness}

This section is devoted to the analysis of the well-posedness of the ABP system~\eqref{eq:ABP_full}. First, Lemma~\ref{lemma:apriori} below, is stated and proved. Second, this result is combined with a Picard iteration scheme to establish global well-posedness of the self-interacting diffusion process~\eqref{eq:ABP_full}, under stronger global Lipschitz continuity conditions for the drift coefficient. Finally, a localization argument implies global well-posedness under the assumptions on $V$ stated in Section~\ref{sec:framework}.

\begin{lemma}\label{lemma:apriori}
Let $m=\frac{\min\bigl(\min h_0,m(K)\bigr)}{\gamma_\no \max\bigl(\max h_0,M^{(0)}(K)\bigr)}$ and $M^{(k)}=\frac{\max\bigl(\max h_0,m^{(k)}(K)\bigr)\gamma_\no}{\max\bigl(\min h_0,m(K)\bigr)}$, for $k\in\left\{0,1,\ldots\right\}$, where $h_0=\mathcal{K}(\overline{\mu}_0)$, $m(K)$, $M^{(k)}(K)$ are given by Assumption~\ref{ass:kernel}, and $\gamma_\no$ is given by Assumption~\ref{ass:normalization}

Let $\tau\mapsto x_\tau\in \St$ and $\tau\mapsto F_\tau\in\mathcal{C}^{0}(\Ma_\md,(0,\infty))$ be continuous mappings, such that $\no(F_\tau)=1$ for all $\tau\in\R^+$. Define
\begin{equation*}
\mu_t=\overline{\mu}_0+\int_{0}^{t}F_\tau\bigl(\xi_\St(x_\tau)\bigr)
\delta_{x_\tau}d\tau \quad , \quad h_t=\mathcal{K}(\mu_t) \quad , \quad H_t=\No\bigl(h_t\bigr).
\end{equation*}

Then, for all $t\in \R^+$, $k\in\N$, $z\in \Ma_\md$,
\[m\leq H_t(z) \leq M^{(0)} \quad,\quad |\partial^{k}H_t(z)|\leq M^{(k)}.\]
\end{lemma}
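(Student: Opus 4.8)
The plan is to establish the bounds by directly unwinding the definitions of $h_t$, $H_t$ and the normalization operator, exploiting the convexity-type structure of $\K$ and the two-sided bound in Assumption~\ref{ass:normalization}. The starting point is the observation that, by definition, $\mu_t$ is a positive finite measure with total mass $\int_\St \mu_t(dx) = 1 + \int_0^t F_\tau(\xi_\St(x_\tau))\,d\tau =: c_t \in [1,\infty)$, so $\overline{\mu}_t = \mu_t / c_t$ and, by linearity of $\K$ in its measure argument, $h_t = \K(\mu_t) = c_t\,\K(\overline{\mu}_t)$; since $H_t = \No(h_t)$ is invariant under multiplication by the positive constant $c_t$ (third bullet of Assumption~\ref{ass:normalization}), we have $H_t = \No(\K(\overline{\mu}_t))$, i.e. only the probability measure $\overline{\mu}_t$ matters.

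First I would derive pointwise upper and lower bounds on $h_t$ itself. Writing $h_t(z) = \int_\St K(z,\xi_\St(x))\,\mu_t(dx)$ and using $m(K) \le K(z,\zeta) \le M^{(0)}(K)$ from Assumption~\ref{ass:kernel}, together with the fact that the $\delta_{x_\tau}$-part of $\mu_t$ contributes total mass $c_t - 1$ and the $\overline{\mu}_0$-part contributes mass $1$, one gets
\begin{equation*}
\min\bigl(\min h_0, m(K)\bigr) \le \frac{h_t(z)}{c_t} \le \max\bigl(\max h_0, M^{(0)}(K)\bigr),
\end{equation*}
because $h_t/c_t = \K(\overline{\mu}_t)$ is a convex combination (weight $1/c_t$ on $h_0 = \K(\overline{\mu}_0)$ and weight $(c_t-1)/c_t$ on the kernel slices $z\mapsto K(z,\xi_\St(x_\tau))$, each of which lies between $m(K)$ and $M^{(0)}(K)$). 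Next I would bound the normalizing denominator: applying the second bullet of Assumption~\ref{ass:normalization} to $h_t$ (or rather to $\no$, noting the three non-differentiability conditions hold for $\no$ as stated in the text) gives $\frac{1}{\gamma_\no}\min h_t \le \no(h_t) \le \gamma_\no \max h_t$. Combining this with the displayed two-sided bound on $h_t/c_t$ and dividing, the factor $c_t$ cancels and one obtains
\begin{equation*}
H_t(z) = \frac{h_t(z)}{\no(h_t)} \ge \frac{\min\bigl(\min h_0, m(K)\bigr)}{\gamma_\no \max\bigl(\max h_0, M^{(0)}(K)\bigr)} = m,
\end{equation*}
and similarly $H_t(z) \le \dfrac{\max\bigl(\max h_0, M^{(0)}(K)\bigr)\gamma_\no}{\min\bigl(\min h_0, m(K)\bigr)} = M^{(0)}$, matching the claimed constants (with $m^{(0)}(K) = m(K)$).

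For the derivative bounds, differentiating under the integral sign in $z$ is legitimate since $K$ is $\mathcal{C}^\infty$ with all $z$-derivatives bounded uniformly in $\zeta$ (Assumption~\ref{ass:kernel}): $\partial^k h_t(z) = \int_\St \partial_z^k K(z,\xi_\St(x))\,\mu_t(dx)$, hence $|\partial^k h_t(z)| \le M^{(k)}(K)\,c_t$ for $k \ge 1$, while for $k=0$ the bound is $\max(\max h_0, M^{(0)}(K))\,c_t$ as above; in either case $|\partial^k h_t(z)| \le \max\bigl(\max h_0, m^{(k)}(K)\bigr) c_t$ with the convention $m^{(0)}(K)=M^{(0)}(K)$ and $m^{(k)}(K)=M^{(k)}(K)$ for the relevant bound. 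Since $H_t = h_t/\no(h_t)$ and $\no(h_t)$ is a constant in $z$, we get $\partial^k H_t = \partial^k h_t / \no(h_t)$, and using the lower bound $\no(h_t) \ge \frac{1}{\gamma_\no}\min h_t \ge \frac{1}{\gamma_\no}\min\bigl(\min h_0, m(K)\bigr) c_t$ the factor $c_t$ cancels once more, yielding $|\partial^k H_t(z)| \le M^{(k)}$ with the stated constant. I do not expect a genuine obstacle here; the only point requiring a little care is bookkeeping the convex-combination structure of $h_t/c_t$ and checking that the (non-differentiability) axioms of Assumption~\ref{ass:normalization} are used only in the form valid for $\no$ (not merely $\no^{(k)}$), which the paragraph following Assumption~\ref{ass:normalization} explicitly licenses; the cancellation of the unbounded mass $c_t$ between numerator and denominator is the structural heart of the estimate.
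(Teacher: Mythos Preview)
Your proposal is correct and follows essentially the same route as the paper: write $h_t(z)=h_0(z)+\int_0^t K(z,\xi_\St(x_\tau))F_\tau(\xi_\St(x_\tau))\,d\tau$, bound it above and below by $\max(\max h_0,M^{(0)}(K))$ and $\min(\min h_0,m(K))$ times the common factor $c_t=1+\int_0^t F_\tau(\xi_\St(x_\tau))\,d\tau$, then divide by $\no(h_t)$ using the two-sided bound from Assumption~\ref{ass:normalization} so that $c_t$ cancels; the derivative case is handled identically. Your convex-combination viewpoint on $h_t/c_t$ is just a rephrasing of the paper's direct integral estimate, and your remark that only the properties of $\no$ (not $\no^{(k)}$) are needed here is accurate.
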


Observe that the parameters $m$ and $M^{(k)}$, for $k\in\left\{0,1,\ldots\right\}$ only depend on the algorithmic objects (the kernel function $K$ and the normalization operator $\no$) introduced in Section~\ref{sec:ABP}. On the contrary, they do not depend on the assumptions on the model from Section~\ref{sec:framework}.

\begin{proof}
We only prove the estimates on $\min H_t$ and $\max H_t$, since the proof of the estimates on the derivatives is similar. Note that
\begin{equation*}
h_t(z)=h_0(z)+\int_{0}^{t}K\bigl(z,\xi_\St(x_\tau)\bigr)F_\tau\bigl(\xi(x_\tau)\bigr)d\tau,
\end{equation*}
where $h_0=\K(\overline{\mu}_0)$, resp. $K$, are positive and continuous on $\Ma_\md$, resp. $\Ma_\md\times\Ma_\md$. Thus for all $t\in\R^+$
\begin{gather*}
\min_{z\in\Ma_\md}h_t(z)\geq \min\bigl(\min h_0,m(K)\bigr)\bigl(1+\int_{0}^{t}F_\tau\bigl(\xi_\St(x_\tau)\bigr)d\tau\bigr)\\
\max_{z\in\Ma_\md}h_t(z)\leq \max\bigl(\max h_0,M^{(0)}(K)\bigr)\bigl(1+\int_{0}^{t}F_\tau\bigl(\xi_\St(x_\tau)\bigr)d\tau\bigr).
\end{gather*}

Then the claim follows since $H_t=\frac{h_t}{\no(h_t)}$, and using the second condition in Assumption~\ref{ass:normalization}.
\end{proof}

Define sets of functions $\mathcal{F}$ and $\mathcal{A}$ as follows:
\begin{equation}\label{eq:fonctions_F}
\begin{cases}
\mathcal{F}=\left\{F\in \mathcal{C}^\infty(\Ma_\md); \min F\geq m>0, \max|\partial^{k}F|\leq M^{(k)}, k\ge 0\right\},\\
\mathcal{A}=\left\{A=-\log(\overline{F});~F\in\mathcal{F}\right\}.
\end{cases}
\end{equation}
Note that Lemma~\ref{lemma:apriori} may be combined with Property~\ref{prop:unif}.

We are now in position to state the main result of this section.
\begin{theo}[Well-posedness of~\eqref{eq:ABP_full}]\label{th:well-posed}
Grant assumptions of Section~\ref{sec:framework} concerning the model, and assumptions of Section~\ref{sec:ABP} concerning the algorithm.
\begin{itemize}
\item There exists a unique continuous process $t\in[0,\infty)\mapsto (X_t,\overline{\mu}_t,F_t,A_t)$, with values in $\St\times \mathcal{P}(\St) \times \mathcal{C}^{0}(\Ma_\md,(0,\infty))^2$, which is solution of the ABP system~\eqref{eq:ABP_full}.
\item For all $k\ge 1$, $\underset{t\ge 0}\sup~\E|X_t|^k<+\infty$.
\item For all $t\in\R^+$, $F_t\in\mathcal{F}$ and $A_t\in\mathcal{A}$, almost surely, where $\mathcal{F}$ and $\mathcal{A}$ are given by~\eqref{eq:fonctions_F}.
\end{itemize}
\end{theo}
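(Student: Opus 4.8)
The plan is to recast the system~\eqref{eq:ABP_full} as a coupled equation for the pair $(X_\cdot,F_\cdot)$ alone, to solve it first under an auxiliary global Lipschitz condition on the drift by a Picard fixed-point argument, and then to remove that condition by a localization procedure, non-explosion being ensured by the uniform Lyapunov estimates of Property~\ref{prop:unif}. The starting observation is that the last three lines of~\eqref{eq:ABP_full} collapse to a single Volterra equation for the unnormalized kernel density: with $h_t=\K(\mu_t)$, $\mu_t=\overline{\mu}_0+\int_0^t F_\tau(\xi_\St(X_\tau))\delta_{X_\tau}\,d\tau$ and $h_0=\K(\overline{\mu}_0)$, one has $h_t=h_0+\int_0^t K(\cdot,\xi_\St(X_\tau))F_\tau(\xi_\St(X_\tau))\,d\tau$, and since $\No$ is scale invariant while $\K(\overline{\mu}_t)=h_t/\mu_t(\St)$, it follows that $F_t=\No(h_t)$ and $A_t=-\log(\overline{F}_t)$. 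By homogeneity of $\no$, $\no(F_t)=1$, so Lemma~\ref{lemma:apriori} applied with the inputs $x_\tau=X_\tau$ and these $F_\tau$ shows that \emph{any} continuous solution automatically satisfies $F_t\in\mathcal{F}$ and $A_t\in\mathcal{A}$ for all $t\ge 0$, which is the third assertion of the theorem; it also yields, uniformly in $t$ and $\omega$, bounds on $h_t$ from above, from below away from $0$, and on all its derivatives, depending only on the algorithmic data $K$ and $\No$.

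For the well-posedness, suppose first that $\nabla V$ is globally Lipschitz; then $\Dr(V,A)$ is globally Lipschitz in $x$, uniformly over $A\in\mathcal{A}$, because the elements of $\mathcal{A}$ have uniformly bounded derivatives and $\xi$ has bounded derivatives. Fix $T>0$ and consider the map sending a path $X_\cdot\in\mathcal{C}([0,T];\St)$ to: the unique solution $F_\cdot$ of the above Volterra equation (existence, uniqueness and the bounds of Lemma~\ref{lemma:apriori} follow from a Grönwall argument, the weights being uniformly bounded); the bias $A_t=-\log(\overline{F}_t)$; and the solution $\tilde X_\cdot$ of $d\tilde X_t=\Dr(V,A_t)(\tilde X_t)\,dt+\sqrt{2}\,\Sigma\,dW_t$, $\tilde X_0=x_0$. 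The crucial estimate is that $X_\cdot\mapsto A_\cdot$ is Lipschitz from $\mathcal{C}([0,T];\St)$ with the supremum norm into $\mathcal{C}([0,T];\mathcal{C}^0(\Ma_\md))$: this uses the Lipschitz continuity of $K$ in its second argument and of $\xi_\St$ (Assumption~\ref{ass:kernel}), the Lipschitz continuity of $\no$ (Assumption~\ref{ass:normalization}), and the fact that, by Lemma~\ref{lemma:apriori}, all relevant $F_\cdot$ remain bounded away from $0$, so that $-\log$ is Lipschitz on the relevant range. Together with the uniform Lipschitz continuity of $\Dr(V,\cdot)$ in both arguments and classical moment and stability estimates for SDEs, this makes the map a contraction on $\mathcal{C}([0,T_0];\St)$ for $T_0$ small enough, with a constant independent of the initial data; iterating over successive intervals of length $T_0$ gives a unique pathwise solution on $[0,\infty)$. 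In this setting the bound $\sup_{t\ge 0}\E|X_t|^k<\infty$ is obtained from Itô's formula applied to $(1+|X_t|^2)^{k/2}$ (or, in the Langevin case, to a Hamiltonian-type Lyapunov function, cf.\ Appendix~\ref{sec:appendix_finie}), using $A_t\in\mathcal{A}$ and the coercivity estimates of Property~\ref{prop:unif}, which produce a differential inequality $\frac{d}{dt}\E[(1+|X_t|^2)^{k/2}]\le -c\,\E[(1+|X_t|^2)^{k/2}]+C$ with $c>0$ and $C$ independent of $t$.

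To remove the global Lipschitz condition, I would argue by localization. For $R>0$, let $V_R$ be a smooth modification of $V$ that coincides with $V$ on $\{|x|\le R\}$, has globally Lipschitz gradient, and satisfies the same structural hypotheses (Assumption~\ref{ass:V_OL}, resp.\ Assumption~\ref{ass:V_L}) with constants independent of $R$; let $(X^R,\overline{\mu}^R,F^R,A^R)$ denote the corresponding solution provided by the previous step, and $\tau_R=\inf\{t\ge 0:\ |X_t^R|\ge R\}$. Pathwise uniqueness and the non-anticipating structure of~\eqref{eq:ABP_full} imply that these solutions agree on $[0,\tau_R\wedge\tau_{R'}]$. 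Since Property~\ref{prop:unif} applies to the $V_R$ with constants uniform in $R$, the Lyapunov computation gives $\E[\sup_{s\le t\wedge\tau_R}(1+|X_s^R|^2)^{k/2}]\le C(t)$ uniformly in $R$, hence $\PP(\tau_R\le t)\to 0$ and $\tau_R\uparrow\infty$ almost surely. Passing to the limit produces the unique global solution of~\eqref{eq:ABP_full}, and the properties $F_t\in\mathcal{F}$, $A_t\in\mathcal{A}$ and $\sup_{t\ge0}\E|X_t|^k<\infty$ carry over (for the last one, repeat the Lyapunov estimate directly along the now non-exploding untruncated solution, localizing the martingale term by $\tau_R$ and letting $R\to\infty$).

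The step I expect to be delicate is the fixed-point argument, and specifically the supremum-norm Lipschitz dependence of the adaptive bias $A_\cdot$ on the trajectory $X_\cdot$: this is exactly the feature that places~\eqref{eq:ABP_full} outside the scope of the classical self-interacting diffusion theory of~\cite{BLR02}, and it is where Lemma~\ref{lemma:apriori} and the structural assumptions on the kernel $K$ and the normalization $\No$ are used in an essential way. By contrast, the localization and Lyapunov parts are comparatively routine once Property~\ref{prop:unif} is available.
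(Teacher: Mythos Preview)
Your overall strategy coincides with the paper's: extract the a~priori stability from Lemma~\ref{lemma:apriori}, run a Picard fixed-point argument under an auxiliary global Lipschitz assumption on $\nabla V$, then remove that assumption by localization and Property~\ref{prop:unif}. Two presentational differences: the paper sets up the contraction map on pairs $(X,F)$ rather than on $X$ alone (so it does not need to solve the Volterra equation for $F$ given $X$ as a preliminary step), and it obtains a global-in-$T$ contraction by passing to an exponentially weighted metric $d_{\alpha,T}$ instead of iterating over short intervals. Both variants work; your decomposition is arguably cleaner.

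There is, however, a genuine gap in your contraction estimate. You assert that $X_\cdot\mapsto A_\cdot$ is Lipschitz from $\mathcal{C}([0,T];\St)$ into $\mathcal{C}([0,T];\mathcal{C}^0(\Ma_\md))$ and then invoke ``uniform Lipschitz continuity of $\Dr(V,\cdot)$''. But in all the examples of Section~\ref{sec:diffusions} the drift depends on $A$ through $\nabla(A\circ\xi)$, so that $\Dr(V,A)-\Dr(V,A')$ is controlled by $\|\partial A-\partial A'\|_\infty$, not by $\|A-A'\|_\infty$. Your loop therefore does not close with a $\mathcal{C}^0$ estimate on $A$; you need $X_\cdot\mapsto A_\cdot$ Lipschitz into $\mathcal{C}([0,T];\mathcal{C}^1(\Ma_\md))$, equivalently control of $\|\partial F_t-\partial F_t'\|_\infty$ in addition to $\|F_t-F_t'\|_\infty$. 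This is exactly what the paper tracks (writing $H_t^2-H_t^1=(h_t^2-h_t^1)/\no(h_t^2)+h_t^1(\no(h_t^1)-\no(h_t^2))/(\no(h_t^1)\no(h_t^2))$ and differentiating in $z$), and it goes through by the same argument once you use the bounds on $\partial_z K$ from Assumption~\ref{ass:kernel} together with the lower bound on $\no(h_t)$ coming from Lemma~\ref{lemma:apriori}. So the fix is routine, but as stated your estimate is one derivative short.
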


We provide a sketch of proof of~\ref{th:well-posed}. In the arguments presented below, we emphasize the key role played by Lemma~\ref{lemma:apriori} combined with Property~\ref{prop:unif}.
\begin{proof}
Let $T\in(0,\infty)$, and define the mapping $\Psi^T$ as follows. For all $(X,F)\in L^2\bigl(\Omega,\mathcal{C}([0,T],\St)\bigr)\times L^2\bigl(\Omega,\mathcal{C}([0,T],\mathcal{C}^{1}(\Ma_\md,(0,\infty)))\bigr)$, set $\Psi^{T}(X,F)=(Z,H)$ with
\begin{gather*}
Z_t=x+\sqrt{2}W_t+\int_{0}^{t}\Dr\bigl(V,A_\tau\bigr)(X_\tau)d\tau,~A_\tau=-\log(\overline{F}_\tau),\\
\mu_t=\overline{\mu}_0+\int_{0}^{t}F_\tau\bigl(\xi_\St(X_\tau)\bigr)
\delta_{X_\tau}d\tau \quad , \quad H_t=\No\bigl(\mathcal{K}(\mu_t)\bigr),
\end{gather*}
where the mapping $\mathcal{K}$ defined by~\eqref{eq:K} is extended to positive measures. Thanks to Lemma~\ref{lemma:apriori}, the process $H$ takes values in $\mathcal{F}$. Thus any fixed point $(X,F)$ of the mapping $\Psi^T$ satisfies $F_t\in\mathcal{F}$ for all $t\ge 0$, and in the sequel we may assume that $F\in L^2\bigl(\Omega,\mathcal{C}([0,T],\mathcal{F}\cap\mathcal{C}^{1}(\Ma_\md,(0,\infty)))\bigr)$.

First, assume that $V$ has a bounded second order derivative: then $\nabla V$ is globally Lipschitz continuous. More precisely, $\mathcal{D}(V,A)$ is globally Lipschitz continuous, uniformly with respect to $A\in\mathcal{A}$:
\[
\underset{A\in\mathcal{A}}\sup~\sup_{x_1\neq x_2}\frac{|\mathcal{D}(V,A)(x_2)-\mathcal{D}(V,A)(x_1)|}{|x_2-x_1|}<\infty.
\]
We claim that there exists $C\in(0,\infty)$ such that for all $T\in(0,\infty)$, for all $(X^1,F^1)$ and $(X^2,F^2)$, such that $F_t^1,F_t^2\in\mathcal{F}$ for all $t\ge 0$, then
\[
\bigl(\E\sup_{0\le t\le T}~|Z_t^2-Z_t^1|^2\bigr)^{\frac12}\le CT\Bigl(\bigl(\E\sup_{0\le t\le T}~|X_t^2-X_t^1|^2\bigr)^{\frac12}+\E\sup_{0\le t\le T}~\|\partial A_t^2-\partial A_t^1\|_{\infty}^2\bigr)^{\frac12}\Bigr)
\]
where $A_t^2=-\log(\overline{F}_t^2)$, $A_t^1=-\log(\overline{F}_t^1)$. The structure of the mapping $A\mapsto \mathcal{D}(V,A)$ for each example of diffusion processes is exploited to obtain this estimate.

Since $F_t^1\in\mathcal{F}$ and $F_t^2\in\mathcal{F}$, note that there exists $C'\in(0,\infty)$ such that for all $t\ge 0$,
\[
\|\partial A_t^2-\partial A_t^1\|\le C'(\|F_t^2-F_t^1\|_\infty+\|\partial F_t^2-\partial F_t^1\|_\infty).
\]
Moreover, let $h_t^1=\mathcal{K}(\mu_t^1)$ and $h_t^2=\mathcal{K}(\mu_t^2)$. Then
\[
\|h_t^2-h_t^1\|_\infty\le M^{(0)}(K)T\|F_t^2-F_t^1\|_\infty+M^{(0)}M^{(1)}(K)T\underset{s\in[0,t]}\sup~\|X_s^2-X_s^1\|,
\]
more generally, for all $k\in\left\{0,1,\ldots\right\}$,
\[
\|\partial^kh_t^2-\partial^kh_t^1\|_\infty\le M^{(k)}(K)T\|F_t^2-F_t^1\|_\infty+M^{(0)}M^{(k+1)}(K)T\underset{s\in[0,t]}\sup~\|X_s^2-X_s^1\|.
\]
From the proof of Lemma~\ref{lemma:apriori} and thanks to Assumption~\ref{ass:normalization},
\[
\min\bigl(\no(h_t^1),\no(h_t^2)\bigr)\ge \gamma_{\no}^{-1}\min\bigl(\min h_0,m(K)\bigr).
\]
Then, writing
\[
H_t^2-H_t^1=\frac{h_t^2-h_t^1}{\no(h_t^2)}+h_t^1\frac{\no(h_t^1)-\no(h_t^2)}{\no(h_t^1)\no(h_t^2)},
\]
and thanks to Assumption~\ref{ass:normalization},
\[
\underset{0\le t\le T}\sup~\|H_t^2-H_t^1\|_\infty+\underset{0\le t\le T}\sup~\|\partial H_t^2-\partial H_t^1\|_\infty\le CT\bigl(\underset{0\le t\le T}\sup~\|F_t^2-F_t^1\|_\infty+\underset{0\le t\le T}\sup~\|X_t^2-X_t^1\|\bigr).
\]
Note that the parameter $C\in(0,\infty)$ does not depend on the time $T$. If $CT<1$, $\Psi^T$ is a contraction mapping, and thus admits a unique fixed point, which yields a unique local solution for the ABP sytem~\eqref{eq:ABP_full}.

In fact, a proof that the solution is in fact global, with no restriction on $T$, can be obtained by introducing a family of equivalent metrics $d_{\alpha}$ on $L^2\bigl(\Omega,\mathcal{C}([0,T],\St)\bigr)\times L^2\bigl(\Omega,\mathcal{C}([0,T],\mathcal{C}^{1}(\Ma_\md,(0,\infty)))\bigr)$:
\begin{align*}
d_{\alpha,T}\bigl((X^1,F^1),(X^2,F^2)\bigr)&=\big\| \sup_{0\leq t\leq T}e^{-\alpha t}\|X^2_t-X_t^1\|\big\|_{L^2(\Omega)}\\
&+\big\| \sup_{0\leq t\leq T}e^{-\alpha t}\|F_t^2-F_t^1\|\big\|_{L^2(\Omega)}+\big\| \sup_{0\leq t\leq T}e^{-\alpha t}\|\partial F_t^2-\partial F_t^1\|\big\|_{L^2(\Omega)}.
\end{align*}
For any fixed $T$, for large enough $\alpha$, the mapping $\Psi^T$ is a contraction when the distance $d_{\alpha,T}$ is used. The computations are left to the reader.

This argument concludes the treatment of the simpler case where $\nabla V$ is globally Lipschitz continuous (and in particular the case where the state space is compact).

The general case, when the state space is not compact, may be treated by a localization procedure. Precisely, this consists in replacing the drift coefficient $\mathcal{D}(V,A)$ with $\mathcal{D}_R(V,A)$, where $R\in(0,\infty)$, such that $\mathcal{D}_R(V,A)$ is globally Lipschitz continuous and coincides with $\mathcal{D}(V,A)$ on a ball $\mathcal{B}(0,R)$ of radius $R$. Let $(X_t^R,F_t^R)_{t\ge 0}$ denote the unique solution of the system~\ref{eq:ABP_full} where $\mathcal{D}(V,A)$ is replaced with $\mathcal{D}_R(V,A)$. This solution is global.

In each of the examples treated in this article (see Section~\ref{sec:diffusions}), the modified coefficients are constructed with applying a truncation operator to $\nabla V$ only. The result of Lemma~\ref{lemma:apriori} is not modified by this procedure.

It remains to consider exit times $\tau_R=\inf\left\{t;~X_t^R\notin \mathcal{B}(0,R)\right\}$, and to prove that, for any $T\in(0,\infty)$, $\underset{R\to\infty}\lim \mathbb{P}\bigl(\tau_R<T)\to 0$.
This result is proved thanks to moment estimates of the type
\[
\sup_{R\in(0,\infty)}~\E\bigl[\sup_{0\le t\le T}~|X_t^R|^2\bigr]<\infty.
\]
Such estimates are consequences of the assumptions on the potential energy function $V$, see Assumption~\ref{ass:V_OL} and~\ref{ass:V_L}. Details are left to the readers (see also the proof of Lemma~\ref{lem:langevin}).

Note also that for $R\le R'$, then $(X_t^R,F_t^R)=(X_t^{R'},F_t^{R'})$ for $t\le \tau_R$. Thanks to this property and the result above, it is straightforward to check that passing to the limit $R\to \infty$ provides the unique solution of~\eqref{eq:ABP_full}, on arbitrary $T\in(0,\infty)$.

This concludes the sketch of proof of Theorem~\ref{th:well-posed}.
\end{proof}

\section{Convergence results}\label{sec:results}

This section contains the main results of this article, concerning the asymptotic behavior, when $t\to\infty$, of the solution of the ABP system~\eqref{eq:ABP_full}. We first study consistency, then the efficiency, of the approach. The most important result dealing with consistency is Theorem~\ref{th:cv_as_phi}: it states almost sure convergence of averages $\overline{\mu}_t(\varphi)$ to $\mu_\star(\varphi)$ (where $\mu_\star=\mu_\star^0$, see~\eqref{eq:muA}).

Section~\ref{sec:ABF} is devoted to an interpretation of the ABP system~\eqref{eq:ABP_full} as an Adaptive Biasing Force method, and to the interpretation of the consistency results presented here in this context.

In the remainder of this section, all the Assumptions from Section~\ref{sec:framework}, on the model, and of Section~\ref{sec:ABP}, on the algorithm, are considered to be satisfied. In particular, Theorem~\ref{th:well-posed} ensures that the ABP system~\eqref{eq:ABP_full} is well defined. Moreover, the state space $\St$ is finite dimensional.

\subsection{Consistency of ABP}

\subsubsection{Convergence of weighted empirical averages}

The main result of this article concerns the consistency of the approach, for estimating averages $\mu_\star(\varphi)$ using weighted empirical averages $\overline{\mu}_t(\varphi)$ (defined by~\eqref{eq:ABP_full}).
\begin{theo}\label{th:cv_as_phi}
Let $\varphi\in\mathcal{C}^\infty(\St,\R)$ be a bounded function, with bounded derivatives of any order. Then, almost surely,
\[
\overline{\mu}_t(\varphi)\underset{t\to \infty}\to \mu_\star(\varphi).
\]
\end{theo}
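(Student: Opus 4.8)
The plan is to remove the weights $F_\tau(\xi_\St(X_\tau))$ from the definition of $\overline\mu_t$ by a random change of time, so that the weighted empirical distribution becomes an ordinary empirical distribution of a time-changed self-interacting diffusion, and then to apply the ODE method from stochastic approximation. Concretely, introduce the increasing time change $t\mapsto \sigma(t)$ defined by $\sigma'(t)=F_t(\xi_\St(X_t))$ with $\sigma(0)=0$ (equivalently, its inverse $s\mapsto \theta(s)$ with $\theta'(s)=1/F_{\theta(s)}(\xi_\St(X_{\theta(s)}))$); by Theorem~\ref{th:well-posed} the weights $F_\tau(\xi_\St(X_\tau))$ are bounded above and below by positive deterministic constants (since $F_\tau\in\mathcal F$ and $\xi_\St$ together with the moment bounds control the relevant quantities via Property~\ref{prop:unif} and Assumption~\ref{ass:kernel}), so $\sigma$ is a bi-Lipschitz bijection of $[0,\infty)$ and $\sigma(t)\to\infty$ a.s. After this change, $\widetilde\mu_s:=\overline\mu_{\theta(s)}$ takes the form $\widetilde\mu_s=\frac{\widetilde\mu_0+\int_0^s\delta_{\widetilde X_r}\,dr}{1+s}$ of a genuine normalized occupation measure of the process $\widetilde X_s=X_{\theta(s)}$, and it suffices to prove $\widetilde\mu_s(\varphi)\to\mu_\star(\varphi)$ a.s.

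Next I would set up the stochastic-approximation structure. Writing $\widetilde\mu_s$ in differential form, $d\widetilde\mu_s=\frac{1}{1+s}(\delta_{\widetilde X_s}-\widetilde\mu_s)\,ds$, exhibits $\frac{1}{1+s}$ as the vanishing step size (slow variable $\widetilde\mu_s$), while $\widetilde X_s$ evolves on the fast scale according to the biased dynamics with frozen bias $A_{\theta(s)}=-\log(\overline{F_{\theta(s)}})$, itself a deterministic continuous functional of $\widetilde\mu_s$ through $F=\No(\K(\widetilde\mu))$ and $\K$. Property~\ref{prop:unif} gives ergodicity of the frozen dynamics $X^A$ uniformly over $A\in\mathcal A$, with exponentially fast convergence to $\mu_\star^A$ and uniform moment bounds; this is exactly the time-scale separation hypothesis needed for the ODE method. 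The limiting ODE for the slow variable is $\dot\mu=-\mu+\Pi(\mu)$, where $\Pi(\mu):=\mu_\star^{A(\mu)}$ with $A(\mu)=-\log\bigl(\overline{\No(\K(\mu))}\bigr)$. Using~\eqref{eq:mu_muA} (which in the time-changed variables says precisely that the weighted average against $\mu_\star^{A}$ reproduces $\mu_\star$) one checks that $\mu_\star$ — more precisely the weighted version consistent with $\overline\mu_t$ — is the unique fixed point of $\Pi$, and that the fixed point is globally attracting because $\Pi$ is a "smoothing" map (through the kernel $K$ and the normalization, it is a contraction-like map on the relevant convex set of probability measures, or at least admits a strict Lyapunov function built from the $\chi^2$ or relative-entropy type distance to $\mu_\star$). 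Then the asymptotic pseudo-trajectory / limit-set theorem (as in~\cite{B99},~\cite{BLR02}) yields that the limit set of $(\widetilde\mu_s)$ is contained in the set of equilibria of the ODE, hence reduces to $\{\mu_\star\}$, giving a.s. convergence.

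I would then spell out the two technical inputs the ODE method requires in this non-standard setting. The first is a quantitative fast-ergodicity estimate: for each $A\in\mathcal A$, bounds on the solution of the Poisson equation $\LL_A u_\varphi=\varphi-\mu_\star^A(\varphi)$ that are uniform in $A\in\mathcal A$ (existence, regularity, and growth control of $u_\varphi$ and its derivatives), which is where Proposition~\ref{propo:AuxiliaryPoisson} and Property~\ref{prop:unif} enter; this handles the martingale and remainder terms in the decomposition of $\int_0^s(\varphi(\widetilde X_r)-\mu_\star^{A_{\theta(r)}}(\varphi))\,dr$ after applying Itô to $u_\varphi(\widetilde X_s)$, together with a bound on the time-variation of $A_{\theta(s)}$ (which is $O(1/s)$ in, say, $\mathcal C^1(\Ma_\md)$ because of the $\frac{1}{1+s}$ factor in $d\widetilde\mu_s$ and the continuity of $\mu\mapsto A(\mu)$ from Lemma~\ref{lemma:apriori}). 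The second is the dynamical-systems analysis of $\dot\mu=-\mu+\Pi(\mu)$: identifying the Lyapunov function and proving global convergence to $\mu_\star$. I expect this second point — establishing that $\mu_\star$ is \emph{globally} attracting for the limiting flow, rather than merely a fixed point — to be the main obstacle, since it is the place where the specific coupling through the kernel $K$ (as opposed to the coupling in~\cite{BLR02}) matters and where one must rule out spurious equilibria; the argument will hinge on showing $A(\mu_\star^{A(\mu)}) $ relates monotonically to $A(\mu)$ and on exploiting that $\K$ maps into a compact set of smooth functions with two-sided bounds, so that the self-interaction is "defocusing". Granting these two ingredients, Theorem~\ref{th:cv_as_phi} follows for all bounded smooth $\varphi$, and undoing the time change transfers the conclusion back to $\overline\mu_t(\varphi)\to\mu_\star(\varphi)$.
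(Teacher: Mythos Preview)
Your time-change setup is correct, but you have misidentified the frozen fast dynamics and its invariant measure, and this is the crux of the whole argument. After the change $s=\theta(t)$, the process $\widetilde X_s=Y_s$ does \emph{not} evolve according to the biased dynamics~\eqref{eq:BP} with generator $\mathcal{L}_X^B$; the time change divides both drift and diffusion by $G_s(\xi_\St(Y_s))$, see~\eqref{eq:ABP_time_changed}. The generator of the frozen process is therefore $\mathcal{L}_{\mathcal{Y}}^B=\frac{1}{e^{-B\circ\xi_\St}}\mathcal{L}_X^B$ (up to the harmless constant $\no(\overline G)$), and its unique invariant distribution is proportional to $e^{-B\circ\xi_\St}\mu_\star^B=\mu_\star$, by~\eqref{eq:En} and~\eqref{eq:muA}. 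This is precisely Proposition~\ref{propo:ergo_frozen}: $\Pi(\mu)\equiv\mu_\star$ is a \emph{constant} map, so the limit ODE is the linear equation $\dot\gamma_s=\frac{1}{1+s}(\mu_\star-\gamma_s)$, which converges trivially. Your entire ``second technical input'' (Lyapunov function, ruling out spurious equilibria, monotonicity of $A(\mu_\star^{A(\mu)})$ versus $A(\mu)$, defocusing self-interaction) is not needed, and the obstacle you anticipate does not exist.

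Correspondingly, the Poisson equation you propose, $\mathcal{L}_A u_\varphi=\varphi-\mu_\star^A(\varphi)$, is not the right one: the paper works with $\mathcal{L}_X^{A}\Psi(A,\cdot)=e^{-A\circ\xi_\St}\bigl[\varphi-\mu_\star(\varphi)\bigr]$ (equation~\eqref{eq:Poisson}), centered against $\mu_\star$ and weighted by $e^{-A\circ\xi_\St}$; this is exactly what makes the It\^o decomposition of $\overline\mu_t(\varphi)-\mu_\star(\varphi)$ close up (see~\eqref{eq:Poisson_like}). The centering condition $\int e^{-A\circ\xi_\St}[\varphi-\mu_\star(\varphi)]\,d\mu_\star^A=0$ is precisely~\eqref{eq:mu_muA}. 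Once this is in place, the paper does not even run the abstract ODE machinery: it applies It\^o directly to $\Phi(t,X_t)=\frac{1}{\no(\overline F_t)}\Psi(A_t,X_t)$, bounds the three resulting terms uniformly in $A\in\mathcal A$ via Proposition~\ref{propo:Poisson_2} to get $\E|\overline\mu_t(\varphi)-\mu_\star(\varphi)|^2\le C(\varphi)/t$ (Lemma~\ref{lem:epsilon}), and upgrades to a.s.\ convergence by Borel--Cantelli along $t=e^{n\delta}$ together with the Lipschitz bound on $t\mapsto\overline\mu_{e^t}(\varphi)$ coming from~\eqref{eq:ODE}. Your outline of the first technical input (uniform Poisson bounds and $O(1/s)$ variation of $A_{\theta(s)}$) is on the right track, but it should be plugged into this direct decomposition rather than into an asymptotic pseudo-trajectory argument for a nontrivial limit flow.
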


This result is a generalization in the adaptive case of~\eqref{eq:mu_t^A}. The proof of Theorem~\ref{th:cv_as_phi} requires the introduction of auxiliary tools, and is provided in Section~\ref{sec:proof_consistency}. Several straightforward consequences of Theorem~\ref{th:cv_as_phi} are stated and proved in the next sections.

\subsubsection{Consequences of Theorem~\ref{th:cv_as_phi}}

\begin{cor}\label{cor:cv_as}
We have the almost sure convergence
\[
\overline{\mu}_t\underset{t\to\infty}\implies \mu_\star.
\]
\end{cor}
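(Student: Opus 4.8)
The plan is to deduce Corollary~\ref{cor:cv_as} from Theorem~\ref{th:cv_as_phi} by a routine approximation and separability argument, upgrading pointwise (in $\varphi$) almost sure convergence to almost sure weak convergence of the random measures $\overline{\mu}_t$. The only subtlety is the order of quantifiers: Theorem~\ref{th:cv_as_phi} gives, for each fixed $\varphi$, a full-probability event on which $\overline{\mu}_t(\varphi)\to\mu_\star(\varphi)$, but this event depends on $\varphi$, whereas weak convergence requires a single event on which convergence holds simultaneously for all bounded continuous test functions.

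First I would reduce to a countable family of test functions. In the compact case $\St=\tore^d$ (or when $\St$ is a nice Polish space), weak convergence of probability measures is metrized by a countable family: it suffices to test against a countable convergence-determining class $\{\varphi_n\}_{n\in\N}\subset\mathcal{C}^\infty(\St,\R)$ of bounded functions with bounded derivatives (e.g. a countable dense subset of smooth functions, or trigonometric polynomials on the torus). Applying Theorem~\ref{th:cv_as_phi} to each $\varphi_n$ and intersecting the countably many full-probability events, I obtain a single event $\Omega_0$ with $\PP(\Omega_0)=1$ on which $\overline{\mu}_t(\varphi_n)\to\mu_\star(\varphi_n)$ for every $n$. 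On $\Omega_0$, by a standard density argument (uniform approximation of an arbitrary bounded continuous $\varphi$ by the $\varphi_n$, using that the $\overline{\mu}_t$ and $\mu_\star$ are probability measures so the error terms are controlled uniformly in $t$), one gets $\overline{\mu}_t(\varphi)\to\mu_\star(\varphi)$ for every bounded continuous $\varphi$, i.e. $\overline{\mu}_t\implies\mu_\star$.

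In the non-compact case $\St=\R^d$ (or $\St=\R^d\times\R^d$, etc.), the density argument needs one extra ingredient, namely tightness, to rule out escape of mass: this is supplied by the uniform moment bound $\sup_{t\ge 0}\E|X_t|^k<\infty$ from Theorem~\ref{th:well-posed}, which by the construction of $\overline{\mu}_t$ as a normalized weighted occupation measure (and Fatou/Markov) yields $\sup_{t\ge0}\E\,\overline{\mu}_t(|x|^k)<\infty$, hence almost sure tightness of the family $(\overline{\mu}_t)_{t\ge 0}$ along the event where the relevant moments stay bounded. Combining almost sure tightness with convergence against the countable determining class $\{\varphi_n\}$ (now chosen as smooth compactly supported functions plus, if needed, functions separating behaviour at infinity) gives $\overline{\mu}_t\implies\mu_\star$ on a full-probability event, by the usual Prokhorov-type argument: every subsequential limit must coincide with $\mu_\star$.

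The main (mild) obstacle is purely bookkeeping: ensuring the countable test family is genuinely convergence-determining and that every subsequential weak limit is identified as $\mu_\star$ — which in the non-compact setting forces one to invoke tightness from the uniform moment estimates. There is no real analytic difficulty here; the corollary is a soft consequence of Theorem~\ref{th:cv_as_phi} together with Theorem~\ref{th:well-posed}.
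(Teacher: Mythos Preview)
Your approach is essentially correct and close to the paper's: both reduce to a countable family of test functions and intersect the corresponding full-measure events. The paper's route differs in one respect: it first extends Theorem~\ref{th:cv_as_phi} from smooth to bounded Lipschitz test functions by mollification (for a fixed Lipschitz $\varphi$, apply Theorem~\ref{th:cv_as_phi} to $\varphi_{1/n}=\rho_{1/n}\star\varphi$ for $n\in\N$ and use $\|\varphi-\varphi_{1/n}\|_\infty\to 0$), and then invokes the standard fact that on any Polish space there is a countable family $\{\varphi_n\}$ of bounded Lipschitz functions such that the metric $d(\mu^1,\mu^2)=\sum_n 2^{-n}\min\bigl(1,|\mu^1(\varphi_n)-\mu^2(\varphi_n)|\bigr)$ metrizes weak convergence. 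This works uniformly for compact and non-compact $\St$, with no tightness argument needed.

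Your tightness detour in the non-compact case is therefore unnecessary, and as written it has a gap: the bound $\sup_t\E\,\overline{\mu}_t(|x|^k)<\infty$ does not yield $\sup_t\overline{\mu}_t(|x|^k)<\infty$ almost surely, which is what almost sure tightness of the random family $(\overline{\mu}_t)_{t\ge 0}$ would require; and since $(X_t)_{t\ge 0}$ is self-interacting rather than Markov, no off-the-shelf ergodic theorem controls $\frac{1}{t}\int_0^t|X_\tau|^k\,d\tau$. The simplest fix is to follow the paper and use the mollification step together with the countable bounded-Lipschitz metrization, which handles all Polish state spaces at once.
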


The notation $\implies$ for convergence of probability distributions is introduced in Section~\ref{sec:invariant}.

\begin{proof} We first state an auxiliary result: for every $\varphi:\St\to \R$, bounded and Lipschitz continuous function, almost surely
\[
\overline{\mu}_t(\varphi)\underset{t\to \infty}\to \mu_\star(\varphi)~,~\text{almost surely}.
\]
Indeed, apply Theorem~\ref{th:cv_as_phi} for an approximating sequence $\varphi_\epsilon=\rho_\epsilon\star \varphi$, defined by convolution with smooth functions $\rho_\epsilon(\cdot)=\frac{1}{\epsilon}\rho_1\bigl(\frac{\cdot}{\epsilon}\bigr)$, where $\rho$ is of class $\mathcal{C}^\infty$, with compact support, and $\int_\St\rho d\lambda=1$.

Let ${\rm BL}(\St,\R)=\left\{\varphi:\St\to \R~;~ \varphi~\text{bounded and Lipschitz continuous}\right\}$. Then there exists a sequence of functions $\bigl(\varphi_n\bigr)_{n\ge 0}$ defined from $\St$ to $\R$, bounded and Lipschitz continuous, such that
\[
\overline{\mu}_t\underset{t\to \infty}\implies \mu_\star ~\Longleftrightarrow~ d(\overline{\mu}_t,\mu_\star)\underset{t\to \infty}\to 0,
\]
where
\[
d(\mu^1,\mu^2)=\sum_{n=0}^{\infty}\frac{1}{2^n}\min\bigl(1,\big|\int_\St \varphi_n d\mu^1-\int_\St \varphi_n d\mu^2\big|\bigr).
\]
Thanks to the convergence result above, almost surely, for every $n\ge 0$, $\overline{\mu}_t(\varphi_n)\underset{t\to\infty}\to \mu_\star(\varphi)$, and thus $d(\overline{\mu}_t,\mu_\star)\underset{t\to \infty}\to 0$ almost surely.

This concludes the proof of Corollary~\ref{cor:cv_as}.
\end{proof}

The following result deals with the almost sure convergence of the functions $\overline{F}_t$ and $A_t$. Note that contrary to Theorem~\ref{th:cv_as_phi} and Corollary~\ref{cor:cv_as}, the limits $\overline{F}_\infty$ and $A_\infty$ depend on the parameters of the algorithm, precisely on the kernel function $K$. Note that these almost sure limits are not random.

The convergence of $A_t$ to $A_\infty$, which is close to the Free Energy function $A_\star$ for well-chosen kernel functions, is one of the nice features of the ABP method, in particular when one is interested in computing free energy differences.

\begin{cor}\label{cor:cv_A}
Define, for all $z\in \Ma_\md$,
\[
\begin{cases}
\overline{F}_\infty(z)=\mu_\star\bigl(K(z,\cdot)\bigr),\\
A_\infty(z)=-\log(\overline{F}_\infty(z)).
\end{cases}
\]
Then, almost surely, for every $\ell\in\left\{0,1,\ldots\right\}$, uniformly on $\Ma_\md$,
\[
\begin{cases}
\partial^\ell\overline{F}_t\underset{t\to \infty}\to \partial^\ell\overline{F}_\infty,\\
\partial^\ell A_t\underset{t\to \infty}\to \partial^\ell A_\infty.
\end{cases}
\]
\end{cor}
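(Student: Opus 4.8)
The plan is to deduce Corollary~\ref{cor:cv_A} from Corollary~\ref{cor:cv_as} (almost sure convergence $\overline{\mu}_t\implies\mu_\star$) together with the uniform regularity estimates for $F_t$ furnished by Theorem~\ref{th:well-posed} (namely $F_t\in\mathcal{F}$ and $A_t\in\mathcal{A}$ almost surely for all $t$). The first step is to recall the identity $\overline{F}_t=\overline{\K(\overline{\mu}_t)}$, i.e. $\overline{F}_t(z)=\dfrac{\int_\St K(z,\xi_\St(x))\overline{\mu}_t(dx)}{\int_{\Ma_\md}\int_\St K(\zeta,\xi_\St(x))\overline{\mu}_t(dx)\,\pi(d\zeta)}$; since the normalization operator $\No$ only changes $\K(\overline{\mu}_t)$ by a multiplicative constant, and $\overline{F}_t$ is the specific normalization with total $\pi$-mass one, it suffices to track $\K(\overline{\mu}_t)$ and its $\pi$-integral. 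By Fubini and Assumption~\ref{ass:kernel} the denominator equals $\int_\St\bigl(\int_{\Ma_\md}K(\zeta,\xi_\St(x))\pi(d\zeta)\bigr)\overline{\mu}_t(dx)=\overline{\mu}_t(1)=1$, so in fact $\overline{F}_t(z)=\K(\overline{\mu}_t)(z)=\int_\St K(z,\xi_\St(x))\,\overline{\mu}_t(dx)$, and likewise $\overline{F}_\infty(z)=\int_\St K(z,\xi_\St(x))\mu_\star(dx)=\mu_\star(K(z,\cdot)\circ\xi_\St)$, consistently with the stated formula.

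The second step is pointwise convergence. Fix $z\in\Ma_\md$ and apply the almost sure convergence $\overline{\mu}_t\implies\mu_\star$ to the bounded continuous test function $x\mapsto K(z,\xi_\St(x))$ on $\St$ (bounded since $\Ma_\md$ is compact and $K$ is continuous, continuous since $\xi_\St$ and $K$ are continuous); this gives $\overline{F}_t(z)\to\overline{F}_\infty(z)$ almost surely for each fixed $z$. The same applies to every derivative: $\partial_z^\ell K(z,\cdot)$ is again bounded continuous, and since differentiation under the integral sign is justified by the uniform bounds $M^{(\ell)}(K)$, we get $\partial^\ell\overline{F}_t(z)=\int_\St \partial_z^\ell K(z,\xi_\St(x))\overline{\mu}_t(dx)\to\partial^\ell\overline{F}_\infty(z)$ almost surely, for each fixed $z$ and each $\ell$.

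The third step upgrades pointwise convergence to uniform convergence on $\Ma_\md$, for each order $\ell$. Here one uses that, by Theorem~\ref{th:well-posed}, $F_t\in\mathcal{F}$ for all $t$, hence the family $\{\partial^\ell\overline{F}_t\}_{t\ge 0}$ is uniformly bounded and equi-Lipschitz (indeed the $(\ell+1)$-th derivatives are bounded uniformly by constants depending only on $K$ and $\No$, via the bounds $M^{(\ell+1)}(K)$), so the family is equicontinuous; $\partial^\ell\overline{F}_\infty$ inherits the same bounds. A standard Arzelà--Ascoli / Dini-type argument then shows that pointwise convergence of a uniformly equicontinuous family on the compact space $\Ma_\md$ implies uniform convergence — alternatively, take a countable dense subset $\{z_j\}\subset\Ma_\md$, obtain almost sure pointwise convergence simultaneously at all $z_j$ (countable intersection of full-probability events, for all $j$ and all $\ell$), and then use equicontinuity to fill in the gaps uniformly. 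Finally, the statement for $A_t=-\log(\overline{F}_t)$ follows because $\overline{F}_t$ is bounded away from $0$ and from $\infty$ uniformly in $t$ (again $F_t\in\mathcal{F}$, with the same two-sided bounds transferring to $\overline{F}_t$ by the normalization conditions in Assumption~\ref{ass:normalization}), so $y\mapsto-\log y$ is smooth with bounded derivatives on the relevant compact range, and its composition with $\overline{F}_t$, together with the chain rule and Faà di Bruno's formula, converts uniform convergence of all $\partial^\ell\overline{F}_t$ into uniform convergence of all $\partial^\ell A_t$.

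I do not expect any genuinely hard obstacle here: the result is, as the text says, a straightforward consequence of Corollary~\ref{cor:cv_as}. The only point requiring a little care is the passage from pointwise to uniform convergence of the derivatives — this is where the uniform bounds from Theorem~\ref{th:well-posed} (via Lemma~\ref{lemma:apriori}) are essential, since weak convergence of measures gives convergence only against a fixed test function, i.e. only pointwise in $z$, and one must control the modulus of continuity of $z\mapsto\partial^\ell\overline{F}_t(z)$ uniformly in $t$ to conclude uniformity in $z$. A secondary bookkeeping point is ensuring the null set is chosen once and for all (countably many $z_j$, countably many $\ell$), which is immediate.
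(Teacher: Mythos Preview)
Your proof is correct and follows essentially the same route as the paper's: both identify $\overline{F}_t=\K(\overline{\mu}_t)$, obtain pointwise convergence of $\partial^\ell\overline{F}_t(z_n)$ at a countable dense set of $z_n$ by testing the almost sure convergence of $\overline{\mu}_t$ against the bounded smooth functions $\partial_z^\ell K(z_n,\xi_\St(\cdot))$, upgrade to uniform convergence via Ascoli and the uniform bounds $M^{(\ell+1)}(K)$, and finally pass to $A_t=-\log\overline{F}_t$ using the lower bound $\min\overline{F}_t\ge m>0$. The only cosmetic difference is that the paper invokes Theorem~\ref{th:cv_as_phi} directly for these test functions (packaging the countable collection into a metric $d_\infty$), whereas you invoke Corollary~\ref{cor:cv_as}; since the latter already gives convergence for \emph{all} bounded continuous test functions on a single full-probability event, your bookkeeping about countably many $z_j$ and $\ell$ is in fact unnecessary once you cite Corollary~\ref{cor:cv_as}.
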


\begin{proof}
The result is a consequence of the regularity properties of the kernel mapping $K$, of Ascoli's theorem, and of Theorem~\ref{th:cv_as_phi}.

Let $\K:\mathcal{P}(\St)\to\mathcal{C}^\infty(\Ma_\md)$ be the mapping defined by~\eqref{eq:K}.

Let $\bigl(z_n\bigr)_{n\in\N}$ denote a dense sequence in $\Ma_\md$, and define, for all $\mu^1,\mu^2\in\mathcal{P}(\St)$,
\[
d_\infty(\mu^1,\mu^2)=\sum_{\ell,n=0}^{\infty}\frac{1}{2^{\ell+n}}\min\bigl(1,\big|\int_{\St}\partial_z^\ell K(z_n,\xi_\St(\cdot))d\mu^1-\int_{\St}\partial_z^\ell K(z_n,\xi_\St(\cdot))d\mu^2\big|\bigr).
\]
Then for any sequence $\bigl(\mu^k\bigr)_{k\in\N}$ and any $\mu$ in $\mathcal{P}(\St)$,
\begin{itemize}
\item if $\mu^k\underset{k\to \infty}\implies\mu$, then $d_\infty(\mu^k,\mu)\underset{k\to \infty}\to 0$;
\item if $d_\infty(\mu^k,\mu)\underset{k\to \infty}\to 0$, then for every $\ell\in\left\{0,1,\ldots\right\}$,
\[
\partial^\ell\K(\mu^k)\underset{k\to\infty}\to\partial^\ell\K(\mu),
\]
uniformly on $\Ma_\md$, thanks to Ascoli's theorem and the bound $\|\partial_z^{k+1}K\|_\infty\le M^{(k+1)}(K)$.
\end{itemize}

Thanks to Theorem~\ref{th:cv_as_phi}, it is straightforward to conclude that almost surely
\[
d_\infty(\overline{\mu}_t,\mu_\star)\underset{t\to\infty}\to 0.
\]
These arguments yield the convergence of $\overline{F}_t$. The convergence of $A_t=-\log(\overline{F}_t)$ is then obtained thanks to the almost sure lower bound from Theorem~\ref{th:well-posed},
\[
\underset{\Ma_\md}\min \overline{F}_t\ge m>0.
\]
\end{proof}

\subsubsection{A remark concerning convergence of the gradient of $A_t$}\label{sec:ABF}

To keep notation simple, consider the framework of Section~\ref{sec:intro_ABP}: the diffusion process is the Brownian dynamics on $\tore^d$, and $\xi(x_1,\ldots,x_d)=x_1\in \tore$, {\it i.e.} $\md=1$. Assume in addition that the kernel $K$ is symmetric, $K(z,\zeta)=K(\zeta,z)$.

The main observation in this section is that, for the ABP method, one may write the derivative $\partial_{x_1}A_t(x_1)$ of $A_t$ as a conditional expectation, up to introducing an additional variable.

This observation is motivated by the following statement: the Free Energy function $A_{\star}$ satisfies the identity (expression of the equilibrium mean force)
\begin{equation*}
A_{\star}'(x_1)=\int_{\tore^{d-1}}(\partial_{x_1}V(x))e^{-(V(x)-A_{\star}(x_1))}dx_2\ldots dx_d=\E_{X\sim \mu_{\star}}[\partial_{x_1}V(X) \big| X_1=x_1],
\end{equation*}
where in the conditional expectation the random variable $X$ is distributed according to $\mu_{\star}$. This identity is the starting point for constructions of Adaptive Biasing Force (ABF) methods mentioned in Section~\ref{sec:intro}.

Such a formula does not hold for $A_t$, when $t<\infty$. However, the following generalization may be used. On the one hand, for all $t\ge 0$,
\begin{equation*}
A_t'(z)=-\frac{\int_{\tore^{d-1}}\partial_zK(z,x_1)\overline{\mu}_t(dx)}{\int_{\tore^{d-1}}K(z,x_1)\overline{\mu}_t(dx)}=\E_{(X,Z)\sim\eta_t}\Bigl[-\frac{\partial_zK(Z,\xi(X))}{K(Z,\xi(X))} \Big| Z=z \Bigr],
\end{equation*}
where $\eta_t(dx,dz)=K\bigl(z,\xi(x)\bigr)\overline{\mu}_t(dx)dz$ is a probability distribution on $\tore^{d}\times \tore$, which depends on the kernel function $K$. Observe that if $(X,Z)\sim \eta_t$, in general $Z\neq \xi(X)$, hence the need of the new notation instead of conditional expectations. On the other hand, the expression above for the equilibrium mean force can be rewritten in the similar form
\[
A_{\star}'(z)=\E_{(X,Z)\sim\eta_\star}[\partial_{x_1}V(X) \big| Z=z],
\]
where $\eta_\star(dx,dz)=\mathds{1}_{z=x_1}\mu_{\star}(dx)dz$. Note that if $(X,Z)\sim\eta_\star$, then the equality $Z=\xi(X)$ is now satisfied.

Let us now check that these expressions are consistent with Corollary~\ref{cor:cv_A}. Letting $t\to \infty$, thanks to Corollary~\ref{cor:cv_as}, it is straightforward to check that $\eta_t$ converges almost surely to $\eta_{\infty}(dx,dz)=K\bigl(z,x_1\bigr)\mu_\star(dx)dz$. We thus obtain different expressions of $A_\infty'(z)$:
\begin{align*} A_{\infty}'(z)&=\E_{(X,Z)\sim\eta_t}\Bigl[-\frac{\partial_zK(Z,\xi(X))}{K(Z,\xi(X))} \Big| Z=z \Bigr]\\
&=-\frac{\int_{\tore^{d-1}}\partial_zK(z,x_1){\mu}_\star(dx)}{\int_{\tore^{d-1}}K(z,x_1)\mu_\star(dx)}\\
&=\frac{\int_{\tore^{d-1}}\partial_{x_1}V(x)K(z,x_1){\mu}_\star(dx)}{\int_{\tore^{d-1}}K(z,x_1)\mu_\star(dx)}\\
&=\E_{(X,Z)\sim \eta_{\infty}}[\partial_{x_1}V(X) \big| Z=z],
\end{align*}
thanks to the use of an integration by parts formula. Due to the presence of the kernel function $K$, $\eta_\star\neq\eta_\infty$, and thus $A_{\star}'(z)\neq A_{\infty}'(z)$.

The observation above may be the starting point for other types of Adaptive Biasing methods, based on a single realization of the stochastic process and a self-interaction mechanism using an empirical distribution.

\subsection{Applications to the diffusion processes of Section~\ref{sec:diffusions}}

The aim of this section is to specify, for each of the examples of diffusion processes from Section~\ref{sec:diffusions}:
\begin{itemize}
\item the convergence result of Theorem~\ref{th:cv_as_phi}, for well chosen test functions $\varphi$;
\item the expression of the limit $\overline{F}_\infty=e^{-A_\infty}$, in terms of the kernel $K$ and of the free energy function $A_\star$.
\end{itemize}

We introduce the probability distribution $\mu_\star^{{\rm ref}}(dx)=\frac{e^{-V(x)}}{\int_{E_d}e^{-V(y)}dy}dx$ on $E_d$. Observe that in all the examples $\mu_\star^{{\rm ref}}$ is the marginal of the distribution $\mu_\star$ with respect to its $E_d$-valued component (the equality $\mu_\star^{{\rm ref}}=\mu_\star$ holds true only in the Brownian case). As a consequence, the practitioner may choose one of the three dynamics (Brownian, Langevin or extended dynamics) of Section~\ref{sec:diffusions} to estimate averages $\mu_\star^{{\rm ref}}(\varphi)$.

We also denote by $A_\star^{{\rm ref}}$ the Free Energy function associated with the reaction coordinate $\xi$ and the probability distribution $\mu_\star^{{\rm ref}}$: by definition, $e^{-A_\star^{{\rm ref}}}$ is the Radon-Nikodym derivative of the image of $\mu_\star^{{\rm ref}}$ by $\xi$, with respect to the probability distribution $\pi$ on $\Ma_\md$.

Assume that the kernel $K=K_\delta$ depends on $\delta>0$, and is such that the probability distribution $K_\delta(z,\zeta)\pi(dz)\pi(d\zeta)$ converges when $\delta\to 0$, to $\delta_{z}(d\zeta)\pi(dz)$. Then, when $\delta\to 0$ (and also $\epsilon\to 0$, in the extended dynamics case), the expressions below prove that $A_\infty$ is an approximation of $A_\star^{{\rm ref}}$. We do not provide quantitative estimates.

\subsubsection{Brownian dynamics (Section~\ref{sec:OL})}

\begin{itemize}
\item \underline{Computation of averages:} for every $\varphi\in\mathcal{C}^\infty(E_d,\R)$, bounded and with bounded derivatives, almost surely
\[
\int\varphi d\mu_\star^{{\rm ref}}=\underset{t\to \infty}\lim \frac{1+\int_{0}^{t}F_\tau(\xi(X_\tau))\varphi(X_\tau)d\tau}{1+\int_{0}^{t}F_\tau(\xi(X_\tau))d\tau}.
\]
\item \underline{Free Energy function:}
\[
e^{-A_\infty(\cdot)}=\overline{F}_\infty(\cdot)=\int_{E_d}K_\delta(\cdot,\xi(x))\mu_\star^{{\rm ref}}(dx)=\int_{\Ma_\md}K_\delta(\cdot,\zeta)e^{-A_\star^{{\rm ref}}(\zeta)}\pi(d\zeta).
\]
\end{itemize}

In particular, Theorem~\ref{th:cv_simple}, stated in Section~\ref{sec:intro_ABP} and taken from~\cite{BB16}, is a consequence of Corollaries~\ref{cor:cv_as} and~\ref{cor:cv_A}, in the simplified context.

\subsubsection{Langevin dynamics (Section~\ref{sec:L})}

We use the notation $X_t=(q_t,p_t)$.

\begin{itemize}
\item \underline{Computation of averages:} for every $\varphi\in\mathcal{C}^\infty(E_d,\R)$, bounded and with bounded derivatives, almost surely
\[
\int\varphi d\mu_\star^{{\rm ref}}=\underset{t\to \infty}\lim \frac{1+\int_{0}^{t}F_\tau(\xi(q_\tau))\varphi(q_\tau)d\tau}{1+\int_{0}^{t}F_\tau(\xi(q_\tau))d\tau}.
\]
\item \underline{Free Energy function:}
\[
e^{-A_\infty(\cdot)}=\overline{F}_\infty(\cdot)=\int_{E_d}K_\delta(\cdot,\xi(q))\mu_\star^{{\rm ref}}(dq)=\int_{\Ma_\md}K_\delta(\cdot,\zeta)e^{-A_\star^{{\rm ref}}(\zeta)}\pi(d\zeta).
\]
\end{itemize}

Observe that the free energy function $A_\infty$ is the same for the Brownian and the Langevin dynamics. This identity is in fact obtained since $\xi_\St(q,p)=\xi(q)$ only depends on $q\in E_d$.

\subsubsection{Extended dynamics (Section~\ref{sec:E})}

We use the notation $(X_t,Z_t)$. Recall that $\xi_\St(x,z)=z$ in this case.

\begin{itemize}
\item \underline{Computation of averages:} for every $\varphi\in\mathcal{C}^\infty(E_d,\R)$, bounded and with bounded derivatives, almost surely
\[
\int\varphi d\mu_\star^{{\rm ref}}=\underset{t\to \infty}\lim \frac{1+\int_{0}^{t}F_\tau(Z_\tau)\varphi(X_\tau)d\tau}{1+\int_{0}^{t}F_\tau(Z_\tau)d\tau}.
\]
\item \underline{Free Energy function:}
\begin{align*}
e^{-A_\infty(\cdot)}=\overline{F}_\infty(\cdot)&=\int_{E_d\times \Ma_\md}K(\cdot,z)\mu_\star(dxdz)\\
&=\int_{E_d\times\Ma_\md}K(\cdot,z)K_{\epsilon}^{{\rm ext}}(z,\xi(x))\mu_{\star}^{{\rm ref}}(dx)\pi(dz)\\
&=\int_{\Ma_\md}\Bigl(\int_{\Ma_\md}K(\cdot,z)K_{\epsilon}^{{\rm ext}}(z,\zeta)\pi(dz)\Bigr)e^{-A_\star^{{\rm ref}}(\zeta)}\pi(d\zeta),
\end{align*}
\end{itemize}
where we have introduced the auxiliary kernel $K_\epsilon^{{\rm ext}}:\Ma_\md\times\Ma_\md\to (0,\infty)$, such that $\mu_\star(dxdz)=K_{\epsilon}^{{\rm ext}}(z,\xi(x))\mu_\star^{{\rm ref}}(dx)\pi(dz)$: up to a multiplicative constant, $K_\epsilon^{{\rm ext}}(z,\zeta)=\exp\bigl(-\frac{1}{2\epsilon}|z-\zeta|^2\bigr)$. Note that the expression of $A_\infty$ is not the same as in the previous examples, due to the additional term in the definition of the extended potential energy function on $E_d\times \Ma_\md$. However, when $\epsilon\to 0$, $A_\infty$ converges to $A_\star^{{\rm ref}}$: this observation justifies the use of the extended dynamics in the context of free energy computations.

\subsection{Efficiency}\label{sec:efficiency}

We now state and prove a series of results concerning the efficiency of the approach, first in a qualitative way, second with a more quantitative approach. Corollary~\ref{cor:conv_rho} deals with the convergence of the non-weighted empirical distribution $\overline{\rho}_t$, defined by~\eqref{eq:rho}; it is a straightforward consequence of Corollary~\ref{cor:cv_as}. Proposition~\ref{propo:variance} deals with the mean-square error, and identifies an asymptotic variance. Since the proof of Proposition~\ref{propo:variance} requires tools introduced in Section~\ref{sec:proof_consistency}, we postpone its proof to Section~\ref{sec:proof_variance}.

In terms of the behavior of the occupation measure and of the asymptotic variance, the results stated below may be interpreted as follows: in the asymptotic regime $t\to \infty$, the Adaptive Biasing Potential method~\eqref{eq:ABP_full} performs in the same way as the non-adaptive Biasing Potential method~\eqref{eq:BP}, with the bias $A=A_\infty$.

Note that these results are asymptotic, when $t\to\infty$; it would also be interesting to study more quantitatively the convergence, for each of the results. This question is left for future works.

\subsubsection{Convergence of non-weighted empirical distributions}

In this section, we focus on the convergence of non-weighted empirical averages $\overline{\rho}_t(\ph)$, where $\overline{\rho}_t$ is the probability distribution on $\St$ defined by
\begin{equation}\label{eq:rho}
\overline{\rho}_t=\frac{\overline{\mu}_0+\int_{0}^{t}\delta_{X_\tau}d\tau}{1+t}.
\end{equation}
We refer to $\overline{\rho}_t$ as the non-weighted empirical distribution, or as the occupation measure, associated with the diffusion process $\bigl(X_t\bigr)_{t\ge 0}$ defined by~\eqref{eq:ABP_full}. We have the following result.
\begin{cor}\label{cor:conv_rho}
Let $\varphi\in\mathcal{C}^\infty(\St,\R)$ be a bounded function, with bounded derivatives of any order. Then
\begin{equation}\label{eq:cor_conv_rho}
\overline{\rho}_t(\ph)\underset{t\to +\infty}\to\mu_{\star}^{A_\infty}(\ph)~,~\text{almost surely},
\end{equation}
where $A_\infty=\underset{t\to \infty}\lim A_t$ (see Corollary~\ref{cor:cv_A}), and $\mu_\star^{A_\infty}$ is given by~\eqref{eq:muA}.

Moreover, almost surely, $\overline{\rho}_t\underset{t\to \infty}\implies \mu_\star^{A_\infty}$.
\end{cor}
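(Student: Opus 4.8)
The plan is to derive Corollary~\ref{cor:conv_rho} from Theorem~\ref{th:cv_as_phi} (equivalently Corollary~\ref{cor:cv_as}) and Corollary~\ref{cor:cv_A} by relating the non-weighted empirical distribution $\overline{\rho}_t$ to the weighted one $\overline{\mu}_t$ through the weights $F_\tau(\xi_\St(X_\tau))$, and then identifying the limiting measure via the non-adaptive importance-sampling identity~\eqref{eq:mu_muA}, now with the (random, but a.s.\ deterministic) limiting bias $A=A_\infty$.

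First I would write, for a bounded $\varphi$ with bounded derivatives,
\[
\overline{\rho}_t(\varphi)=\frac{\overline{\mu}_0(\varphi)+\int_0^t\varphi(X_\tau)\,d\tau}{1+t},
\qquad
\overline{\mu}_t(\varphi)=\frac{\overline{\mu}_0(\varphi)+\int_0^t F_\tau(\xi_\St(X_\tau))\varphi(X_\tau)\,d\tau}{1+\int_0^t F_\tau(\xi_\St(X_\tau))\,d\tau},
\]
and I would apply $\overline{\mu}_t$ to the test function $\varphi/\overline{F}_\infty\circ\xi_\St$, which is admissible since by Theorem~\ref{th:well-posed} and Corollary~\ref{cor:cv_A} we have $\min_{\Ma_\md}\overline{F}_\infty\ge m>0$ and $\overline{F}_\infty$ is smooth with bounded derivatives on the compact manifold $\Ma_\md$. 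The idea is that $F_\tau\circ\xi_\St$ should be replaced by $\overline{F}_\infty\circ\xi_\St$ up to the fixed (in $\tau$, depending on $t$ only) normalization constant and up to an error that vanishes in a Cesàro sense. Precisely, I would show that $\sup_{0\le \tau\le t}\|F_\tau-c_\tau\overline{F}_\infty\|_{\mathcal C^0}\to 0$ where $c_\tau=\no(\overline{F}_\infty)/\no(\overline{F}_t)$... — more cleanly, since $F_t=\No(\K(\overline{\mu}_t))$ and $\K(\overline{\mu}_t)\to\K(\mu_\star)=\overline{F}_\infty$ (up to normalization) uniformly by Corollary~\ref{cor:cv_A}, and since $\No$ is $1$-homogeneous and Lipschitz (Assumption~\ref{ass:normalization}), we get $\|F_t-\No(\overline{F}_\infty)\|_{\mathcal C^0}\to 0$ a.s.; here $\No(\overline{F}_\infty)$ is just $\overline{F}_\infty$ rescaled by the fixed constant $1/\no(\overline{F}_\infty)$.

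With that convergence in hand, a standard Cesàro/Toeplitz argument gives, almost surely,
\[
\frac{\int_0^t F_\tau(\xi_\St(X_\tau))\,\psi(X_\tau)\,d\tau}{\int_0^t F_\tau(\xi_\St(X_\tau))\,d\tau}-\frac{\int_0^t \overline{F}_\infty(\xi_\St(X_\tau))\,\psi(X_\tau)\,d\tau}{\int_0^t \overline{F}_\infty(\xi_\St(X_\tau))\,d\tau}\underset{t\to\infty}\to 0
\]
for any bounded continuous $\psi$ (the prefactor $1/\no(\overline{F}_\infty)$ cancels in the ratio), provided $\int_0^t F_\tau(\xi_\St(X_\tau))\,d\tau\to\infty$, which follows from the uniform lower bound $F_\tau\ge m/\no(\cdot)>0$ on the compact set. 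Applying this with $\psi=1/\overline{F}_\infty\circ\xi_\St$ and $\psi=\varphi/\overline{F}_\infty\circ\xi_\St$, and comparing with $\overline{\mu}_t$ applied to the same functions, I obtain
\[
\overline{\rho}_t(\varphi)=\frac{\overline{\mu}_0(\varphi)+\int_0^t\varphi(X_\tau)\,d\tau}{1+t}\underset{t\to\infty}\to \frac{\mu_\star\bigl(\varphi/\overline{F}_\infty\circ\xi_\St\bigr)}{\mu_\star\bigl(1/\overline{F}_\infty\circ\xi_\St\bigr)}\quad\text{a.s.}
\]
It then remains to recognize the right-hand side as $\mu_\star^{A_\infty}(\varphi)$: since $\overline{F}_\infty=e^{-A_\infty}$ so $1/\overline{F}_\infty\circ\xi_\St=e^{A_\infty\circ\xi_\St}$, and using~\eqref{eq:muA}–\eqref{eq:En} which give $\mu_\star^{A_\infty}(dx)=\frac{e^{A_\infty(\xi_\St(x))}}{Z^{A_\infty}/Z^0}\,\mu_\star(dx)$ (the Radon-Nikodym derivative of $\mu_\star^{A_\infty}$ with respect to $\mu_\star$ being proportional to $e^{A_\infty\circ\xi_\St}$), the ratio above is exactly $\mu_\star^{A_\infty}(\varphi)$. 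This is precisely identity~\eqref{eq:mu_muA} read in reverse with $A=A_\infty$.

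Finally, the upgrade from convergence of $\overline{\rho}_t(\varphi)$ for all smooth bounded $\varphi$ with bounded derivatives to convergence $\overline{\rho}_t\implies\mu_\star^{A_\infty}$ in distribution is done exactly as in the proof of Corollary~\ref{cor:cv_as}: first extend to bounded Lipschitz $\varphi$ by mollification, then use a countable determining family $(\varphi_n)$ and the metric $d(\mu^1,\mu^2)=\sum_n 2^{-n}\min(1,|\mu^1(\varphi_n)-\mu^2(\varphi_n)|)$ for the weak topology. The main obstacle is the Cesàro comparison step: one must be careful that the almost sure uniform convergence $F_\tau\to\overline{F}_\infty$ (up to the normalization constant) is uniform in $\tau$ over $[0,t]$ as $t\to\infty$ — i.e.\ that $\sup_{\tau\le t}\|F_\tau-\No(\overline{F}_\infty)\|_{\mathcal C^0}$ need not go to zero, only that the tail beyond any fixed time does, which together with $\int_0^t F_\tau\,d\tau\to\infty$ and boundedness of all quantities suffices to make the early-time discrepancy negligible after division. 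Handling this rigorously (splitting $\int_0^t=\int_0^{T_0}+\int_{T_0}^t$, bounding the first piece by a constant and the second by $\varepsilon\cdot t$-type estimates, then letting $t\to\infty$ and $T_0\to\infty$) is the only genuinely delicate point; everything else is bookkeeping using the uniform bounds from Theorem~\ref{th:well-posed} and the compactness of $\Ma_\md$.
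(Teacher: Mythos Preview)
Your proposal is correct and follows essentially the same route as the paper: replace the time-dependent weight $F_\tau$ by its almost sure limit $F_\infty$ (equivalently $\overline{F}_\infty$, up to the harmless constant $\no(\overline{F}_\infty)$) via a Ces\`aro argument, apply Theorem~\ref{th:cv_as_phi} to the admissible test functions $\varphi/F_\infty\circ\xi_\St$ and $1/F_\infty\circ\xi_\St$, and identify the resulting ratio as $\mu_\star^{A_\infty}(\varphi)$ through~\eqref{eq:mu_muA}--\eqref{eq:En}. The paper organizes the computation slightly differently, introducing the auxiliary (non-probability) measure $\rho_t=\frac{\overline{\mu}_0+\int_0^t\delta_{X_\tau}d\tau}{1+\int_0^t F_\tau\circ\xi_\St(X_\tau)\,d\tau}$ so that $\overline{\rho}_t=\rho_t/\rho_t(1)$, and then shows $\rho_t(\varphi)=\overline{\mu}_t(\varphi/F_\infty\circ\xi_\St)+o(1)$ directly; your two-ratio Ces\`aro comparison achieves the same thing and your splitting $\int_0^t=\int_0^{T_0}+\int_{T_0}^t$ is exactly what makes the paper's ``version of Ces\`aro's Lemma'' work.
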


The arguments below justify that Corollary~\ref{cor:conv_rho} can be interpreted, qualitatively, as an efficiency property of the ABP method.

First, observe that considering the biased dynamics $\bigl(X_t^A\bigr)_{t\ge 0}$ given by~\eqref{eq:BP}, and setting
\[
\overline{\rho}_t^A=\frac{\overline{\mu}_0+\int_{0}^{t}\delta_{X_\tau^A}d\tau}{1+t},
\]
then almost surely $\overline{\rho}_t^A(\varphi)\underset{t\to \infty}\to \mu_{\star}^{A}(\varphi)$. The limit in~\eqref{eq:cor_conv_rho}, when the adaptive dynamics is used, is the same as when using the non-adaptive dynamics~\eqref{eq:BP}, with $A=A_\infty$.

Second, observe that the image by the mapping $\xi_\St:\St\to \Ma_\md$ of the probability distribution $\mu_\star^A$ has density with respect to $\pi$ proportional to
\[
\exp\bigl(-A_{\star}+A\bigr).
\]
This density is constant, equal to $1$, when $A=A_\star$: this means that in the asymptotic limit $t\to \infty$, the values of $\xi_\St(X_t^{A_\star})$ are distributed according to the reference probability distribution $\pi$. On the contrary, when $A=0$, the values of $\xi_\St(X_t^{0})$ are distributed according to $\pi_\star^0=e^{-A_\star}d\pi$.

Assume that $\pi$ is the uniform distribution on $\Ma_\md=\tore^\md$; assume also that all the metastability of the system is encoded by the reaction coordinate $\xi$. If $A_\star$ has several local minima, then $\pi_\star^0$ is a multimodal distribution, and the diffusion process $\bigl(X_t^0\bigr)_{t\ge 0}$ is metastable, and does not efficiently sample all the state space. Thus the convergence of $\overline{\rho}_t^0$ to $\mu_\star^0$ is expected to be slower than the convergence of $\overline{\rho}_t^{A_\star}$ to $\mu_{\star}^{A_\star}$. Indeed, the exploration of the metastable states tends to be uniform, when $t\to\infty$, when observed through the reaction coordinate mapping.

Since $A_\infty$ is an approximation of the Free Energy function $A_\star$, for well-chosen kernel functions $K$, efficiency of the ABP method is justified by the observations above.

We now provide the proof of Corollary~\ref{cor:conv_rho}, with elementary arguments. The proof of the almost sure convergence of the probability distributions is obtained as in the proof of Corollary~\ref{cor:cv_as}, therefore we only focus on the convergence of averages $\overline{\rho}_t(\varphi)$.

\begin{proof}[Proof of Corollary~\ref{cor:conv_rho}]

Introduce the auxiliary measure
\begin{equation}
\rho_t= \frac{\overline{\mu}_0+\int_{0}^{t}\delta_{X_\tau}d\tau}{1+\int_{0}^{t}F_\tau\circ\xi_{\St}(X_\tau)d\tau} =\frac{(1+t)\overline{\rho}_t}{1+\int_{0}^{t}F_\tau\circ\xi_{\St}(X_\tau)d\tau}.
\end{equation}
Since the measures $\rho_t$ and $\overline{\rho}_t$ only differ by a multiplicative (normalization) constant, one has the identity $\overline{\rho}_t=\frac{\rho_t}{\rho_t(1)}$. Then, note that
\begin{align*}
\rho_t(\ph)&=\frac{\overline{\mu}_0(\ph)+\int_{0}^{t}F_\tau\circ\xi_\St(X_\tau)\frac{\ph(X_\tau)}{F_\tau\circ\xi_\St(X_\tau)}d\tau}{1+\int_{0}^{t}F_\tau\circ\xi_{\St}(X_\tau)d\tau}\\
&=\frac{\overline{\mu}_0(\ph)+\int_{0}^{t}F_\tau\circ\xi_\St(X_\tau)\frac{\ph(X_\tau)}{F_\infty\circ\xi_\St(X_\tau)}d\tau}{1+\int_{0}^{t}F_\tau\circ\xi_{\St}(X_\tau)d\tau}\\
&~+\frac{1}{1+\int_{0}^{t}F_\tau\circ\xi_{\St}(X_\tau)d\tau}\int_{0}^{t}F_\tau\circ\xi_\St(X_\tau)\ph(X_\tau)\bigl(\frac{1}{F_\tau\circ\xi_\St(X_\tau)}-\frac{1}{F_\infty\circ\xi_\St(X_\tau)}\bigr)d\tau\\
&=\overline{\mu}_t\bigl(\frac{\ph}{F_{\infty}\circ\xi_\St}\bigr)+o(1),
\end{align*}
using the following version of Cesaro's Lemma: if $a:[0,\infty)\to \mathbb{R}$ is a continuous function such that $a(t)\underset{t\to \infty}\to 0$, then $\frac{1}{t}\int_0^t a(\tau)d\tau\underset{t\to \infty}\to 0$. This result may be applied, thanks to the almost sure lower bound $1+\int_{0}^{t}F_\tau\circ\xi_{\St}(X_\tau)d\tau\ge 1+mt$; moreover thanks to Corollary~\ref{cor:cv_A}, $F_t=\No(\overline{F}_t)\underset{t\to +\infty}\to F_\infty=\No(\overline{F}_\infty)$, uniformly on $\Ma_\md$, almost surely.

Moreover, the function $\frac{\ph}{F_\infty\circ \xi_\St}$ is bounded and of class $\mathcal{C}^\infty$, with bounded derivatives (using $\min F_\infty\ge m>0$ thanks to Theorem~\ref{th:well-posed}). Applying Theorem~\ref{th:cv_as_phi}, almost surely
\begin{align*}
\overline{\rho}_t(\ph)=\frac{\rho_t(\ph)}{\rho_t(1)}\underset{t\to +\infty}\to&\frac{\mu_\star\Bigl(\ph/F_{\infty}\bigl(\xi_\St(\cdot)\bigr)\Bigr)}{\mu_\star\Bigl(1/F_{\infty}\bigl(\xi_\St(\cdot)\bigr)\Bigr)}
=\frac{\mu_\star\Bigl(\ph/\overline{F}_{\infty}\bigl(\xi_\St(\cdot)\bigr)\Bigr)}{\mu_\star\Bigl(1/\overline{F}_{\infty}\bigl(\xi_\St(\cdot)\bigr)\Bigr)}\\
&=\frac{\int_{\St}\ph(x)
\exp\bigl(-\bigl(\En(V)(x)-A_\infty(\xi_\St(x))\bigr)\bigr)\lambda(dx)}{\int_{\St}
\exp\bigl(-\bigl(\En(V)(x)-A_\infty(\xi_\St(x))\bigr)\bigr)\lambda(dx)}\\
&=\frac{\int_{\St}\ph(x)
\exp\bigl(-\bigl(\En(V,A_\infty)(x)\bigr)\bigr)\lambda(dx)}{\int_{\St}
\exp\bigl(-\bigl(\En(V,A_\infty)(x)\bigr)\bigr)\lambda(dx)}\\
&=\mu_\star^{A_\infty}(\ph),
\end{align*}
thanks to the identity~\eqref{eq:En}, and to~\eqref{eq:muA}. This concludes the proof.
\end{proof}

\subsubsection{Asymptotic mean-square error}

This section is devoted to a more quantitative approach, concerning the behavior when $t\to\infty$ of the mean-square error
\[
\E\big|\overline{\mu}_t(\varphi)-\mu_\star(\varphi)\big|^2,
\]
for functions $\varphi\in\mathcal{C}^\infty(\St,\R)$, bounded and with bounded derivatives.

In order to compare the performance of the adaptive and non-adaptive versions of the biasing potential approach, introduce the following quantity
\[
\mathbf{V}_\infty(\varphi,A)=\underset{t\to \infty}\limsup ~t\E|\overline{\mu}_t^A(\varphi)-\mu_\star(\varphi)|^2\in[0,\infty],
\]
where $A:\Ma_\md\to\R$ is fixed, $\overline{\mu}_t^A(\varphi)$ is the estimator of $\mu_\star(\varphi)$ defined by the left-hand side of~\eqref{eq:mu_t^A}, for every $t\ge 0$, using the biased dynamics~\eqref{eq:BP}.

In Section~\ref{sec:proof_variance}, it will be proved that in fact
\[
\mathbf{V}_\infty(\varphi,A)=\underset{t\to \infty}\lim~t\E|\overline{\mu}_t^A(\varphi)-\mu_\star(\varphi)|^2\in(0,\infty)
\]
is a non-degenerate limit.

The following result, concerning the asymptotic mean-square error of the estimator $\overline{\mu}_t(\varphi)$ of $\mu_\star(\varphi)$, constructed using the adaptively biased dynamics~\eqref{eq:ABP_full}.
\begin{propo}\label{propo:variance}
Let $\varphi\in\mathcal{C}^\infty(\St,\R)$ be a bounded function, with bounded derivatives of any order. Then
\[
t\E|\overline{\mu}_t(\varphi)-\mu_\star(\varphi)|^2\underset{t\to \infty}\to\mathbf{V}_\infty(\varphi,A_\infty),
\]
where $A_\infty=\underset{t\to \infty}\lim A_t$ almost surely, see Corollary~\ref{cor:cv_A}.
\end{propo}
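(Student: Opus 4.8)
The plan is to reuse the tools built for the consistency proof — solutions of Poisson equations for the biased generators, martingale decompositions of time integrals, and the uniform ergodic and moment estimates of Proposition~\ref{prop:unif} and Theorem~\ref{th:well-posed} — and to proceed in two stages. First I would make $\mathbf{V}_\infty(\varphi,A)$ explicit for a \emph{fixed} bias $A\in\mathcal{A}$; then I would show that the adaptively biased estimator $\overline{\mu}_t(\varphi)$ has the same asymptotic mean-square error as the non-adaptive one with $A=A_\infty$, the bridge being the almost sure convergence $A_t\to A_\infty$ (in every $\mathcal{C}^k$ norm) of Corollary~\ref{cor:cv_A}.

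\textbf{Identifying the frozen variance.} For $A\in\mathcal{A}$ fixed, write $\overline{\mu}_t^A(\varphi)-\mu_\star(\varphi)=\mu_t^A(\psi_A)/\mu_t^A(1)$, where $\mu_t^A$ is the unnormalised measure underlying~\eqref{eq:mu_t^A} and $\psi_A=e^{-A\circ\xi_\St}(\varphi-\mu_\star(\varphi))$ satisfies $\mu_\star^A(\psi_A)=0$ by~\eqref{eq:mu_muA}. Using the exponential ergodicity of~\eqref{eq:BP}, uniform over $A\in\mathcal{A}$ (Proposition~\ref{prop:unif} and Proposition~\ref{propo:AuxiliaryPoisson}), the Poisson equation $L_A\phi_A=-\psi_A$ has a solution with controlled regularity and polynomial growth — in the hypoelliptic Langevin case this requires the corresponding hypoelliptic resolvent estimates. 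Itô's formula gives $\int_0^t\psi_A(X_\tau^A)\,d\tau=\phi_A(X_0^A)-\phi_A(X_t^A)+M_t^A$ with $M_t^A=\sqrt2\int_0^t\langle\Sigma^\top\nabla\phi_A(X_\tau^A),dW_\tau\rangle$; since $\E\langle M^A\rangle_t\sim 2t\,\mu_\star^A(|\Sigma^\top\nabla\phi_A|^2)$, $\mu_t^A(1)/t\to\mu_\star^A(e^{-A\circ\xi_\St})$, and the boundary term is negligible by the moment bounds, this yields
\[
\mathbf{V}_\infty(\varphi,A)=\frac{2\,\mu_\star^A\bigl(|\Sigma^\top\nabla\phi_A|^2\bigr)}{\mu_\star^A\bigl(e^{-A\circ\xi_\St}\bigr)^2}\in(0,\infty),
\]
the non-degeneracy coming from $\psi_A\not\equiv0$.

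\textbf{Transferring to the adaptive estimator.} Write $\overline{\mu}_t(\varphi)-\mu_\star(\varphi)=\mu_t(\varphi-\mu_\star(\varphi))/S_t$ with $S_t=1+\int_0^tF_\tau\circ\xi_\St(X_\tau)\,d\tau$. As in the consistency proof, $S_t/t\to\ell:=\mu_\star^{A_\infty}(F_\infty\circ\xi_\St)>0$ almost surely, with enough integrability. With $g=F_\infty\circ\xi_\St(\varphi-\mu_\star(\varphi))$, the identities $\mu_\star^{A_\infty}(dx)\propto e^{-\En(V)(x)+A_\infty(\xi_\St(x))}\lambda(dx)$ and $F_\infty=\No(e^{-A_\infty})$ give $\mu_\star^{A_\infty}(g)=0$, so $L_{A_\infty}\phi_{A_\infty}=-g$ has a good solution. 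Applying Itô to $\phi_{A_\infty}(X_t)$ \emph{along the time-dependent dynamics} of~\eqref{eq:ABP_full} and then replacing $g$ by $F_\tau\circ\xi_\St(\varphi-\mu_\star(\varphi))$, one gets $\mu_t(\varphi-\mu_\star(\varphi))=M_t^{\phi_{A_\infty}}+\mathcal{R}_t$, where $M_t^{\phi_{A_\infty}}$ is a martingale and $\mathcal{R}_t$ collects the fixed constant $\overline{\mu}_0(\varphi-\mu_\star(\varphi))$, the boundary term $\phi_{A_\infty}(X_0)-\phi_{A_\infty}(X_t)$, the frozen-generator discrepancy $\int_0^t(L_{A_\tau}-L_{A_\infty})\phi_{A_\infty}(X_\tau)\,d\tau$, and the weight-difference term $\int_0^t(F_\tau-F_\infty)\circ\xi_\St(X_\tau)(\varphi(X_\tau)-\mu_\star(\varphi))\,d\tau$. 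Since $F_\infty$ and $e^{-A_\infty}$ differ only by a multiplicative constant that cancels in the ratio, $2\mu_\star^{A_\infty}(|\Sigma^\top\nabla\phi_{A_\infty}|^2)/\ell^2=\mathbf{V}_\infty(\varphi,A_\infty)$, so $\E\langle M^{\phi_{A_\infty}}\rangle_t\sim\ell^2 t\,\mathbf{V}_\infty(\varphi,A_\infty)$. The proposition then reduces to proving $\E|\mathcal{R}_t|^2=o(t)$ (with a fourth-moment control of $M_t^{\phi_{A_\infty}}+\mathcal{R}_t$), together with the Slutsky-type replacement of $S_t$ by $\ell t$ afforded by $S_t/t\to\ell$ and $S_t\ge1+mt$.

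\textbf{Main obstacle.} The crux is the frozen-generator discrepancy in $\mathcal{R}_t$. Since $L_{A_\tau}-L_{A_\infty}$ is a first-order operator with coefficients $O(\|\partial(A_\tau-A_\infty)\|_\infty)$ (times bounded derivatives of $\xi$), its integrand is $O(\|\partial(A_\tau-A_\infty)\|_\infty(1+|X_\tau|^p))$; the a priori bound $A_t\in\mathcal{A}$ of Theorem~\ref{th:well-posed} plus the uniform moment bounds upgrade the almost sure convergence $\|\partial(A_t-A_\infty)\|_\infty\to0$ to convergence in every $L^q$, but a bare Cesàro/Minkowski estimate delivers $o(\sqrt t)$ in $L^2$ only under a \emph{rate} $\|A_t-A_\infty\|_{\mathcal{C}^1}=o(t^{-1/2})$, which is borderline. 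I would close this gap either by extracting such a rate from the stochastic-approximation structure — the feedback $A_t=-\log\overline{\K(\overline{\mu}_t)}$ makes $\|A_t-A_\infty\|_{\mathcal{C}^1}$ controlled by the $L^2$ decay of $\overline{\mu}_t-\mu_\star$ tested against the kernel functions $K(z_n,\cdot)$, which is itself the quantity under study, so a Gronwall argument closes the loop — or by a further Poisson equation in the $x$-variable applied to the discrepancy term, which turns it into a stochastic integral with small integrand whose $L^2$ size is governed by $\int_0^t\|A_\tau-A_\infty\|_{\mathcal{C}^1}^2\,d\tau$, only logarithmic in $t$ under a $t^{-1/2}$ rate and hence $o(t)$. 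The weight-difference term is handled identically with $\|F_\tau-F_\infty\|_\infty$ in place of $\|A_\tau-A_\infty\|_{\mathcal{C}^1}$, and the boundary term is negligible thanks to $\sup_t\E|X_t|^k<\infty$ and the polynomial growth of $\phi_{A_\infty}$.
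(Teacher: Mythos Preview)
Your overall strategy is sound and your identification of $\mathbf{V}_\infty(\varphi,A)$ for fixed $A$ matches the paper's (Section~\ref{sec:proof_variance_A}). The substantive difference is in how you handle the adaptive estimator: you freeze the Poisson solution at $A_\infty$ and apply It\^o's formula to $\phi_{A_\infty}(X_t)$, which creates the frozen-generator discrepancy $\int_0^t(L_{A_\tau}-L_{A_\infty})\phi_{A_\infty}(X_\tau)\,d\tau$ that you correctly flag as the main obstacle. The paper instead reuses the \emph{time-dependent} Poisson solution $\Psi(A_t,\cdot)$ of~\eqref{eq:PhiPsi}, already built for the consistency proof. With that choice there is no generator discrepancy at all: the extra term produced by It\^o is $\int_0^t\partial_\tau\Phi(\tau,X_\tau)\,d\tau$, and Proposition~\ref{propo:Poisson_2}(iii) gives the \emph{pointwise, deterministic} bound $|\partial_t\Psi(A_t,x)|\le C(1+|x|^p)/(1+t)$, coming directly from the ODE~\eqref{eq:ODE} satisfied by $\overline{\mu}_t$. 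Hence this remainder has $L^2$ norm $O(\log t/t)$ and contributes $O((\log t)^2/t)$ to $t\,\E|\overline{\mu}_t(\varphi)-\mu_\star(\varphi)|^2$; only the stochastic integral survives, its quadratic variation involves $|\Sigma^\star\nabla_x\Psi(A_\tau,X_\tau)|^2$, and one passes to $A_\infty$ by a Ces\`aro argument together with a second application of Lemma~\ref{lem:epsilon} to the test function $|\Sigma^\star\nabla_x\Psi(A_\infty,\cdot)|^2\in\mathcal{C}$.

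Your proposed fixes for the discrepancy are fragile. The bootstrap route needs a quantitative $L^2$ rate on $\|A_t-A_\infty\|_{\mathcal C^1}$; the only available source is Lemma~\ref{lem:epsilon} applied to the kernel functions $K(z,\xi_\St(\cdot))$, which itself relies on the time-dependent Poisson solution, so you would be importing the paper's device through the back door. Even granting $\E\|A_\tau-A_\infty\|_{\mathcal C^1}^2=O(1/\tau)$, the Cauchy--Schwarz bound on the squared discrepancy is $t\int_0^t O(1/\tau)\,d\tau=O(t\log t)$, not $o(t)$. The ``further Poisson equation'' route has a centering problem --- $(L_{A_\tau}-L_{A_\infty})\phi_{A_\infty}$ need not have zero mean under $\mu_\star^{A_\tau}$ or $\mu_\star^{A_\infty}$ --- and once you let the auxiliary solution depend on $\tau$ to absorb this, It\^o's formula produces a $\partial_\tau$-term and you are back to the paper's construction. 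In short, working with $\Psi(A_t,\cdot)$ rather than $\Psi(A_\infty,\cdot)$ is not cosmetic: it is precisely what converts your hard discrepancy into the soft bound~\eqref{eq:propo_Poisson_2_derivative}.
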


As already explained, the asymptotic mean-square error for the adaptive version is the same as for the non-adaptive version, where the bias is chosen as $A=A_\infty$. Note that the dependence of $\mathbf{V}_\infty(\varphi,A)$ with respect to $A$ depends a lot on the choice of the function $\varphi$; therefore no optimality result is stated.

The proof of Proposition~\ref{propo:variance} is postponed to Section~\ref{sec:proof_variance}; explicit expressions for $\mathbf{V}_\infty(\varphi,A)$, in terms of the solutions of Poisson equations, are given there.

\section{Proof of Theorem~\ref{th:cv_as_phi}}\label{sec:proof_consistency}

The aim of this section is to provide a detailed proof of Theorem~\ref{th:cv_as_phi}.

First, in Sections~\ref{sec:time_change} and~\ref{sec:ODE}, we present the strategy, and in particular we establish a connexion with the analysis of self-interacting diffusions from~\cite{BLR02}, and more generally of stochastic algorithms, see~\cite{B99},~\cite{BMP90},~\cite{D97},~\cite{KY03}. More precisely, Section~\ref{sec:time_change} presents a (random) change of time variable, $s=\theta(t)$, which transforms the weighted empirical distributions $\overline{\mu}_t$ associated with the process $X_t$, into non-weighted empirical distributions $\overline{\nu}_s$ associated with a process $Y_s$, with modified dynamics. In Section~\ref{sec:ODE}, we explain how the so-called ODE method can be exploited: the asymptotic behavior of $\overline{\nu}_s$, when $s\to \infty$, is related to the behavior of a differential equation of the type $\dot{\nu}=-\nu+\Pi(\nu)$. A crucial result, Proposition~\ref{propo:ergo_frozen}, states that $\Pi(\nu)=\mu_\star$ is a constant mapping, and the dynamics of the differential equation above is extremely simple.

The analysis is thus expected to be much simpler than in~\cite{BLR02}. Indeed, in Section~\ref{sec:proof_decomp_error}, we directly prove the almost sure convergence of $\overline{\mu}_t(\varphi)-\mu_\star(\varphi)$ to $0$ when $t\to \infty$. Results concerning Poisson equations are stated, their proofs being postponed to Section~\ref{sec:appendix_finie}.

Even if it is not explictly used in the technical part of the proof of Theorem~\ref{th:cv_as_phi}, the description of the change of time variable strategy is included for pedagogical purpose. Moreover, in our opinion, it is an elegant way to justify the consistency of the approach. Moreover, it may be a useful strategy in other similar situations. Readers only interested in the proof of Theorem~\ref{th:cv_as_phi} may skip Sections~\ref{sec:time_change} and~\ref{sec:ODE} -- except for Proposition~\ref{propo:ergo_frozen} which is used in the sequel.

\subsection{Approach from a stochastic approximation perspective}\label{sec:SA_perspective}

\subsubsection{Change of time variable}\label{sec:time_change}

In this section, we introduce a random change of time variable, and describe some of its nice properties. This is only a mathematical tool, and does not need to be performed in practice when implementing the method. In addition, as explained above, this change of variable has only a pedagogical role, and will not be used in the technical details of the proof.

Consider the solution of the ABP system~\eqref{eq:ABP_full}. Then the mapping $t\mapsto \overline{\mu}_t\in\mathcal{P}(\St)$ is the unique solution of the following Ordinary Differential Equation (ODE)
\begin{equation}\label{eq:ODE}
\frac{d\overline{\mu}_t}{dt}=\frac{\theta'(t)}{1+\theta(t)}\bigl(\delta_{X_t}-\overline{\mu}_t\bigr),\quad \theta(t)=\int_{0}^{t}F_\tau\bigl(\xi_\St(X_\tau)\bigr)d\tau.
\end{equation}

The ODE~\eqref{eq:ODE} is interpreted in the following weak sense: for every bounded continuous test function $\varphi:\St\to \R$, the real-valued function $t\mapsto \overline{\mu}_t(\varphi)=\int_{\St}\varphi d\overline{\mu}_t\in \R$ is the unique solution of the differential equation
\[
\frac{d\overline{\mu}_t(\varphi)}{dt}=\frac{\theta'(t)}{1+\theta(t)}\bigl(\varphi(X_t)-\overline{\mu}_t(\varphi)\bigr),
\]
with the initial condition $\overline{\mu}_0(\varphi)$.

Define the measure $\mu_t=\overline{\mu}_0+\int_{0}^{t}F_\tau\bigl(\xi_\St(X_\tau)\bigr)\delta_{X_\tau}d\tau$. Then observe that $\theta(t)=\mu_t(1)$ can be interpreted as a normalizing constant.

The presence of the random variable $\theta(t)$ in the ODE~\eqref{eq:ODE} suggests that the analysis will be not trivial. However, we can remove this quantity thanks to a change of time variable. Simultaneously, this procedure removes the weights in the definition of the measure $\overline{\mu}_t$, and the dynamics of the stochastic process $X_t$ is modified.

\bigskip

Thanks to Theorem~\ref{th:well-posed}, there exist two non-random real numbers $0<m\le M$ such that almost surely $\theta'(t)=F_t\bigl(\xi_\St(X_t)\bigr)\in[m,M]$ for all $t\ge 0$. Moreover, $\theta(0)=0$, and $\theta(t)\ge mt\underset{t\to \infty}\to\infty$. As a consequence, almost surely, $\theta:[0,\infty)\to [0,\infty)$ is a $\mathcal{C}^1$-diffeomorphism, with inverse denoted by $\theta^{-1}$. Define, for every $s\ge 0$, $\tilde{W}(s)=\int_{0}^{\theta^{-1}(s)}\sqrt{\theta(t)}dW(t)$. Note that for every $s\ge 0$, $\theta^{-1}(s)=\inf\left\{t\ge 0~;~\theta(t)\ge s\right\}$ is a bounded stopping time, associated with the filtration generated by the Wiener process $W$. Then, it is straightforward to check that $\bigl(\tilde{W}(s)\bigr)_{s\ge 0}$ is a standard Wiener process on $\St$.

We introduce the following system:
\begin{equation}\label{eq:ABP_time_changed}
\begin{cases}
dY_s=\Dr\bigl(V,B_s\bigr)(Y_s)\frac{1}{G_s(\xi_\St(Y_s))}ds+\sqrt{\frac{2}{G_s(\xi_\St(Y_s))}}\Sigma d\tilde{W}_s,\\
\overline{\nu}_{s}=\frac{1}{1+s}\bigl(\overline{\mu}_0+\int_{0}^{s}\delta_{Y_\sigma}d\sigma\bigr),\\
G_s=\No\bigl(\K(\overline{\nu}_s)\bigr),\\
B_s=-\log\bigl(\overline{G}_s\bigr).
\end{cases}
\end{equation}

Then the following identities are satisfied almost surely:
\begin{equation}\label{eq:change}
\begin{cases}
X_{t}=Y_{\theta(t)} \quad,\quad \overline{\mu}_t=\overline{\nu}_{\theta(t)} \quad,\quad F_t=G_{\theta(t)} \quad,\quad A_t=B_{\theta(t)}\quad,\quad \forall~t\ge 0\\
Y_s=X_{\theta^{-1}(s)} \quad,\quad \overline{\nu}_s=\overline{\mu}_{\theta^{-1}(s)} \quad,\quad G_s=F_{\theta^{-1}(s)} \quad,\quad B_s=A_{\theta^{-1}(s)}\quad,\quad \forall~s\ge 0.
\end{cases}
\end{equation}

The system~\eqref{eq:ABP_time_changed} may thus be considered as the time-changed version of the original ABP system~\eqref{eq:ABP_full}, with the new time variable $s=\theta(t)$, and the new unknowns $Y_s$, $\overline{\nu}_s$, $G_s$ and $B_s$, replacing $X_t$, $\overline{\mu}_t$, $F_t$ and $A_t$.

Observe that, in~\eqref{eq:ABP_time_changed}, the weight $F_t\bigl(\xi_\St(X_t)\bigr)=G_s\bigl(\xi_\St(Y_s)\bigr)$ does not appear anymore in the definition of the measure $\overline{\nu}_s$. Instead, the weight appears in the dynamics of the diffusion process~$\bigl(Y_s\bigr)_{s\ge 0}$. In terms of new variables, the ODE~\eqref{eq:ODE} has a simpler formulation:
\begin{equation}\label{eq:ODE_nu}
\frac{d\overline{\nu}_s}{ds}=\frac{1}{1+s}\bigl(\delta_{Y_s}-\overline{\nu}_s\bigr).
\end{equation}

\bigskip

We are interested in the convergence of $\overline{\mu}_t$ (or $\overline{\mu}_t(\varphi)$) when $t\to \infty$. Since $\overline{\mu}_t=\overline{\nu}_{\theta(t)}$, and $\theta(t)\underset{t\to \infty}\to \infty$ almost surely, the asymptotic behavior ($s\to \infty$) of $\overline{\nu}_s$ needs to be analyzed. In the remainder of this section, we work only with the system~\eqref{eq:ABP_time_changed}, and consider $s$ as the natural (but fictive in practice) time variable. Observe that proving Theorem~\ref{th:cv_as_phi} is equivalent to proving that
\[
\overline{\nu}_s(\varphi)\underset{s\to \infty}\to \mu_\star(\varphi)~,~\text{almost surely},
\]
which is done in Section~\ref{sec:ODE} using the ODE method.

\subsubsection{Consistency via the ODE method}\label{sec:ODE}

Thanks to the change of time variable $s=\theta(t)$ introduced above, the structure of the system~\eqref{eq:ABP_time_changed} is closer to the formulation of self-interacting diffusions (see~\cite{BLR02} for instance), depending on the normalized occupation measure, than for the initial system~\eqref{eq:ABP_time_changed}. However, in the specific situation considered in the present article, arguments need to be modified, in particular the coupling of the evolutions of the diffusion process and of the empirical distributions does not have the same structure (here it depends on the kernel $K$).

\bigskip

Thanks to the ODE~\eqref{eq:ODE_nu}, observe that there is an asymptotic time scale separation (in the limit $s\to \infty$) between slow variables $\overline{\nu}_s$, $G_s$ and $B_s$, and fast variables $Y_s$. It is reasonable to focus on the asymptotic behavior of the diffusion process when the other variables are frozen; when its unique invariant distribution (in general depending on the frozen variables) is introduced in place of the Dirac mass in~\eqref{eq:ODE_nu}, a limit ODE is obtained: the rationale behind the ODE method is that its asymptotic behavior provides information on the asymptotic behavior of the solution of~\eqref{eq:ODE_nu}.

The ODE method allows us to make rigorous the discussion above, and to identify the appropriate limit ODE. In this article, one of the main specific properties is that the invariant distribution of the fast equation with frozen variables is equal to $\mu_\star$, the target probability distribution, and thus does not depend on the frozen variables.

\begin{rem}
The asymptotic time scale separation (when $t\to \infty$) between slow variables $\overline{\mu}_t$, $F_t$ and $A_t$, and the fast variable $X_t$, already appears in the original system~\eqref{eq:ABP_simple}. The change of time variable $s=\theta(t)$ allows us to remove the random quantity $\theta(t)$, and to identify the correct limit equation for the application of the ODE method.
\end{rem}

\bigskip

Precisely, for every $G\in\mathcal{C}^\infty(\Ma_\md,\R)\cap\mathcal{C}^0(\Ma_\md,(0,\infty))$, let $\bigl(Y_s^{G}\bigr)_{s\ge 0}$ denote the diffusion process which is the unique solution of
\begin{equation}\label{eq:frozen}
dY_s^G=\frac{\Dr\bigl(V,B\bigr)(Y_s^G)}{G\bigl(\xi_\St(Y_s^G)\bigr)}ds+\sqrt{\frac{2}{G\bigl(\xi_\St(Y_s^G)\bigr)}}\Sigma d\tilde{W}_s,
\end{equation}
where $B=-\log(\overline{G})$.

\begin{propo}\label{propo:ergo_frozen}
For every $G\in\mathcal{C}^\infty(\Ma_\md,\R)\cap\mathcal{C}^0(\Ma_\md,(0,\infty))$, the unique invariant probability distribution for~\eqref{eq:frozen} is equal to $\mu_\star$.
\end{propo}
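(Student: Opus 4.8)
The plan is to recognize the frozen dynamics~\eqref{eq:frozen} as a deterministic rescaling of the infinitesimal generator of the non-adaptively biased dynamics~\eqref{eq:BP} with fixed bias $A = B := -\log(\overline{G})$, and to transport the explicit invariant measure $\mu_\star^B$ through this rescaling.

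First I would identify the generator. Write $L^B$ for the generator of the biased diffusion~\eqref{eq:BP} with $A = B$. In~\eqref{eq:frozen} the factor $G(\xi_\St(\cdot))^{-1}$ multiplies the drift, and it multiplies the whole diffusion term (through $\sqrt{2/G\circ\xi_\St}\,\Sigma$, the matrix $\Sigma\Sigma^\ast$ being unchanged), so the generator of $\bigl(Y_s^G\bigr)_{s\ge 0}$ is $L^G = \frac{1}{G\circ\xi_\St}\,L^B$. Because $\Ma_\md$ is compact and $G$ is smooth and strictly positive, $G\circ\xi_\St$ and its reciprocal are bounded above and below by positive constants with bounded derivatives; hence $L^G$ and $L^B$ have the same core, so the manipulations below are legitimate, and $B$ is smooth with bounded derivatives. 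Consequently Property~\ref{prop:unif} applies (with $\mathcal{A}$ a singleton containing $B$), and together with Appendix~\ref{sec:appendix_finie} this ensures that $\mu_\star^B$ given by~\eqref{eq:muA} is the unique invariant probability distribution of $L^B$.

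The key step is the adjoint identity: if $m$ is invariant for $L^B$, then for any strictly positive weight $g$ the (normalized) measure $g\,m$ is invariant for $\frac{1}{g}L^B$, since $\int \frac{1}{g}L^B f\, d(g\,m) = \int L^B f\, dm = 0$ for $f$ in the core. Taking $g = G\circ\xi_\St$ shows that the normalization of $(G\circ\xi_\St)\,\mu_\star^B$ is invariant for $L^G$. It then remains to compute this measure: by~\eqref{eq:En} and~\eqref{eq:muA}, $\mu_\star^B(dx)$ is proportional to $e^{B(\xi_\St(x))}\,e^{-\En(V)(x)}\,\lambda(dx)$, while $B = -\log(\overline{G})$ gives $e^{B(z)} = \bigl(\int_{\Ma_\md} G\,d\pi\bigr)/G(z)$, so that $G(\xi_\St(x))\,e^{B(\xi_\St(x))}$ equals the constant $\int_{\Ma_\md} G\,d\pi$; hence $(G\circ\xi_\St)\,\mu_\star^B(dx)$ is proportional to $e^{-\En(V)(x)}\,\lambda(dx)$, that is, to $\mu_\star$.

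For uniqueness I would observe that $L^G$ is again a (hypo)elliptic generator whose drift is $G(\xi_\St(\cdot))^{-1}$ times a confining drift, with $G(\xi_\St(\cdot))^{-1}$ bounded below by a positive constant, so the ergodicity arguments of Appendix~\ref{sec:appendix_finie} apply verbatim and yield a unique invariant probability distribution, necessarily $\mu_\star$. An equivalent and more transparent route realizes $Y^G_s$ as $X^B_{\alpha(s)}$ for the random time change $\alpha'(s) = G(\xi_\St(X^B_{\alpha(s)}))^{-1}$, rewrites $\frac{1}{s}\int_0^s \varphi(Y^G_\sigma)\,d\sigma$ as $\bigl(\int_0^T \varphi(X^B_t)G(\xi_\St(X^B_t))\,dt\bigr)\big/\bigl(\int_0^T G(\xi_\St(X^B_t))\,dt\bigr)$ with $T = \alpha(s)\to\infty$, and invokes ergodicity of $X^B$ with respect to $\mu_\star^B$ to obtain almost sure convergence to $\mu_\star^B(\varphi\,G\circ\xi_\St)/\mu_\star^B(G\circ\xi_\St)$, which the previous computation identifies with $\mu_\star(\varphi)$. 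The only genuinely delicate points are the bookkeeping around domains and cores in the adjoint identity and checking that the confinement of the modified drift is inherited; both are routine given the compactness of $\Ma_\md$ and Property~\ref{prop:unif}, and everything else is an explicit computation.
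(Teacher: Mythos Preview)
Your proof is correct and follows essentially the same route as the paper: identify the generator of the frozen dynamics as $\frac{1}{G\circ\xi_\St}\mathcal{L}_X^B$ (the paper's~\eqref{eq:generators}), deduce that $(G\circ\xi_\St)\,\mu_\star^B$ is invariant, and compute via~\eqref{eq:En} and~\eqref{eq:muA} that this is proportional to $\mu_\star$. The paper makes the preliminary reduction of replacing $G$ by $e^{-B}$ (a harmless multiplicative constant) and is terser about uniqueness, but the substance is identical; your additional time-change paragraph is a nice alternative not present in the paper.
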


\begin{proof}
First note that $G=\frac{\overline{G}}{\no(\overline{G})}=\frac{\exp(-B)}{\no(\overline{G})}$ is equal to $\exp(-B)$ up to a multiplicative constant, and thus a probability distribution $\mu$ is invariant for~\eqref{eq:frozen} if and only if it is invariant for
\begin{equation}\label{eq:Y^B}
d\mathcal{Y}_s^B=\frac{\Dr\bigl(V,B\bigr)(\mathcal{Y}_s^B)}{e^{-B(\xi_\St(\mathcal{Y}_s^B))}}ds+\sqrt{\frac{2}{e^{-B(\xi_\St(\mathcal{Y}_s^B))}}}\Sigma d\tilde{W}_s.
\end{equation}
Let $\mathcal{L}_{\mathcal{Y}}^B$ denote the associated infinitesimal generator: then for every function $\varphi\in\mathcal{C}^{\infty}(\St,\R)$,
\begin{equation}\label{eq:generators}
\mathcal{L}_{\mathcal{Y}}^B\varphi(y)=\frac{1}{e^{-B(\xi_\St(y))}}\mathcal{L}_X^B\varphi(y),
\end{equation}
where $\mathcal{L}_X^B$ is the infinitesimal generator of the biased diffusion process $X^B$ defined by~\eqref{eq:BP}, with $A=B$.

Since the unique invariant probability distribution of~\eqref{eq:BP} with $A=B$ is $\mu_\star^B$, the unique invariant probability distribution of~\eqref{eq:frozen} is proportional to
\begin{align*}
e^{-B(\xi_\St(y))}\mu_\star^B(dy)&=e^{-B(\xi_\St(y))}\frac{\exp\bigl(-\En(V,B)(y)\bigr)}{Z^B}\nuSt(dy)\\
&=\frac{\exp\bigl(-\En(V)(y)\bigr)}{Z^B}\nuSt(dy)=\frac{Z^0}{Z^B}\mu_\star(dy),
\end{align*}
using~\eqref{eq:muA} (expression of expression of $\mu_\star^B$) and~\eqref{eq:En} (expression of $\En(V,B)$). Identifying the normalizing constants then concludes the proof of Proposition~\ref{propo:ergo_frozen}.
\end{proof}

Following the ODE method leads to study the following equation:
\begin{equation}\label{eq:ODE_gamma}
\frac{d\gamma_s}{ds}=\frac{1}{1+s}\bigl(\Pi(\gamma_s)-\gamma_s\bigr)=\frac{1}{1+s}\bigl(\mu_\star-\gamma_s\bigr).
\end{equation}
Indeed, thanks to Proposition~\ref{propo:ergo_frozen}, $\Pi(\gamma)=\mu_\star$ is the unique invariant distribution of~\eqref{eq:frozen}, where $G=\K(\gamma)$. This property justifies the consistency of the approach, {\it i.e.} the almost sure convergence of $\overline{\nu}_s$ to $\mu_\star$. Indeed, it is straightforward to check that, for any initial condition $\gamma_0\in\mathcal{P}(\St)$, one has
\[
\gamma_s=\frac{1}{1+s}\bigl(\overline{\gamma}_0+s\mu_\star)\underset{s\to \infty}\to \mu_\star.
\]
Moreover, a rigorous connexion between the asymptotic behaviors of $\overline{\nu}_s$ and of $\gamma_s$ may be stated for instance using the notion of asymptotic pseudo-trajectories (see~\cite{B99},~\cite{BLR02}); or by proving direct estimates on the $L^p$ norm of the random variable $\overline{\nu}_s(\varphi)-\mu_\star(\varphi)$.

In Section~\ref{sec:proof_decomp_error} below, instead, we prove directly estimates on the $L^p$ norm of the random variable $\overline{\mu}_t(\varphi)-\mu_\star(\varphi)$; indeed, thanks to Proposition~\ref{propo:ergo_frozen}, the situation is rather simple and the error is analyzed using straightforward computations, combined with a powerful auxiliary tool: the use of the solutions of associated Poisson equation.

\subsection{Analysis of the error and convergence}\label{sec:proof_decomp_error}

\subsubsection{The error in terms of the solutions of Poisson equations}

In order to prove that
\[
\overline{\mu}_t(\varphi)-\mu_\star(\varphi)
=\frac{\int_0^tF_\tau(\xi_\St(X_\tau))
\bigl[\varphi(X_\tau)-\mu_\star(\varphi)\bigr]d\tau}{1+\int_0^tF_\tau(\xi_\St(X_\tau))d\tau}
\]
converges to $0$ when $t\to\infty$, it is convenient to introduce a family of Poisson equations depending on the integrand on the numerator. Let $\Phi:(s,y)\in[1,\infty)\times\St\mapsto \Phi(s,y)\in\R$ be a $\mathcal{C}^{1,2}$ function, {\it i.e.} of class $\mathcal{C}^1$ with respect to the variable $s$ and of class $\mathcal{C}^2$ with respect to the variable $y$, with bounded associated derivatives. The application of It\^o's formula yields the equality
\begin{align*}
\Phi(t,X_t)-\Phi(0,X_0)&=\int_{0}^{t}\mathcal{L}_X^{A_\tau}(\tau,X_\tau)d\tau+\int_{0}^{t}\frac{\partial \Phi}{\partial \tau}(\tau,X_\tau)d\tau\\
&~+\int_0^{t}\sqrt{2}\langle \nabla \Phi(\tau,X_\tau),\Sigma dW(\tau)\rangle,
\end{align*}
where $\mathcal{L}_X^A$ is the infinitesimal generator of the biased diffusion process $X^A$,see~\eqref{eq:BP}.

Assume that the function $\Phi$ satisfies, for all $t\ge 0$, $x\in\St$
\begin{equation}\label{eq:Poisson_like}
\mathcal{L}_X^{A_t}\Phi(t,X_t)=F_t(\xi_\St(x))
\bigl[\varphi(x)-\mu_\star(\varphi)\bigr];
\end{equation}
then one obtains
\begin{align*}
\overline{\mu}_t(\varphi)-\mu_\star(\varphi)&
=\frac{\Phi(t,X_t)-\Phi(0,X_0)}{1+\int_0^tF_\tau(\xi_\St(X_\tau))d\tau}\\
&~-\frac{\int_0^{t}\sqrt{2}\langle \nabla \Phi(\tau,X_\tau),\Sigma dW(\tau)\rangle}{1+\int_0^tF_\tau(\xi_\St(X_\tau))d\tau}\\
&~-\frac{\int_{0}^{t}\frac{\partial \Phi}{\partial \tau}(\tau,X_\tau)d\tau}{1+\int_0^tF_\tau(\xi_\St(X_\tau))d\tau}.
\end{align*}
Recall that, $1+\int_{0}^{t}F_\tau(\xi_\St(X_\tau))d\tau\ge mt$, for all $t\ge 0$, almost surely, thanks to Theorem~\ref{th:well-posed}, with $m>0$. Convergence of $\overline{\mu}_t(\varphi)-\mu_\star(\varphi)$ to $0$, in a $L^p$ sense, then follows from appropriate estimates on the function $\Phi$ and its derivatives, which are stated below.

\subsubsection{Properties of solutions of Poisson equations}

This section is devoted to the statement of the properties concerning solutions of Poisson equations which are used in the analysis. We emphasize that the estimates are uniform with respect to $A\in\mathcal{A}$, which is defined by~\eqref{eq:fonctions_F}.

The equation~\eqref{eq:Poisson_like} can be written as
\begin{equation}\label{eq:PhiPsi}
\Phi(t,x)=\frac{1}{\no(\overline{F}_t)}\Psi(A_t,x)
\end{equation}
where, for any $A\in\mathcal{A}$ (see~\eqref{eq:fonctions_F}), $\Psi(A,\cdot)$ is solution of the Poisson equation
\begin{equation}\label{eq:Poisson}
\begin{cases}
\mathcal{L}_{\mathcal{X}}^{A}\Psi(A,\cdot)=e^{-A(\xi_\St(x))}\bigl[\varphi-\mu_\star(\varphi)\bigr],\\
\int \Psi(A,\cdot) d\mu_\star^A=0.
\end{cases}
\end{equation}

Introduce the set
\begin{equation}\label{eq:C}
\mathcal{C}=\mathcal{C}_{{\rm pol}}^\infty(\St,\R)=\left\{\varphi\in\mathcal{C}^\infty(\St,\R) ~;~ \forall~k\in\N, \exists~p_k\in\N, \sup_{x\in\St}\frac{|D^k\varphi(x)|}{1+|x|^{p_k}}<\infty\right\}
\end{equation}
of functions $\varphi:\St\to \R$, of class $\mathcal{C}^\infty$, with at most polynomial growth, and all derivatives with at most polynomial growth. Note that the average $\mu_\star(\varphi)$ is well-defined, since the probability distribution $\mu_\star$ admits finite moments of any order.

We first state the following well-posedness result.
\begin{propo}\label{propo:Poisson_1}
For every $A\in\mathcal{A}$ and every $\varphi\in\mathcal{C}$, there exists a unique solution $\Psi(A,\cdot)\in\mathcal{C}$ of the Poisson equation~\eqref{eq:Poisson}.
\end{propo}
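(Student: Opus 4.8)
The plan is to construct $\Psi(A,\cdot)$ by the standard probabilistic formula involving the resolvent of the biased semigroup, and to make every estimate \emph{uniform over} $A\in\mathcal{A}$ by invoking Property~\ref{prop:unif}. First I would check the solvability (centering) condition. Writing $g_A(x)=e^{-A(\xi_\St(x))}\bigl[\varphi(x)-\mu_\star(\varphi)\bigr]$ for the right-hand side of~\eqref{eq:Poisson}, one notes that $g_A\in\mathcal{C}$ with polynomial growth and derivative bounds uniform in $A\in\mathcal{A}$ (since $\mathcal{A}$ consists of functions with uniformly bounded derivatives of all orders, $\xi_\St$ has bounded derivatives, and $\varphi\in\mathcal{C}$, $\mu_\star$ having moments of all orders). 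The identity~\eqref{eq:mu_muA} then gives precisely $\mu_\star^A(g_A)=\mu_\star^A\bigl(\varphi e^{-A\circ\xi_\St}\bigr)-\mu_\star(\varphi)\,\mu_\star^A\bigl(e^{-A\circ\xi_\St}\bigr)=0$, which is the obstruction one must clear.

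Next I would define the candidate $\Psi(A,x)=-\int_0^\infty \bigl(P_t^A g_A\bigr)(x)\,dt$, where $(P_t^A)_{t\ge0}$ is the Markov semigroup of the biased dynamics~\eqref{eq:BP}, and show this integral converges with quantitative control. The key input is Property~\ref{prop:unif}, which furnishes Lyapunov/drift conditions on $\Dr(V,A)$ and moment bounds for $\mu_\star^A$ that are uniform over $A\in\mathcal{A}$; combining these with a minorization argument on small sets in the elliptic cases (Brownian, extended dynamics) and with a hypocoercive/Meyn--Tweedie-type argument in the hypoelliptic Langevin case, one obtains constants $C,c\in(0,\infty)$ and an exponent $p$, all independent of $A\in\mathcal{A}$, such that $\bigl|\bigl(P_t^A g_A\bigr)(x)\bigr|\le C e^{-ct}(1+|x|^{2p})$. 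This makes the integral absolutely convergent, shows $\Psi(A,\cdot)$ has polynomial growth, and yields $P_t^A g_A\to 0$ pointwise as $t\to\infty$. Regularity ($\Psi(A,\cdot)\in\mathcal{C}^\infty$) follows from interior parabolic Schauder estimates in the elliptic cases and from H\"ormander's theorem in the Langevin case, while the polynomial growth of all derivatives is obtained by differentiating the semigroup representation and using the variational equation together with the at-most-polynomial growth of the derivatives of $V$ and $\xi$; hence $\Psi(A,\cdot)\in\mathcal{C}$. The Poisson identity itself is then $\mathcal{L}_X^{A}\Psi(A,\cdot)=-\int_0^\infty\frac{d}{dt}\bigl(P_t^A g_A\bigr)\,dt=g_A-\lim_{t\to\infty}P_t^A g_A=g_A$, the interchange being legitimate by the bounds above, and the normalization $\int\Psi(A,\cdot)\,d\mu_\star^A=-\int_0^\infty\mu_\star^A(g_A)\,dt=0$ holds automatically by invariance.

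For uniqueness, suppose $\Psi_1,\Psi_2\in\mathcal{C}$ both solve~\eqref{eq:Poisson}; then $h=\Psi_1-\Psi_2\in\mathcal{C}$ satisfies $\mathcal{L}_X^{A}h=0$ and $\mu_\star^A(h)=0$. By It\^o's formula and the uniform moment bounds, $\bigl(h(X_t^A)\bigr)_{t\ge0}$ is a genuine martingale, so $P_t^A h=h$ for every $t$; letting $t\to\infty$ in the ergodic estimate of the previous step (valid for $h$ of polynomial growth) gives $h=\mu_\star^A(h)=0$.

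I expect the main obstacle to be the second step: proving exponential ergodicity of~\eqref{eq:BP} with constants \emph{uniform in} $A\in\mathcal{A}$, especially for the hypoelliptic Langevin dynamics on the non-compact space $E_d=\R^d$, where the $A$-uniform Lyapunov structure of Property~\ref{prop:unif} must be combined with a hypocoercive argument and with uniform control of the derivatives of the flow. The required statements, and in particular the uniform-in-$A$ exponential convergence to equilibrium referred to above, are exactly those collected and proved in Section~\ref{sec:appendix_finie}, on which this proof relies.
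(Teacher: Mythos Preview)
Your proposal is correct and follows essentially the same route as the paper: define $g_A=\overline{\varphi}^A=e^{-A\circ\xi_\St}\bigl(\varphi-\mu_\star(\varphi)\bigr)$, verify the centering condition $\mu_\star^A(g_A)=0$ via~\eqref{eq:mu_muA}, and set $\Psi(A,x)=-\int_0^\infty P_t^A g_A(x)\,dt$, with convergence and regularity resting on the uniform-in-$A$ exponential ergodicity established in Section~\ref{sec:appendix_finie}. Your write-up is in fact more detailed than the paper's sketch, and you correctly identify the uniform exponential convergence (especially in the hypoelliptic Langevin case) as the main technical ingredient, which is precisely what Proposition~\ref{propo:AuxiliaryPoisson} provides.
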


We only provide a sketch of proof. Define the auxiliary function $\overline{\varphi}^A=e^{-A\circ\xi_\St}\bigl(\varphi-\mu_\star(\varphi)\bigr)$, and note that $\overline{\varphi}^A\in\mathcal{C}$. Thanks to~\eqref{eq:mu_muA}, the centering condition
\[
\int \overline{\varphi}^A d\mu_\star^A=\int \bigl[\varphi-\mu_\star(\varphi)\bigr]d\mu_\star^0=0.
\]
is satisfied. It is then well-known that the unique solution of the Poisson equation~\eqref{eq:Poisson} is given by
\begin{equation}\label{eq:Psi_c}
\Psi(A,x)=-\int_{0}^{\infty}\E_x\bigl[\overline{\varphi}^A(X_t^A)\bigr]dt,
\end{equation}
where $X^A$ is the biased process given by~\eqref{eq:BP}.

In Section~\ref{sec:convergence_details} below, bounds on $\Psi(A_t,\cdot)$, and its derivatives are required. The analysis is performed using the two following claims. On the one hand, thanks to Proposition~\ref{prop:unif}, almost surely $A_t\in\mathcal{A}$, where $\mathcal{A}$ is defined by~\eqref{eq:fonctions_F}. On the other hand, Proposition~\ref{propo:Poisson_2} states estimates which are uniform for $A\in\mathcal{A}$. The proof is postponed to Appendix~\ref{sec:appendix_finie}.

\begin{propo}\label{propo:Poisson_2}
Let $\varphi\in\mathcal{C}$, of class $\mathcal{C}^\infty$, with at most polynomial growth.

There exist $C\in(0,\infty)$ and $p\in\N^\star$, such that the following results hold true.
\begin{itemize}
\item[(i)] For every $A\in\mathcal{A}$ and every $x\in \St$
\begin{equation}\label{eq:propo_Poisson_2_Linfty}
\big|\Psi(A,x)\big|\le C(1+|x|^p).
\end{equation}
\item[(ii)] For every $A\in\mathcal{A}$ and every $x\in \St$
\begin{equation}\label{eq:propo_Poisson_2_Dy}
\big|\nabla_x\Psi(A,x)\big|\le C(1+|x|^p).
\end{equation}
\item[(iii)] The function $(t,x)\in [0,\infty)\times \St\mapsto \Psi(A_t,x)$ is of class $\mathcal{C}^{1,2}$, and for every $x\in\St$ and every $t\ge 0$, almost surely
\begin{equation}\label{eq:propo_Poisson_2_derivative}
\big|\frac{\partial \Psi(A_t,x)}{\partial t}\big|\le \frac{C(1+|x|^p)}{1+t},
\end{equation}
where $\bigl(A_t\bigr)_{t\ge 0}$ is the $\mathcal{A}$-valued process defined in~\eqref{eq:ABP_full}.
\end{itemize}
\end{propo}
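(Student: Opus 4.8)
The plan is to exploit the representation formula~\eqref{eq:Psi_c}, namely $\Psi(A,x)=-\int_0^\infty \E_x[\overline{\varphi}^A(X_t^A)]\,dt$, together with the uniform (in $A\in\mathcal{A}$) exponential ergodicity of the biased dynamics~\eqref{eq:BP}. The essential input is Proposition~\ref{prop:unif}: the drift $\Dr(V,A)=\Dr(V-A\circ\xi)$ satisfies a Lyapunov-type condition with constants that do not depend on $A\in\mathcal{A}$, precisely because $A$ ranges over a set on which $\min A$, $\max A$ and all $\sup|\partial^k A|$ are bounded, while $\xi$ has bounded derivatives. This gives, uniformly in $A\in\mathcal{A}$, a Lyapunov function of the form $\mathcal{V}_p(x)=1+|x|^{2p}$ (or, in the hypoelliptic Langevin case, a modified function involving $V$ and $|p|^2$ following the Talay / Mattingly--Stuart--Higham construction) with $\mathcal{L}_{\mathcal{X}}^A\mathcal{V}_p\le -c_p\mathcal{V}_p+C_p$, and hence a weighted exponential contraction $\|P_t^A f - \mu_\star^A(f)\|_{\mathcal{V}_p}\le Ce^{-\rho t}\|f\|_{\mathcal{V}_p}$ with $C,\rho$ independent of $A$ (this is the content of Proposition~\ref{propo:AuxiliaryPoisson} referenced in Appendix~\ref{sec:appendix_finie}).

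For item~(i): since $\overline{\varphi}^A=e^{-A\circ\xi_\St}(\varphi-\mu_\star(\varphi))$ has polynomial growth with constants uniform in $A\in\mathcal{A}$ (the factor $e^{-A\circ\xi_\St}$ is bounded above and below uniformly, as $\xi$ is valued in the compact $\Ma_\md$ and $m\le A\le M$), and since it is centered with respect to $\mu_\star^A$, the integrand $\E_x[\overline{\varphi}^A(X_t^A)]$ decays like $e^{-\rho t}(1+|x|^p)$ by the weighted contraction; integrating in $t$ yields the bound $|\Psi(A,x)|\le C(1+|x|^p)$. For item~(ii): one differentiates the semigroup representation in $x$. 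Either one commutes $\nabla_x$ with $P_t^A$ using the smoothness of the coefficients and bounds on the derivative flow $\nabla_x X_t^A$ (which satisfies a linear SDE whose coefficients are controlled uniformly in $A\in\mathcal{A}$, using $\sup_{A\in\mathcal{A}}\|\partial^k(V-A\circ\xi)\|$-type bounds on compacts and the semiconvexity of $V$), getting a bound $|\nabla_x\E_x[\overline{\varphi}^A(X_t^A)]|\le Ce^{-\rho t}(1+|x|^p)$; or, more robustly for the hypoelliptic case, one uses a Bismut--Elworthy--Li integration-by-parts formula, which is again available with constants uniform in $A$. Integrating in $t$ gives~\eqref{eq:propo_Poisson_2_Dy}.

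The main obstacle, and the part that really requires the structure of the ABP system, is item~(iii), the bound on $\partial_t\Psi(A_t,x)$. Here one must differentiate~\eqref{eq:Psi_c} with respect to the \emph{parameter} $A$ along the trajectory $t\mapsto A_t$, and the key point is the factor $\frac{1}{1+t}$ on the right-hand side. I would proceed as follows. Write $\Psi(A_t,x)$ and differentiate formally in $t$: the chain rule produces a term involving $\partial_A\Psi$ applied to $\dot A_t$, plus (through the $A$-dependence of the generator and of $\mu_\star^A$) terms of the same nature. From the ODE~\eqref{eq:ODE} governing $\overline{\mu}_t$, one has $\dot{\overline{\mu}}_t=\frac{\theta'(t)}{1+\theta(t)}(\delta_{X_t}-\overline{\mu}_t)$, so $\|\dot{\overline{\mu}}_t\|$ (in the relevant dual norm) is $O\big(\frac{1}{1+\theta(t)}\big)=O\big(\frac{1}{1+t}\big)$ almost surely, using $\theta(t)\ge mt$ from Theorem~\ref{th:well-posed}; consequently $\dot A_t=-\partial_t\log\overline{F}_t$ and all its derivatives on $\Ma_\md$ are $O\big(\frac{1}{1+t}\big)$, again using the uniform lower bound $\min\overline{F}_t\ge m>0$ and the smoothing properties of $\K$ (Assumption~\ref{ass:kernel}). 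It then remains to show that the parameter-derivative $A\mapsto\partial_A\Psi(A,\cdot)$ (a bounded linear map from the tangent directions, i.e. from $\mathcal{C}^\infty(\Ma_\md)$ equipped with finitely many $\sup$-norms on derivatives) is bounded, uniformly for $A\in\mathcal{A}$, with values in the space of functions of polynomial growth $C(1+|x|^p)$. This is done by differentiating the Poisson equation~\eqref{eq:Poisson} in $A$: the derivative $W=\partial_A\Psi(A,\cdot)[\dot A]$ solves an inhomogeneous Poisson-type equation $\mathcal{L}_{\mathcal{X}}^A W = (\text{source depending linearly on }\dot A,\ \Psi(A,\cdot),\ \nabla_x\Psi(A,\cdot))$ with an extra correction enforcing the centering, where the source has polynomial growth controlled by~\eqref{eq:propo_Poisson_2_Linfty}--\eqref{eq:propo_Poisson_2_Dy} and by $\|\dot A\|_{\mathcal{C}^1(\Ma_\md)}$; applying the representation formula once more, with the uniform weighted contraction, bounds $W$ by $C\|\dot A\|_{\mathcal{C}^1(\Ma_\md)}(1+|x|^p)$. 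Combining $\|\dot A_t\|_{\mathcal{C}^1(\Ma_\md)}\le \frac{C}{1+t}$ with this bound yields~\eqref{eq:propo_Poisson_2_derivative}. I expect the technical heart of the proof to be verifying the differentiability of $A\mapsto\Psi(A,\cdot)$ as a map into $\mathcal{C}$ and obtaining the uniform-in-$A$ estimate on its derivative; the rest is an assembly of standard ergodic estimates for (possibly hypoelliptic) diffusions, which is why the paper defers the full argument to Appendix~\ref{sec:appendix_finie} and treats the Langevin case separately.
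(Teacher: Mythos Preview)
Your proposal is correct and follows essentially the same line as the paper's proof in Appendix~\ref{sec:appendix_finie}: item~(i) via the representation~\eqref{eq:Psi_c} plus the uniform-in-$A$ exponential ergodicity of Proposition~\ref{propo:AuxiliaryPoisson}; item~(ii) via short-time derivative bounds combined with the semi-group property and a Bismut--Elworthy--Li argument for $t\ge 1$ (with the hypoelliptic Langevin case handled separately through Sobolev estimates); and item~(iii) by first showing $\sup_z|\partial^k\dot A_t(z)|\le C^{(k)}/(1+t)$ from the ODE~\eqref{eq:ODE} and Assumption~\ref{ass:kernel}, then differentiating the Poisson equation in $t$ and solving the resulting equation for $\partial_t\Psi(A_t,\cdot)$.

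The one cosmetic difference worth noting is that in item~(iii) the paper differentiates the equation written with the \emph{time-changed} generator $\mathcal{L}_{\mathcal{Y}}^{A}$ of~\eqref{eq:generators}, for which the right-hand side $\varphi-\mu_\star(\varphi)$ is $A$-independent; this makes the identity $\mathcal{L}_{\mathcal{Y}}^{A_t}\partial_t\Psi(A_t,\cdot)=-\bigl(\partial_t\mathcal{L}_{\mathcal{Y}}^{A_t}\bigr)\Psi(A_t,\cdot)$ immediate, with no contribution from differentiating the source $\overline{\varphi}^{A_t}$ or from a separate correction for the centering. Your route through $\mathcal{L}_{\mathcal{X}}^{A}$ produces an extra source term $\partial_t\overline{\varphi}^{A_t}$ and requires checking (which you correctly anticipate) that the new source is centered with respect to $\mu_\star^{A_t}$; this is automatic upon differentiating $\mu_\star^{A}(\overline{\varphi}^A)=0$, so the two computations are equivalent.
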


\subsubsection{Proof of convergence}\label{sec:convergence_details}

An approximation procedure is required to deal with the low regularity properties of the normalization operator $\no$, see Assumption~\ref{ass:normalization}. For $k\in\N$, define
\[
\Phi^{(k)}(t,x)=\frac{1}{\no^{(k)}(\overline{F}_t)}\Psi(A_t,x).
\]
Then observe that that
\begin{align*}
\overline{\mu}_t(\varphi)-\mu_\star(\varphi)&=\frac{\int_0^tF_\tau(\xi_\St(X_\tau))
\bigl[\varphi(X_\tau)-\mu_\star(\varphi)\bigr]d\tau}{1+\int_0^tF_\tau(\xi_\St(X_\tau))d\tau}\\
&=\frac{\int_0^t \frac{1}{\no(\overline{F}_\tau)}\mathcal{L}_X^{A_\tau}\Psi(A_\tau,\cdot)(X_\tau)d\tau}{1+\int_0^tF_\tau(\xi_\St(X_\tau))d\tau}\\
&=\lim_{k\to\infty}\frac{\int_0^t \frac{1}{\no^{(k)}(\overline{F}_\tau)}\mathcal{L}_X^{A_\tau}\Psi^{(k)}(A_\tau,X_\tau)d\tau}{1+\int_0^tF_\tau(\xi_\St(X_\tau))d\tau}\\
&=\lim_{k\to\infty}\frac{\int_0^t \mathcal{L}_X^{A_\tau}\Phi^{(k)}(\tau,X_\tau)d\tau}{1+\int_0^tF_\tau(\xi_\St(X_\tau))d\tau}=:\lim_{k\to \infty}\epsilon_t^{(k)}(\varphi),
\end{align*}
where the limit $k\to \infty$ is understood in an almost sure sense, thanks to Assumption~\ref{ass:normalization}, and the fact that $\overline{F}_t\in\mathcal{F}$ for all $t\ge 0$, almost surely, thanks to Theorem~\ref{th:well-posed}.

It\^o's formula can be used, since $(t,x)\mapsto \Phi^{(k)}(t,x)$ is of class $\mathcal{C}^{1,2}$ thanks to Proposition~\ref{propo:Poisson_2}. Then
\begin{align*}
\epsilon_t^{(k)}(\varphi)&
=\frac{\Phi^{(k)}(t,X_t)-\Phi^{(k)}(0,X_0)}{1+\int_0^tF_\tau(\xi_\St(X_\tau))d\tau}\\
&~-\frac{\int_0^{t}\sqrt{2}\langle \nabla \Phi^{(k)}(\tau,X_\tau),\Sigma dW(\tau)\rangle}{1+\int_0^tF_\tau(\xi_\St(X_\tau))d\tau}\\
&~-\frac{\int_{0}^{t}\frac{\partial \Phi^{(k)}}{\partial \tau}(\tau,X_\tau)d\tau}{1+\int_0^tF_\tau(\xi_\St(X_\tau))d\tau}\\
&=:\epsilon_t^{(k),1}(\varphi)+\epsilon_t^{(k),2}(\varphi)+\epsilon_t^{(k),3}(\varphi).
\end{align*}

We now prove the following result.
\begin{lemma}\label{lem:epsilon}
Let $\varphi\in\mathcal{C}$. There exists $C(\varphi)\in(0,\infty)$ such that for every $t\ge 0$ and $k\in\N$
\[
\E|\epsilon_t^{(k)}(\varphi)|^2\le \frac{C(\varphi)}{t}.
\]
\end{lemma}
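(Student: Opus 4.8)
The plan is to bound each of the three terms $\epsilon_t^{(k),1}(\varphi)$, $\epsilon_t^{(k),2}(\varphi)$, $\epsilon_t^{(k),3}(\varphi)$ in $L^2(\Omega)$ uniformly in $k$, with a bound of order $t^{-1}$ (the square will then be of order $t^{-2}$, but we only need $t^{-1}$). Throughout I would use three deterministic facts that do not depend on $k$: first, the almost sure lower bound $1+\int_0^tF_\tau(\xi_\St(X_\tau))d\tau\ge 1+mt$ from Theorem~\ref{th:well-posed}; second, the moment bounds $\sup_{t\ge 0}\E|X_t|^q<\infty$ for every $q$, also from Theorem~\ref{th:well-posed} (so in particular $\sup_t\E(1+|X_t|^p)^2<\infty$); third, the uniform-in-$A\in\mathcal{A}$ polynomial bounds on $\Psi(A,\cdot)$, $\nabla_x\Psi(A,\cdot)$ and $\partial_t\Psi(A_t,\cdot)$ from Proposition~\ref{propo:Poisson_2}, together with the fact that $A_t\in\mathcal{A}$ almost surely. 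The key point making the $k$-dependence harmless is the second condition in Assumption~\ref{ass:normalization}: since $\overline{F}_t\in\mathcal{F}$, we have $\frac{1}{\gamma_\no}m\le \no^{(k)}(\overline{F}_t)\le \gamma_\no M^{(0)}$ uniformly in $k$ and $t$, so $\Phi^{(k)}$ and $\nabla_x\Phi^{(k)}$ inherit the polynomial bounds on $\Psi$ up to a fixed multiplicative constant.

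For the first term, $|\epsilon_t^{(k),1}(\varphi)|\le \frac{|\Phi^{(k)}(t,X_t)|+|\Phi^{(k)}(0,X_0)|}{1+mt}$; using $|\Phi^{(k)}(t,x)|\le \gamma_\no m^{-1}|\Psi(A_t,x)|\le C(1+|x|^p)$ and the moment bound gives $\E|\epsilon_t^{(k),1}(\varphi)|^2\le \frac{C(\varphi)}{(1+mt)^2}\le \frac{C(\varphi)}{t}$ (for $t\ge 1$; for $t$ in a bounded set the statement is trivial since all quantities are bounded in $L^2$). For the third term, I would use Proposition~\ref{propo:Poisson_2}(iii): $|\partial_\tau\Phi^{(k)}(\tau,x)|$ is controlled by $|\partial_\tau\Psi(A_\tau,x)|$ times the bounded factor $1/\no^{(k)}(\overline F_\tau)$, plus a term coming from $\frac{d}{d\tau}\frac{1}{\no^{(k)}(\overline F_\tau)}$ times $\Psi(A_\tau,x)$ — here I would need $\no^{(k)}$ to be differentiable (Assumption~\ref{ass:normalization}, first bullet) and the time-derivative of $\overline F_t$ to decay like $1/t$; this follows from the ODE $\partial_t\overline F_t$ governed by the dynamics~\eqref{eq:ABP_full} and the factor $\theta'(t)/(1+\theta(t))$ in~\eqref{eq:ODE}, which is $O(1/t)$ since $\theta(t)\ge mt$. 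In either case $|\partial_\tau\Phi^{(k)}(\tau,X_\tau)|\le \frac{C(1+|X_\tau|^p)}{1+\tau}$, so $\big|\int_0^t\partial_\tau\Phi^{(k)}(\tau,X_\tau)d\tau\big|\le C\int_0^t\frac{1+|X_\tau|^p}{1+\tau}d\tau$, and taking $L^2$ norms with Minkowski's inequality and $\sup_\tau\|1+|X_\tau|^p\|_{L^2}<\infty$ yields a bound $\le C(\varphi)\log(1+t)$; dividing by $1+mt$ gives $\E|\epsilon_t^{(k),3}(\varphi)|^2\le C(\varphi)\frac{(\log(1+t))^2}{(1+mt)^2}\le\frac{C(\varphi)}{t}$.

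The martingale term $\epsilon_t^{(k),2}(\varphi)$ is the one requiring the most care, because dividing a martingale by the random, time-dependent denominator $1+\int_0^tF_\tau d\tau$ is not itself a martingale operation. I would handle it by first writing $\epsilon_t^{(k),2}(\varphi)=\frac{1}{1+\int_0^tF_\tau(\xi_\St(X_\tau))d\tau}M_t^{(k)}$ with $M_t^{(k)}=-\sqrt2\int_0^t\langle\nabla\Phi^{(k)}(\tau,X_\tau),\Sigma dW_\tau\rangle$ a genuine martingale, then using the pathwise lower bound on the denominator to get $|\epsilon_t^{(k),2}(\varphi)|\le\frac{|M_t^{(k)}|}{1+mt}$, and finally bounding $\E|M_t^{(k)}|^2$ by the Itô isometry: $\E|M_t^{(k)}|^2=2\E\int_0^t|\Sigma^\top\nabla\Phi^{(k)}(\tau,X_\tau)|^2d\tau\le C\int_0^t\E(1+|X_\tau|^p)^2d\tau\le C(\varphi)\,t$ using the uniform gradient bound from Proposition~\ref{propo:Poisson_2}(ii) (times the bounded factor $1/\no^{(k)}$) and the uniform moment bound. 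Hence $\E|\epsilon_t^{(k),2}(\varphi)|^2\le\frac{C(\varphi)t}{(1+mt)^2}\le\frac{C(\varphi)}{t}$. Combining the three estimates via $\E|\epsilon_t^{(k)}(\varphi)|^2\le 3\sum_{i=1}^3\E|\epsilon_t^{(k),i}(\varphi)|^2$ gives the claim with a constant $C(\varphi)$ independent of $t$ and $k$. The main obstacle is the third term, specifically making Proposition~\ref{propo:Poisson_2}(iii) yield the $1/(1+\tau)$ decay of $\partial_\tau\Phi^{(k)}$ including the contribution of $\partial_\tau\no^{(k)}(\overline F_\tau)$ — this needs the chain rule through the (only $\mathcal{C}^1$, not smoother) approximations $\no^{(k)}$ together with the $O(1/t)$ decay of $\partial_t\overline F_t$, which is why the approximation index $k$ was introduced in the first place; the $\log(1+t)$ factor it produces is harmless against the $(1+mt)^{-2}$ denominator.
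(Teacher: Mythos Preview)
Your proof is correct and follows the same approach as the paper: bound each $\epsilon_t^{(k),i}$ separately using the polynomial estimates from Proposition~\ref{propo:Poisson_2}, the uniform-in-$k$ bounds on $1/\no^{(k)}(\overline{F}_t)$ from Assumption~\ref{ass:normalization}, the moment bounds on $X_t$, and the deterministic lower bound $1+\int_0^t F_\tau\,d\tau\ge 1+mt$ on the denominator. You are in fact more explicit than the paper about the third term, correctly identifying that $\partial_\tau\Phi^{(k)}$ has a contribution from $\partial_\tau\bigl(1/\no^{(k)}(\overline{F}_\tau)\bigr)$ and verifying its $O(1/(1+\tau))$ decay via the Lipschitz bound on $\no^{(k)}$ and the ODE~\eqref{eq:ODE}; the paper absorbs this into the phrase ``similar arguments''.
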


Observe that Lemma~\ref{lem:epsilon} is valid for test functions $\varphi$ in the set $\mathcal{C}$ defined by~\eqref{eq:C}. To prove Theorem~\ref{th:cv_as_phi}, {\it i.e.} an almost sure convergence result, we will only use it with test functions which are bounded and have bounded derivatives. However, to prove Proposition~\ref{propo:variance}, we will need this Lemma for test functions with polynomial growth.

\begin{proof}[Proof of Lemma~\ref{lem:epsilon}]
The proof of that result consists in using the estimates of Proposition~\ref{propo:Poisson_2}.
\begin{itemize}
\item Thanks to item (i) from Proposition~\ref{propo:Poisson_2}, for every $t\ge 0$, $\Phi^{(k)}(t,\cdot)$ has at most polynomial growth, and moments of the process $X$ are bounded, see Theorem~\ref{th:well-posed}. More precisely, the parameters $C$ and $p$ in the right-hand side of the inequality~\eqref{eq:propo_Poisson_2_Linfty} do not depend on $A=A_\tau$. Moreover, thanks to Assumption~\ref{ass:normalization}, for every $k\in\N$ and $\tau\ge 0$, one has $\no^{(k)}(\overline{F}_\tau)\ge m>0$ almost surely.

It is then straightforward to conclude that
\[
\E|\epsilon_t^{(k),1}(\varphi)|^2\le \frac{C(\varphi)}{t^2}.
\]
\item To have an estimate of the stochastic integral, we use It\^o's formula, and we obtain
\begin{align*}
\E|\epsilon_t^{(k),2}(\varphi)|^2&\le C\frac{1+\int_{0}^{t}|\Sigma^\star\nabla_x \Phi^{(k)}(\tau,X_\tau)|^2d\tau}{1+t^2}\\
&\le \frac{C(\varphi)}{t},
\end{align*}
thanks to~\eqref{eq:propo_Poisson_2_Dy}, and arguments similar to the term above.

\item Finally, using~\eqref{eq:propo_Poisson_2_derivative}, and similar arguments, one obtains
\[
\E|\epsilon_t^{(k),3}(\varphi)|^2\le \frac{C(\varphi)\Bigl(\int_{0}^{t}\frac{1}{1+\tau}d\tau\Bigr)^2}{t^2}\le \frac{C(\varphi)\bigl(1+\log(t)\bigr)^2}{t^2}.
\]
\end{itemize}
Gathering estimates then concludes the proof of Lemma~\ref{lem:epsilon}.
\end{proof}

We are now in position to deduce Theorem~\ref{th:cv_as_phi} from Lemma~\ref{lem:epsilon}. First, note that it is straightforward to obtain
\[
\E|\overline{\mu}_t(\varphi)-\mu_\star(\varphi)|^2\le \frac{C(\varphi)}{t}.
\]
Indeed, the right-hand side in the estimate of Lemma~\ref{lem:epsilon} does not depend on $k$, and taking the limit $k\to\infty$ in the right-hand side gives the result, thanks to Assumption~\ref{ass:normalization} which ensures the required uniform convergence properties for the application of the bounded convergence theorem.

This estimate ensures the convergence in mean-square sense, and in probability, of $\overline{\mu}_t(\varphi)$ to $\mu_\star(\varphi)$. To go further, and obtain the almost sure convergence, we use the following arguments. First, note that it is sufficient to prove that $\overline{\mu}_{\exp(t)}$ converges almost surely to $\mu_\star(\varphi)$ when $t\to \infty$. Using the estimate
\[
\E|\overline{\mu}_{\exp(t)}(\varphi)-\mu_\star(\varphi)|^2\le C(\varphi)e^{-t},
\]
and Borel-Cantelli's Lemma, then almost surely, for every $\delta\in\mathbb{Q}\cap(0,\infty)$,
\[
\overline{\mu}_{\exp(n\delta)}(\varphi)\underset{n\to \infty}\to \mu_\star(\varphi).
\]
Finally, thanks to the differential equation~\eqref{eq:ODE} and boundedness of the function $\varphi$, the mapping $t\mapsto \overline{\mu}_{\exp(t)}(\varphi)$ is Lipschitz continuous, with constant smaller than $C(\varphi)$, almost surely, for some $C(\varphi)\in(0,\infty)$ depending only on $\varphi$, and on the parameters appearing in the definition of the set $\mathcal{F}$, see~\eqref{eq:fonctions_F}. It is then straightforward to obtain the almost sure convergence
\[
\overline{\mu}_{\exp(t)}(\varphi)\underset{t\to \infty}\to \mu_\star(\varphi).
\]

This concludes the proof of Theorem~\ref{th:cv_as_phi}.

\section{Analysis of the mean-square error. Proof of Proposition~\ref{propo:variance}}\label{sec:proof_variance}

In this section, we give a proof of Proposition~\ref{propo:variance}, concerning the asymptotic behavior of the mean-square error, which is decomposed as
\begin{equation}
\E\big|\overline{\mu}_t(\varphi)-\mu_\star(\varphi)\big|^2=
\bigl(\E\overline{\mu}_t(\varphi)-\mu_\star(\varphi)\bigr)^2
+{\rm Var}\bigl(\overline{\mu}_t(\varphi)\bigr),
\end{equation}
when $t\to\infty$, for functions $\varphi\in\mathcal{C}$, of class $\mathcal{C}^\infty$, with at most polynomial growth.

In Section~\ref{sec:proof_variance_bias}, we prove that the bias satisfies
\begin{equation}\label{eq:bias}
\E\overline{\mu}_t(\varphi)-\mu_\star(\varphi)={\rm O}(\frac{1+\log(t)}{t}).
\end{equation}
In Section~\ref{sec:proof_variance_variance}, we then prove that
\begin{equation}\label{eq:var}
t\E\big|\overline{\mu}_t(\varphi)-\mu_\star(\varphi)\big|^2\underset{t\to \infty}\to \mathbf{V}_\infty(\varphi)\in[0,\infty).
\end{equation}
In particular, thanks to~\eqref{eq:bias}, we may interpret the limit as the asymptotic variance, since
\[
\mathbf{V}_\infty(\varphi)=\underset{t\to \infty}\lim t{\rm Var}\bigl(\overline{\mu}_t(\varphi)\bigr).
\]
The asymptotic variance is expressed in terms of the solution of a Poisson equation~\eqref{eq:Poisson}, with $A=A_\infty=\underset{t\to\infty}\lim A_t$ (defined in Corollary~\ref{cor:cv_A}).

In Section~\ref{sec:proof_variance_A}, we check that $\mathbf{V}_\infty(\varphi)=\mathbf{V}_\infty(\varphi,A_\infty)$, where $\mathbf{V}_\infty(\varphi,A)\in[0,\infty)$ is the asymptotic variance associated with the non-adaptively biasing method, using~\eqref{eq:BP} and~\eqref{eq:mu_t^A}, with $A=A_\infty$.

\subsection{Asymptotic behavior of the bias}\label{sec:proof_variance_bias}

Let us prove~\eqref{eq:bias}. Using the same arguments as in Section~\ref{sec:convergence_details}, note that
\begin{align*}
\E\overline{\mu}_t(\varphi)-\mu_\star(\varphi)
&=\E\bigl[\underset{k\to\infty}\lim \epsilon_t^{(k)}(\varphi)\bigr]\\
&=\underset{k\to\infty}\lim\E\bigl[\epsilon_t^{(k)}(\varphi)\bigr]\\
&=\underset{k\to\infty}\lim\E\bigl[\epsilon_t^{(k),1}(\varphi)+\epsilon_t^{(k),3}(\varphi)\bigr].
\end{align*}
Indeed, using Assumption~\ref{ass:normalization} and the property that $\overline{F}_t\in\mathcal{F}$, for all $t\ge 0$, almost surely, allows us to use the bounded convergence theorem. Moreover, note that $\E\bigl[\epsilon_t^{(k),2}(\varphi)\bigr]=0$, for all $k\in\N$ and $t\ge 0$. Then~\eqref{eq:bias} is an immediate corollary of Lemma~\ref{lem:epsilon}.

\subsection{Asymptotic behavior of the mean-square error}\label{sec:proof_variance_variance}

Let us now prove~\eqref{eq:var}. Like in Sections~\ref{sec:convergence_details} and~\ref{sec:proof_variance_bias} above, we use the decomposition of $\overline{\mu}_t(\varphi)-\mu_\star(\varphi)$ in terms of the auxiliary function $\Phi^{(k)}$; we prove error bounds which are uniform with respect to $k\in\N$, and pass to the limit $k\to \infty$, thanks to Assumption~\ref{ass:normalization} and Theorem~\ref{th:well-posed}.

It is straightforward to check that, uniformly in $k\in\N$,
\[
t\E\big|\overline{\mu}_t(\varphi)-\mu_\star(\varphi)\big|^2-t\E\Big|\frac{\int_{0}^{t}\sqrt{2}\langle \nabla_x\Phi^{(k)}(\tau,X_\tau),\Sigma dW(\tau)\rangle}{1+\int_{0}^{t}F_\tau\bigl(\xi_\St(X_\tau^A)\bigr)d\tau}\Big|^2={\rm O}\Bigl(\frac{1}{t}\Bigr),
\]
thanks to Lemma~\ref{lem:epsilon}, {\it i.e.} only the stochastic integral contributes to the asymptotic variance. We can directly pass to the limit $k\to\infty$ at this stage.

Let $R(t)=\frac{1+\int_{0}^{t}F_\tau\bigl(\xi_\St(X_\tau^A)\bigr)d\tau}{t}$. Then almost surely, thanks to Corollary~\ref{cor:cv_A} (uniform convergence of $F_\tau$ to $F_\infty$ when $\tau\to \infty$), Corollary~\ref{cor:conv_rho}, and the version of Cesaro's Lemma stated in the proof of Corollary~\ref{cor:conv_rho},
\[
R(t)\underset{t\to\infty}\to \int_{\St}F_\infty\circ\xi_\St d\mu_\star^{A_\infty}=\frac{\mu_\star^{A_\infty}\bigl(e^{-\star A_\infty\circ\xi_\St}\bigr)}{\no(\overline{F}_\infty)}=\frac{1}{\mu_\star^{0}\bigl(e^{A_\infty\circ\xi_\St}\bigr)\no(\overline{F}_\infty)},
\]
using~\eqref{eq:mu_muA}. We then obtain
\begin{align*}
t\E\Big|\frac{\int_{0}^{t}\frac{\sqrt{2}}{\no(\overline{F}_\tau)}\langle \nabla_x\Psi(A_\tau,X_\tau),\Sigma dW(\tau)\rangle}{1+\int_{0}^{t}F_\tau\bigl(\xi_\St(X_\tau^A)\bigr)d\tau}\Big|^2&=\E\frac{\big|\int_{0}^{t}\frac{\sqrt{2}}{\no(\overline{F}_\tau)}\langle \nabla_x\Psi(A_\tau,X_\tau),\Sigma dW(\tau)\rangle\big|^2}{t|R(t)|^2}\\
&=\E\frac{\big|\int_{0}^{t}\frac{\sqrt{2}}{\no(\overline{F}_\tau)}\langle \nabla_x\Psi(A_\tau,X_\tau),\Sigma dW(\tau)\rangle\big|^2}{t}+{\rm o}(1)\\
&=\frac{2\int_{0}^{t}\E\frac{|\Sigma^\star\nabla_x\Psi(A_\tau,X_\tau)|^2}{\no(\overline{F}_\tau)^2}d\tau}{t}+{\rm o}(1)\\
&=\frac{\frac{2}{{\no(\overline{F}_\infty)^2}}\int_{0}^{t}\E|\Sigma^\star\nabla_x\Psi(A_\infty,X_\tau)|^2d\tau}{t}+{\rm o}(1)\\
&=2\E\overline{\mu}_t\bigl(|\Sigma^\star\nabla_x\Psi(A_\infty,\cdot)|^2\bigr)+{\rm o}(1)\\
&\underset{t\to \infty}\to 2\mu_\star\bigl(|\Sigma^\star\nabla_x\Psi(A_\infty,\cdot)|^2\bigr),
\end{align*}
thanks to Lemma~\ref{lem:epsilon}, applied to the function $|\Sigma^\star\nabla_x\Psi(A_\infty,\cdot)|^2\in\mathcal{C}$, thanks to Proposition~\ref{propo:Poisson_1}. Observe that the limit does not depend on the normalization operator $\no$. We thus obtain~\eqref{eq:var}, more precisely,
\[
t\E\big|\overline{\mu}_t(\varphi)-\mu_\star(\varphi)\big|^2\underset{t\to \infty}\to \mathbf{V}_\infty(\varphi)=2\mu_\star\bigl(|\Sigma^\star\nabla_x\Psi(A_\infty,\cdot)|^2\bigr).
\]

\subsection{Comparison with the non-adaptive biasing method}\label{sec:proof_variance_A}

We now check that the expression obtained above for the asymptotic variance in the adaptive method, coincides with the expression of the asymptotic variance in the non-adaptive method, when choosing $A=A_\infty$.

Let $A\in\mathcal{A}$ (see~\eqref{eq:fonctions_F}), and $\varphi\in\mathcal{C}$. Using~\eqref{eq:mu_t^A}, and the solution of the Poisson equation~\eqref{eq:Poisson},
\begin{align*}
\overline{\mu}_t^A(\varphi)-\mu_\star(\varphi)
&=\frac{1+\int_{0}^{t}\exp\bigl(-A\circ\xi_\St(X_\tau^A)\bigr)
\bigl[\varphi(X_\tau^A)-\mu_\star(\varphi)\bigr]d\tau}
{1+\int_{0}^{t}\exp\bigl(-A\circ\xi_\St(X_\tau)\bigr)d\tau}\\
&=\frac{1+\int_{0}^{t}\mathcal{L}_{\mathcal{X}}^{A}\Psi(A,X_\tau^A)d\tau}
{1+\int_{0}^{t}\exp\bigl(-A\circ\xi_\St(X_\tau)\bigr)d\tau}\\
&=\frac{\Psi(A,X_t^A)-\Psi(A,X_0^A)
-\int_{0}^{t}\sqrt{2}\langle \nabla_x\Psi(A,X_\tau^A),\Sigma dW(\tau)\rangle}
{1+\int_{0}^{t}\exp\bigl(-A\circ\xi_\St(X_\tau)\bigr)d\tau}
\end{align*}

Since $A\in\mathcal{A}$, $\exp\bigl(-A\circ\xi_\St\bigr)\ge m>0$, for some $m>0$. Then
\[
t\E\big|\overline{\mu}_t^A(\varphi)-\mu_\star(\varphi)\big|^2-t\E\Big|\frac{\int_{0}^{t}\sqrt{2}\langle \nabla_x \Psi(A,X_\tau^A),\Sigma dW(\tau)\rangle}{1+\int_{0}^{t}\exp\bigl(-A\circ\xi_\St(X_\tau^A)\bigr)d\tau}\Big|^2={\rm O}\Bigl(\frac{1}{t}\Bigr).
\]
Let $R^A(t)=\frac{1+\int_{0}^{t}\exp\bigl(-A\circ\xi_\St(X_\tau^A)\bigr)d\tau}{t}$. Then almost surely $R^A(t)\underset{t\to\infty}\to \mu_\star^A(e^{-A\circ\xi_\St})=\frac{1}{\mu_\star(e^{A\circ\xi_\St})}$.

With the same arguments as in Section~\ref{sec:proof_variance_variance} above,
\begin{align*}
t\E\Big|\frac{\int_{0}^{t}\sqrt{2}\langle \nabla_x \Psi(A,X_\tau^A),\Sigma dW(\tau)\rangle}{1+\int_{0}^{t}\exp\bigl(-A\circ\xi_\St(X_\tau^A)\bigr)d\tau}\Big|^2&=\E\frac{\big|\int_{0}^{t}\sqrt{2}\langle \nabla_x \Psi(A,X_\tau^A),\Sigma dW(\tau)\rangle\big|^2}{t|R^A(t)|^2}\\
&=\E\frac{\big|\int_{0}^{t}\sqrt{2}\langle \nabla_x \Psi(A,X_\tau^A),\Sigma dW(\tau)\rangle\big|^2}{t}+{\rm o}(1)\\
&=\frac{2\int_{0}^{t}\E|\Sigma^\star\nabla_x\Psi(A,X_\tau^A)|^2d\tau}{t}+{\rm o}(1)\\
&=2\E\overline{\mu}_t^A\bigl(|\Sigma^\star\nabla_x\Psi(A,\cdot)|^2\bigr)+{\rm o}(1)\\
&\underset{t\to \infty}\to 2\mu_\star\bigl(|\Sigma^\star\nabla_x\Psi(A,\cdot)|^2\bigr).
\end{align*}

We thus conclude that
\[
t\E\big|\overline{\mu}_t^A(\varphi)-\mu_\star(\varphi)\big|^2\underset{t\to \infty}\to \mathbf{V}_\infty(\varphi,A)=2\mu_\star\bigl(|\Sigma^\star\nabla_x\Psi(A,\cdot)|^2\bigr).
\]
The asymptotic variance in the ABP method is thus equal to the asymptotic variance in the non-adaptive method with $A=A_\infty=\underset{t\to\infty}\lim A_t$, as expected:
\[
\mathbf{V}_\infty(\varphi)=2\mu_\star\bigl(|\Sigma^\star\nabla_x\Psi(A_\infty,\cdot)|^2\bigr)=\mathbf{V}_\infty(\varphi,A_\infty).
\]

\section{The SPDE case}\label{sec:SPDE}

The aim of this section is to generalize the approach developed in other sections of this article, to deal with metastable stochastic processes in infinite dimension. More precisely, we describe an ABP method designed to compute averages $\mu_\star(\varphi)$, where $\mu_\star$ is a probability distribution defined on an infinite dimensional (Hilbert) space; the corresponding diffusion processes are given by some parabolic semilinear Stochastic Partial Differential Equations (SPDEs).

In Section~\ref{sec:model_SPDE}, we describe the model, and we explain how it fits in the framework of Section~\ref{sec:framework}. In particular, this description justifies the introduction of the abstract objects in Section~\ref{sec:framework}.

Some arguments and some statements need to be substantially modified, compared with the finite dimensional situation, see Sections~\ref{sec:results_SPDE} and~\ref{sec:modifs_SPDE}, and details in Section~\ref{sec:appendix_infinie}.

\subsection{The model}\label{sec:model_SPDE}

In this section, we consider infinite dimensional diffusion processes, which are solutions of parabolic, semilinear, SPDEs, driven by space-time white noise, in space dimension $1$, which may be written in the following form:
\begin{equation}\label{eq:SPDE_tx}
du^0(t,x)=\frac{\partial^2 u^0(t,x)}{\partial x^2}dt-\nabla \mathcal{V}(u^0(t,x))dt+\sqrt{2}dW(t,x),
\end{equation}
for $x\in (0,1)$, with (for instance) homogeneous Dirichlet boundary conditions.

The function $\mathcal{V}:\R\to \R$ is a smooth mapping. With the choice $\mathcal{V}(x)=\frac{x^4}{4}-\frac{x^2}{2}$, one obtains the Allen-Cahn equation, which is the paradigmatic example of metastable SPDE considered in the literature: see for instance~\cite{BG13},~\cite{BdH_book},~\cite{BGGR15},~\cite{FJl82}. In particular, (non-adaptive) importance sampling techniques are considered for this problem in~\cite{VW12}.

In this article, $\mathcal{V}$ is assumed to have bounded derivatives, in order to simplify the presentation and the functional setting. Metastable states are solutions of the stationary PDE
\[
\frac{\partial^2 u(x)}{\partial x^2}-\nabla \mathcal{V}(u(x))=0.
\]
Assume that the potential energy function $\mathcal{V}$ is even; then $x\mapsto u^0(x)=0$ is one solution. Moreover, if there exists another solution $x\mapsto u^+(x)$, $x\mapsto u^-(x)=-u^+(x)$ is also a solution. These solutions are critical points of the energy functional
\[
u\mapsto \int_{0}^{1}\bigl[\frac{1}{2}\big|\frac{\partial u(x)}{\partial x}\big|^2+\mathcal{V}\bigl(u(x)\bigr)\bigr]dx,
\]
and may be local minima, saddle points, etc...

\bigskip

It is convenient and standard to write~\eqref{eq:SPDE_tx} as a Stochastic Evolution Equation in the Hilbert space $H=L^2(0,1)$, see for instance the monograph~\cite{DPZ}:
\begin{equation}\label{eq:SPDE_dyn}
du_t^0=Lu_t^0dt-D V(u_t^0)dt+\sqrt{2}dW(t),
\end{equation}
where $D$ denotes the Fr\'echet derivative, and
\begin{itemize}
\item $\bigl(e_n\bigr)_{n\in\N^*}$ is the complete orthonormal system of $H$ given by $e_n(x)=\sqrt{2}\sin(n\pi x)$;
\item the unbounded linear operator $L:H\to H$ satisfies $Lu=-\sum_{n\in \N^*}\pi^2n^2\langle u,e_n\rangle e_n$;
\item $V(u)=\int_{0}^{1}\langle \nabla \mathcal{V}(\theta u),u\rangle d\theta$ for all $u\in H$;
\item $\bigl(W(t)\bigr)_{t\ge 0}$ is a cylindrical Wiener process on $H$, {\it i.e.} $W(t)=\sum_{n\in \N^{*}}\beta_n(t)e_n$ for a family $(\beta_n)_{n\in \N^*}$ of independent, one-dimensional, standard Wiener processes.
\end{itemize}
Equation~\eqref{eq:SPDE_dyn} admits a unique mild solution (see~\cite{DPZ}) with values in $H$, defined for $t\ge 0$, {\it i.e.} $u^0$ is the unique process solution satisfying the equation
$$u_t^0=e^{tL}u_0-\int_{0}^{t}e^{(t-s)L}D V(u_s^0)ds+\sqrt{2}\int_{0}^{t}e^{(t-s)L}dW(s),$$
where $\bigl(e^{tL}\bigr)_{t\in[0,+\infty)}$ is the semi-group on $H$ generated by $L$: $e^{tL}u=\sum_{n\in \N^*}e^{-\pi^2n^2t}\langle u,e_n\rangle e_n$.

In the context of this section, Assumption~\ref{ass:V_OL} is satisfied when the following condition is satisfied:
\begin{equation}\label{eq:ass_V_SPDE}
\underset{x\in\R}\sup~|\mathcal{V}''(x)|<\pi^2.
\end{equation}
In other words, the Lipschitz constant of the non-linear coefficient $u\in H\mapsto DV(u)\in H$ is bounded from above by all the eigenvalues of $-L$. Ergodicity of the SPDE~\eqref{eq:SPDE_dyn} is then obtained by the following arguments, see for instance~\cite[Section~6.3]{DPZ_2} for additional details. Let $u_0,v_0\in H$ denote two initial conditions, and define $\bigl(u_t^0\bigr)_{t\ge 0}$ and $\bigl(v_t^0\bigr)_{t\ge 0}$ the solutions of~\eqref{eq:SPDE_dyn} driven by the same Wiener process $\bigl(W(t)\bigr)_{t\ge 0}$. Then $r_t=u_t^0-v_t^0$ satisfies
\[
\frac{dr_t}{dt}=Lr_t+DV(v_t^0)-DV(u_t^0),
\]
and thus
\begin{align*}
\frac{1}{2}\frac{\|dr_t\|_H^2}{dt}&=-\langle (-L)r_t,r_t\rangle+\langle DV(v_t^0)-DV(u_t^0),r_t\rangle\\
&\le -\pi^2\|r_t\|_H^2+\underset{x\in\R}\sup~|V''(x)| \|r_t\|_H^2\le -\gamma \|r_t\|_H^2,
\end{align*}
with $\gamma>0$, thanks to the condition~\eqref{eq:ass_V_SPDE}. By Gronwall's Lemma, $\E\|v_t^0-u_t^0\|_H^2\le e^{-\gamma t}\|v_0-u_0\|_H^2$, which yields uniqueness of an invariant distribution for the SPDE~\eqref{eq:SPDE_dyn}, as well as exponential convergence to equilibrium. There are several general ways to prove the existence of an invariant distribution, see~\cite[Chapter~6]{DPZ_2}. Alternatively, in the situation treated in the present work, the SPDE has a gradient structure and an explicit formula for the invariant distribution is available, see~\cite[Theorem~8.6.3]{DPZ_2}:
\[
\mu_\star(du)=\frac{\exp\bigl(-V(u))\bigr)}{Z}\lambda(du)
\]
for some $Z\in(0,\infty)$, where the reference measure $\lambda$ is a Gaussian measure on $H$, defined below in Section~\ref{sec:invar_SPDE}.

\bigskip

We are now in position to explain how the SPDE dynamics fits into the general framework presented in this article, in Section~\ref{sec:framework}.

\subsubsection{Setting}

In the SPDE example, one has the following elements, see Section~\ref{sec:dynamics}.
\begin{itemize}
\item {\bf State space:} $\St=H$ (infinite dimensional, separable, Hilbert space).
\item {\bf Reaction coordinate:} assume that $\Ma_\md=\tore$ (with $\md=1$), $E_1=\R$ (with $d=1$). Then for instance $\xi(u)=\xi_\St(u)=\frac{1}{2}+\frac{1}{\pi}\arctan\bigl(\frac{1}{2}\int_{0}^{1}u(x)dx\bigr)$.
\item {\bf Drift coefficient:} $\Dr(V,A)=Lu-D\bigl(V-A\circ\xi_\St\bigr)$. {\bf Diffusion operator:} $\Sigma$ is the identity on $\St$.
\end{itemize}

Since $L$ is an unbounded linear operator on $H$, note that the drift is only defined on a domain $D(L)\subset H$. This is one of the technical issues which are specific to the infinite dimensional framework.

\begin{rem}
Note that, in general, there does not exist a function $\mathcal{V}_A:\R\to \R$ such that the function $V-A\circ\xi:L^2(0,1)\to \R$ satisfies $D(V-A\circ\xi)(u)(x)=\nabla \mathcal{V}_A(u(x))$: the bias is a nonlocal function of $u$, since it depends on the spatial average $\int_0^1u(y)dy$, instead of $u(x)$ only.
\end{rem}

The biased version~\eqref{eq:BP} of the SPDE~\eqref{eq:SPDE_tx} is written as
\begin{equation}\label{eq:BP_SPDE}
du^A(t)=Lu^A(t)dt-D\bigl(V-A\circ\xi_\St\bigr)(u^A(t))dt+\sqrt{2}dW(t),
\end{equation}
with mild formulation
\[
u^A(t)=e^{tL}u_0-\int_{0}^{t}e^{(t-s)L}D V(u^A(s))ds+\int_{0}^{t}e^{(t-s)L}D\bigl(A\circ \xi_\St\bigr)(u^A(s))ds+\sqrt{2}\int_{0}^{t}e^{(t-s)L}dW(s).
\]

\subsubsection{Invariant probability distribution}\label{sec:invar_SPDE}

We now construct the Total Energy function, and the reference measure $\lambda$ on $H$.

First, the definition of the mapping $V\mapsto \mathcal{E}(V)$ is straightforward: $\mathcal{E}(V)=V$. The reference measure $\lambda$ on $\St$ is defined as follows: it is the centered Gaussian probability distribution on $H$ with covariance operator $L^{-1}$. This measure can be constructed as follows: let $\bigl(g_n\bigr)_{n\in\N^\star}$ be a sequence of independent standard real-valued Gaussian random variables (centered and with variance $1$); then $\lambda$ is the probability distribution of the $H$-valued random variable $\sum_{n\in\N^\star}\frac{1}{n\pi}g_n e_n$.

\begin{rem}
One may check that $\lambda$ defined as above is the distribution of the Brownian Bridge on $(0,1)$. This interpretation is specific to the choice of $L$ and plays no role in this article. On the contrary, the construction above, based on eigenvalues and eigenvectors of $L$, is general.
\end{rem}

It is straightforward to check that $\lambda$ is the unique invariant distribution of~\eqref{eq:SPDE_dyn} when $V=0$. More generally, for any function $A:\tore\to \R$ of class $\mathcal{C}^\infty$, the probability distribution $\mu_\star^A$ on $H$, defined by
\[
\mu_\star^A(du)=\frac{\exp\bigl(-(V(u)-A(\xi(u)))\bigr)}{Z^A}\lambda(du)
\]
where $Z^A\in(0,\infty)$ thanks to~\eqref{eq:ass_V_SPDE}, is the unique invariant distribution of the biased SPDE~\eqref{eq:BP_SPDE}, see for instance~\cite{DPZ_2}.

\subsubsection{Free Energy function}

It remains to discuss how the Free Energy function $A_\star$ is defined: this is done using Definition~\ref{def:FE}, like in the finite dimensional case. It is natural to choose $\pi$ to be the Lebesgue measure on $\tore$. Indeed, the image measure of the Gaussian distribution $\lambda$ by the linear mapping $u\mapsto \int_0^1 u(y)dy$ is a non-degenerate Gaussian distribution on $\R$; thus the image of $\lambda$ by $\xi_\St$ is equivalent to the Lebesgue measure on $\tore$. Then $\pi_\star^0$ the image of $\mu_\star^0$ by $\xi$ is equivalent to $\pi$.

\subsection{ABP dynamics and convergence results}\label{sec:results_SPDE}

Let us first describe the dynamics of the ABP method, which generalizes~\eqref{eq:ABP_full} in the case where the diffusion process is governed by a SPDE:
\begin{equation}\label{eq:ABP_SPDE}
\begin{cases}
du(t)=Lu(t)dt-D\bigl(V-A_t\circ\xi_\St\bigr)(u(t))dt+\sqrt{2}dW(t)\\
\overline{\mu}_t=\frac{\overline{\mu}_0+\int_{0}^{t}\exp\bigl(-A_r\circ\xi_\St(u(r))\bigr)\delta_{u(r)}dr}{1+\int_{0}^{t}\exp\bigl(-A_r\circ\xi_\St(u(r))\bigr)dr}\\
\exp\bigl(-A_t(z)\bigr)=\int_{\tore^d}K\bigl(z,\xi_\St(u)\bigr)\overline{\mu}_t(du),~\forall z\in\tore^m,
\end{cases}
\end{equation}
For simplicity, we have chosen the normalization operator $\No$, with $\no(F)=\int_\tore F(z)dz$. The kernel function $K:\tore\times\tore\to (0,\infty)$ satisfies Assumption~\ref{ass:kernel}.

As explained in Section~\ref{sec:model_SPDE} above, it is convenient to consider the mild formulation for the SPDE dynamics: the first equation in~\eqref{eq:ABP_SPDE} is understood as
\[
u(t)=e^{tL}u_0-\int_{0}^{t}e^{(t-s)L}D\bigl( V-A_s\circ\xi\bigr)(u(s))ds+\sqrt{2}\int_{0}^{t}e^{(t-s)L}dW(s).
\]
Using Lemma~\ref{lemma:apriori} and standard techniques, the following generalization of Theorem~\ref{th:well-posed} is obtained.
\begin{theo}\label{th:well-posed_SPDE}
Consider the framework of Section~\ref{sec:model_SPDE} (in particular assume that~\eqref{eq:ass_V_SPDE} is satisfied), and assume that the kernel function $K$ satisfies Assumption~\ref{ass:kernel}.
\begin{itemize}
\item There exists a unique continuous process $t\mapsto (u(t),\overline{\mu}_t,A_t)$, taking values in $H\times \mathcal{P}(H) \times \mathcal{C}^{0}(\Ma_\md,(0,\infty))$, which is solution of the ABP system~\eqref{eq:ABP_SPDE}.
\item For all $k\ge 1$, $\sup_{t\ge 0}\E\|u(t)\|_H^k<+\infty$.
\item There exist $m\in(0,\infty)$ and $\bigl(M^{(r)}\bigr)_{r\in\left\{0,1,\cdots\right\}}\in(0,\infty)$ such that, almost surely, $A_t\in\mathcal{A}$, for all $t\in\R^+$, where
\begin{equation}
\begin{cases}
\mathcal{F}=\left\{F\in \mathcal{C}^\infty(\Ma_\md); \min F\geq m, \max|\partial^{k}F|\leq M^{(k)}, k\ge 0\right\},\\
\mathcal{A}=\left\{A=-\log(\overline{F});~F\in\mathcal{F}\right\}.
\end{cases}
\end{equation}
\end{itemize}
\end{theo}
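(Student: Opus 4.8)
The plan is to follow the same scheme as in the proof of Theorem~\ref{th:well-posed}, replacing the strong formulation of the SDE by the mild formulation of~\eqref{eq:BP_SPDE}, and the It\^o/elliptic arguments by the regularizing properties of the analytic semigroup $\bigl(e^{tL}\bigr)_{t\ge 0}$. Compared with the finite dimensional non-compact case there is a crucial simplification: under~\eqref{eq:ass_V_SPDE} the function $\mathcal{V}$ has bounded derivatives, hence $u\in H\mapsto DV(u)\in H$ is globally Lipschitz continuous and bounded, so no localization procedure is needed and the solution will be directly global. Moreover Lemma~\ref{lemma:apriori} applies verbatim, since the constants $m$ and $\bigl(M^{(r)}\bigr)_{r\ge 0}$ defining $\mathcal{F}$ and $\mathcal{A}$ depend only on the kernel $K$ (through Assumption~\ref{ass:kernel}) and on the normalization operator $\no$, not on the diffusion dynamics; consequently the third item of the statement follows immediately once the solution is constructed.

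For the first item, fix $T\in(0,\infty)$ and define, on $L^2\bigl(\Omega,\mathcal{C}([0,T],H)\bigr)\times L^2\bigl(\Omega,\mathcal{C}([0,T],\mathcal{C}^1(\tore,(0,\infty)))\bigr)$, a mapping $\Psi^T(u,F)=(Z,H)$, where $Z$ is the unique mild solution of the biased SPDE~\eqref{eq:BP_SPDE} associated with the continuous bias path $\tau\mapsto A_\tau=-\log(\overline{F}_\tau)$ (well-posed for a given such path by standard fixed-point arguments since the drift is globally Lipschitz, see~\cite{DPZ}), and $H_t=\No\bigl(\K(\mu_t)\bigr)$ with $\mu_t=\overline{\mu}_0+\int_0^t F_\tau(\xi_\St(u_\tau))\,\delta_{u_\tau}\,d\tau$. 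By Lemma~\ref{lemma:apriori}, $H$ takes values in $\mathcal{F}$, so any fixed point satisfies $F_t\in\mathcal{F}$ for all $t\ge 0$, and we may restrict to such processes. To estimate $\Psi^T(u^1,F^1)-\Psi^T(u^2,F^2)$, the component $H$ is treated exactly as in the proof of Theorem~\ref{th:well-posed} (those estimates involve only $K$, $\no$ and the occupation measures). For the component $Z$, one uses $\|e^{tL}\|_{\mathcal{L}(H)}\le e^{-\pi^2 t}\le 1$, the global Lipschitz continuity of $u\mapsto DV(u)-D(A\circ\xi_\St)(u)$ uniformly over $A\in\mathcal{A}$, and the fact that $A\mapsto D(A\circ\xi_\St)$ is Lipschitz continuous from $\mathcal{C}^1(\tore)$ into bounded maps on $H$; the latter relies on the explicit, smooth, bounded structure of $\xi_\St$, namely $D(A\circ\xi_\St)(u)=A'(\xi_\St(u))\,D\xi_\St(u)$ with $\|D\xi_\St(u)\|_H$ bounded uniformly in $u$ and $D\xi_\St$ Lipschitz on $H$. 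One then obtains that $\Psi^T$ is a contraction when $T$ is small, and, for arbitrary $T$, after replacing the norm by an equivalent weighted metric $d_{\alpha,T}$ with $\alpha$ large enough, exactly as in the proof of Theorem~\ref{th:well-posed}. This yields the unique global solution of~\eqref{eq:ABP_SPDE}, and the third item follows from Lemma~\ref{lemma:apriori}.

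It remains to prove the uniform-in-time moment bounds (second item). Write $u(t)=W_L(t)+v(t)$ where $W_L(t)=\sqrt{2}\int_0^t e^{(t-s)L}\,dW(s)$ is the stochastic convolution. This is a centered $H$-valued Gaussian random variable whose covariance operator has eigenvalues $\tfrac{1-e^{-2\pi^2 n^2 t}}{\pi^2 n^2}\le \tfrac{1}{\pi^2 n^2}$, so that $\sup_{t\ge 0}\sum_{n\ge 1}\tfrac{1-e^{-2\pi^2 n^2 t}}{\pi^2 n^2}\le \sum_{n\ge 1}\tfrac{1}{\pi^2 n^2}<\infty$; by standard Gaussian estimates $\sup_{t\ge 0}\E\|W_L(t)\|_H^k<\infty$ for every $k\ge 1$, and continuity in $H$ of $t\mapsto W_L(t)$ is classical (see~\cite{DPZ}). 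The remainder $v(t)=u(t)-W_L(t)$ satisfies $v(t)=e^{tL}u_0+\int_0^t e^{(t-s)L}g(s)\,ds$ with $g(s)=-DV(u(s))+D(A_s\circ\xi_\St)(u(s))$; since $\sup_u\|DV(u)\|_H\le \sup_{x\in\R}|\mathcal{V}'(x)|$ and $\sup_u\|D(A_s\circ\xi_\St)(u)\|_H\le M^{(1)}\sup_u\|D\xi_\St(u)\|_H$ (using $A_s\in\mathcal{A}$ almost surely), $g$ is bounded in $H$ by a deterministic constant $C_g$, uniformly in $s$ and $\omega$. Hence $\|v(t)\|_H\le e^{-\pi^2 t}\|u_0\|_H+C_g\int_0^t e^{-\pi^2(t-s)}\,ds\le \|u_0\|_H+C_g/\pi^2$ for all $t\ge 0$, and combining the two estimates gives $\sup_{t\ge 0}\E\|u(t)\|_H^k<\infty$ for every $k\ge 1$.

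The main obstacle I expect is the careful verification of the Lipschitz and boundedness estimates for the biased mild SPDE \emph{uniformly over} $A\in\mathcal{A}$ --- in particular the treatment of the nonlocal bias term $D(A\circ\xi_\St)$ and of its joint dependence on $A$ and on $u$ --- together with the routine but somewhat lengthy construction of the mild solution and its $\mathcal{C}([0,T],H)$ regularity. All of these are standard given that the nonlinearity is globally Lipschitz and bounded; the only genuinely new input compared with the finite dimensional case is the infinite dimensional functional setting and the use of the semigroup smoothing, which is mild here since $L^{-1}$ is trace class.
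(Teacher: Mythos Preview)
Your proposal is correct and matches the paper's approach. The paper itself does not give a detailed proof of this theorem; it simply states that the result is obtained ``using Lemma~\ref{lemma:apriori} and standard techniques'', i.e.\ by reproducing the fixed-point scheme of Theorem~\ref{th:well-posed} in the mild SPDE setting, which is exactly what you do. Your observation that, under the boundedness of the derivatives of $\mathcal{V}$, the drift is globally Lipschitz and bounded (so no localization is needed), and your decomposition $u(t)=W_L(t)+v(t)$ for the moment bounds, are the natural implementations of these ``standard techniques''.
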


We are able to prove generalizations of Theorem~\ref{th:cv_as_phi} and of Corollary~\ref{cor:cv_A}

\begin{theo}\label{th:cv_SPDE}
\begin{itemize}
\item Let $\varphi\in\mathcal{C}^\infty(H,\R)$ be a bounded function, with bounded derivatives of any order. Then, almost surely,
\[
\overline{\mu}_t(\varphi)\underset{t\to \infty}\to \mu_\star(\varphi).
\]
\item Define, for all $z\in \Ma_\md$,
\[
A_\infty(z)=-\log(\mu_\star\bigl(K(z,\cdot)\bigr)).
\]
Then, almost surely, for every $\ell\in\left\{0,1,\ldots\right\}$, uniformly on $\Ma_\md$,
\[
\partial^\ell A_t\underset{t\to \infty}\to \partial^\ell A_\infty.
\]
\end{itemize}
\end{theo}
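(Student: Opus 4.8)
The plan is to mimic the finite-dimensional proof of Theorem~\ref{th:cv_as_phi} (Section~\ref{sec:proof_consistency}), replacing moment bounds on $|X_t|$ by moment bounds on $\|u(t)\|_H$ (and, where needed, on higher Sobolev or fractional-power norms $\|(-L)^{\sigma}u(t)\|_H$), and replacing the Poisson-equation machinery of Proposition~\ref{propo:Poisson_1}--\ref{propo:Poisson_2} by its SPDE analogue. Concretely, for $\varphi\in\mathcal{C}^\infty_b(H,\R)$ I would first observe, using~\eqref{eq:mu_muA}, that $\overline{\varphi}^A=e^{-A\circ\xi_\St}(\varphi-\mu_\star(\varphi))$ is centered with respect to $\mu_\star^A$, and then define $\Psi(A,u)=-\int_0^\infty \E_u[\overline{\varphi}^A(u^A(t))]\,dt$, where $u^A$ solves the biased SPDE~\eqref{eq:BP_SPDE}. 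The integral converges because, by the contraction estimate $\E\|v_t^A-u_t^A\|_H^2\le e^{-\gamma t}\|v_0-u_0\|_H^2$ established in Section~\ref{sec:model_SPDE} (valid uniformly in $A\in\mathcal{A}$ since $\|D(A\circ\xi_\St)\|$ is bounded by the parameters defining $\mathcal{A}$ and the Lipschitz norm of $\xi$), the biased SPDE is uniformly exponentially ergodic, so $|\E_u[\overline{\varphi}^A(u^A(t))]|\le C e^{-\gamma t}(1+\|u\|_H)$. This gives $\|\Psi(A,\cdot)\|_\infty\le C$ and, differentiating the flow, $\|D\Psi(A,u)\|\le C$ uniformly for $A\in\mathcal{A}$; the $t$-derivative estimate $|\partial_t\Psi(A_t,u)|\le C/(1+t)$ follows as in Proposition~\ref{propo:Poisson_2}(iii) from $\|\partial_t A_t\|_{\mathcal{C}^k}=O(1/(1+t))$, which itself comes from the ODE~\eqref{eq:ODE} governing $\overline{\mu}_t$ (so $\|\partial_t\overline{\mu}_t\|=O(1/(1+t))$) together with the smooth dependence $A=-\log(\overline{\mathcal{K}(\overline{\mu})})$.

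Once the bounded solution $\Psi$ of the Poisson equation $\mathcal{L}_{\mathcal{X}}^A\Psi(A,\cdot)=\overline{\varphi}^A$ is in hand, I would apply Itô's formula in $H$ to $t\mapsto \Phi(t,u(t)):=\Psi(A_t,u(t))$ along the ABP dynamics~\eqref{eq:ABP_SPDE} (here $\no(F)=\int_\tore F$, so $\overline{F}=F$ and the normalization subtleties of the general case disappear), obtaining exactly the decomposition used in Section~\ref{sec:convergence_details}:
\[
\overline{\mu}_t(\varphi)-\mu_\star(\varphi)=\frac{\Phi(t,u(t))-\Phi(0,u_0)}{1+\int_0^t e^{-A_\tau\circ\xi_\St(u(\tau))}d\tau}-\frac{\sqrt2\int_0^t\langle D\Phi(\tau,u(\tau)),dW(\tau)\rangle}{1+\int_0^t e^{-A_\tau\circ\xi_\St(u(\tau))}d\tau}-\frac{\int_0^t\partial_\tau\Phi(\tau,u(\tau))d\tau}{1+\int_0^t e^{-A_\tau\circ\xi_\St(u(\tau))}d\tau}.
\]
Using $1+\int_0^t e^{-A_\tau\circ\xi_\St(u(\tau))}d\tau\ge 1+mt$ (Theorem~\ref{th:well-posed_SPDE}), the boundedness of $\Psi$ and $D\Psi$, the Itô isometry for the martingale term, and the $1/(1+\tau)$ bound on $\partial_\tau\Phi$, I get $\E|\overline{\mu}_t(\varphi)-\mu_\star(\varphi)|^2\le C(\varphi)/t$, exactly the analogue of Lemma~\ref{lem:epsilon}. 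Almost sure convergence is then extracted verbatim from the finite-dimensional argument: evaluate along $t=\exp(n\delta)$, use Borel--Cantelli, and interpolate using the fact that $t\mapsto\overline{\mu}_{\exp(t)}(\varphi)$ is Lipschitz (from~\eqref{eq:ODE} and boundedness of $\varphi$). The second bullet — convergence of $\partial^\ell A_t$ to $\partial^\ell A_\infty$ — follows exactly as in Corollary~\ref{cor:cv_A}: build a metric $d_\infty$ on $\mathcal{P}(H)$ testing against a countable family $\partial_z^\ell K(z_n,\xi_\St(\cdot))$, note these are bounded smooth functions on $H$ so the first bullet gives $d_\infty(\overline{\mu}_t,\mu_\star)\to0$ a.s., apply Ascoli with the kernel bounds $\|\partial_z^{k+1}K\|_\infty\le M^{(k+1)}(K)$ to upgrade to uniform convergence of $\partial^\ell\mathcal{K}(\overline{\mu}_t)$, and finally use $\min\overline{F}_t\ge m>0$ to pass through $-\log$.

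The main obstacle is the Itô formula step: in infinite dimensions $u(t)$ lives in $H$ but not in $D(L)$, so $\mathcal{L}_{\mathcal{X}}^A\Psi$ involves the unbounded operator $L$ and the naive application of Itô is not justified. The standard remedy — which I would carry out — is to work with the mild/variational formulation and a Galerkin (spectral) approximation $u^{(N)}(t)=\Pi_N u(t)$, for which Itô applies classically; one then needs (i) uniform-in-$N$ moment bounds $\sup_N\sup_t\E\|u^{(N)}(t)\|_H^k<\infty$ and a fractional bound $\sup_N\E\sup_{[0,T]}\|(-L)^{\sigma}u^{(N)}(t)\|_H^2<\infty$ for small $\sigma>0$ (available because the stochastic convolution $\int_0^t e^{(t-s)L}dW(s)$ takes values in $D((-L)^{\sigma})$ for $\sigma<1/4$, the nonlinearity $DV$ is bounded, and $D(A\circ\xi_\St)$ is smooth with $A\in\mathcal{A}$), and (ii) enough regularity of $\Psi(A,\cdot)$ — second Fréchet differentiability with the trace term $\sum_n \langle D^2\Psi(A,u)e_n,e_n\rangle$ absolutely convergent — to pass to the limit $N\to\infty$ in Itô's formula. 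Regularity of $\Psi$ in this trace-class sense is the genuinely new technical point and would be proved by differentiating the representation $\Psi(A,u)=-\int_0^\infty\E_u[\overline{\varphi}^A(u^A(t))]\,dt$ twice in the initial condition, controlling the second derivative of the stochastic flow via the smoothing of $e^{tL}$ together with the uniform exponential contraction. Everything else — the algebra of the error decomposition, the $L^2$ bounds, Borel--Cantelli, Ascoli — is a routine transcription of Sections~\ref{sec:proof_decomp_error}--\ref{sec:convergence_details}, and I would relegate the details to Appendix~\ref{sec:appendix_infinie} as the text already announces.
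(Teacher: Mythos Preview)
Your proposal is correct and follows the paper's approach closely: Poisson-equation representation of the error, It\^o decomposition (justified via Galerkin approximation), the $L^2$ bound $\E|\overline{\mu}_t(\varphi)-\mu_\star(\varphi)|^2\le C/t$, then Borel--Cantelli plus Lipschitz interpolation for the almost-sure statement, and the Ascoli argument of Corollary~\ref{cor:cv_A} for the second bullet. The paper's Proposition~\ref{propo:Poisson_SPDE} makes your ``trace-class regularity'' point precise via the dual-norm estimates $|\langle D\Psi,h\rangle|\le C\|h\|_{-2\alpha}$ and $|D^2\Psi.(h,k)|\le C\|h\|_{-\alpha}\|k\|_{-\alpha}$ (obtained exactly from the smoothing of $e^{tL}$ you invoke), which simultaneously handle both the $\langle Lu,D\Psi(u)\rangle$ term and the trace $\sum_n D^2\Psi.(e_n,e_n)$; note also that your bound $|\E_u[\overline{\varphi}^A(u^A(t))]|\le Ce^{-\gamma t}(1+\|u\|_H)$ only yields $|\Psi(A,u)|\le C(1+\|u\|_H)$ rather than $\|\Psi\|_\infty\le C$, though this is harmless given the moment bounds in Theorem~\ref{th:well-posed_SPDE}.
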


The efficiency results from Section~\ref{sec:efficiency} may also be generalized: more precisely, the convergence result~\eqref{eq:cor_conv_rho}, and Proposition~\ref{propo:variance} remain valid.

\subsection{Some modifications for SPDEs}\label{sec:modifs_SPDE}

Compared with the finite dimensional situation, observe that we only state the almost sure convergence of averages $\overline{\mu}_t(\varphi)$ in Theorem~\ref{th:cv_SPDE}. The arguments used in the proof of Corollary~\ref{cor:cv_as} do not easily generalize to the infinite dimensional setting, to prove almost sure convergence of $\overline{\mu}_t$.

There are also modifications when dealing with the solutions of the Poisson equations. To simplify the discussion, assume first that $A=0$. Then the Poisson equation~\eqref{eq:Poisson} is written in the infinite dimensional setting, as
\begin{equation}\label{eq:Poisson_infinie}
\langle Lu-DV(u),D\Psi(u)\rangle+\frac{1}{2}\sum_{n=1}^{\infty}D^2\Psi(u).(e_n,e_n)=\varphi(u)-\mu_\star(\varphi),\quad \forall u\in H,
\end{equation}
where the unknown is the function $\Psi:H\to\R$. In the Poisson equation above, the first order derivative $D\Psi(u)\in H$ is interpreted as an element of $H$ thanks to Riesz's Theorem.

For an arbitrary function $\Psi$ of class $\mathcal{C}^2$ on $H$, it is not true in general that the left-hand side is well-defined, for all $u\in H$, or even when $u=u(t)$ is the diffusion process evaluated at a time $t\ge 0$. Indeed, $L$ is an unbounded operator, so $Lu$ is not an element of $H$ in general. Moreover, the series may not be convergent.

In fact, the Poisson equation may be solved and all the terms make sense thanks to regularity properties, which may be written in the form~\eqref{eq:Poisson_SPDE_estimate}, where auxiliary norms are introduced: for any $\alpha\in(0,1)$, and any $h\in H$, let
\[
\|h\|_{\alpha}^2=\sum_{n=1}^{\infty}\lambda_n^{2\alpha}|\langle h,e_n\rangle|^2\in[0,\infty]~,~\|h\|_{-\alpha}^2=\sum_{n=1}^{\infty}\lambda_n^{-2\alpha}|\langle h,e_n\rangle|^2<\infty.
\]
We refer to~\cite[Chapters~4,5]{Cerrai:01}, for general results concerning the smoothing properties of the transition semi-group in infinite dimension, and to~\cite[Proposition 6.1]{BK16}, and \cite[Chapter 4, Section 8]{Kopec:14}, for their application to the analysis of Poisson equations. Rigorous properties are often stated for spatial Galerkin approximations, with bounds not depending on the dimension. We do not provide such details here, and directly write the results in the Hilbert space $H$.

Using arguments from the references mentioned above, and taking care of the dependence with respect to the function $A$ to obtain uniform bounds on the set $\mathcal{A}$, generalizations of Propositions~\ref{propo:Poisson_1} and~\ref{propo:Poisson_2} are obtained.

\begin{propo}\label{propo:Poisson_SPDE}
Let $A\in\mathcal{A}$, and $\varphi:H\to \R$, of class $\mathcal{C}^\infty$, bounded and with bounded derivatives of any order.

There exists a unique solution $\Psi(A,\cdot)$ of the Poisson equation~\eqref{eq:Poisson},
\[
\langle Lu-D\bigl(V-A\circ\xi\bigr)(u),D\Psi(u)\rangle+\frac{1}{2}\sum_{n=1}^{\infty}D^2\Psi(u).(e_n,e_n)=e^{-A(\xi(u))}[\varphi(u)-\mu_\star(\varphi)],\quad \forall u\in H,
\]
with the condition $\int_H\Psi d\mu_\star=0$.

This solution is given by
\[
\Psi(A,u)=-\int_{0}^{\infty}\E_u\bigl[\overline{\varphi}^A\bigl(u(t)\bigr)]dt,
\]
for all $u\in H$, where $\overline{\varphi}^A(u)=e^{-A(\xi(u))}[\varphi(u)-\mu_\star(\varphi)]$.

Moreover, the following properties are satisfied.
\begin{itemize}
\item There exists $C(\varphi)\in(0,\infty)$ such that, for all $A\in\mathcal{A}$ and $u\in H$,
\[
|\Psi(A,u)|\le C(\varphi)(1+\|u\|_H^2).
\]
\item For every $\alpha\in(0,\frac{1}{2})$, there exists $C(\alpha,\varphi)\in(0,\infty)$, such that, for all $A\in\mathcal{A}$ and $u\in H$
\begin{equation}\label{eq:Poisson_SPDE_estimate}
\begin{cases}
|\langle D_u\Psi(A,u),h\rangle |\le C(\alpha,\varphi)(1+\|u\|_H^2)\|h\|_{-2\alpha},~\forall~h\in H,\\
|D_u^2\Psi(A,u).(h,k)|\le C(\alpha,\varphi)(1+\|u\|_H^2)\|h\|_{-\alpha}\|k\|_{-\alpha},~\forall~h,k\in H.
\end{cases}
\end{equation}
\item For every $\alpha\in(0,\frac{1}{4})$ and every $n\in\N$, there exists $C(\alpha,n,\varphi)\in(0,\infty)$, such that $\E\|u(t)\|_\alpha^n\le C(\alpha,n,\varphi)\bigl(1+\frac{\|u(0)\|_H}{t^\alpha}\bigr)^n$.
\item  The function $(t,u)\in [0,\infty)\times H\mapsto \Psi(A_t,u)$ is of class $\mathcal{C}^{1,2}$, and for every $u\in H$ and every $t\ge 0$, almost surely
\[
\big|\frac{\partial \Psi(A_t,u)}{\partial t}\big|\le \frac{C(1+\|u\|^2)}{1+t},
\]
where $\bigl(A_t\bigr)_{t\ge 0}$ is the $\mathcal{A}$-valued process defined in~\eqref{eq:ABP_SPDE}.
\end{itemize}

\end{propo}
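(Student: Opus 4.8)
The plan is to transpose the finite-dimensional arguments of Propositions~\ref{propo:Poisson_1} and~\ref{propo:Poisson_2} to the Hilbert-space setting, replacing the elementary estimates available there by the smoothing properties of the transition semigroup recalled in \cite{Cerrai:01,Kopec:14,BK16}, and carrying the dependence on $A$ throughout so that all constants are uniform over the set $\mathcal{A}$. The first step is to record the uniform exponential ergodicity of the biased SPDE~\eqref{eq:BP_SPDE}: repeating the synchronous-coupling dissipativity computation of Section~\ref{sec:model_SPDE}, and using that $u\mapsto D(A\circ\xi_\St)(u)$ and its derivatives are bounded and globally Lipschitz with constants controlled by $m$, $(M^{(k)})_k$ and the bounded derivatives of $\xi_\St$ (hence uniform for $A\in\mathcal{A}$), one obtains, with $\delta>0$ independent of $A$, the estimate $\E\|u^A(t)-v^A(t)\|_H^2\le e^{-\delta t}\|u_0-v_0\|_H^2$, whence $|\E_u[\psi(u^A(t))]-\mu_\star^A(\psi)|\le C\,\mathrm{Lip}(\psi)\,e^{-\delta t/2}(1+\|u\|_H)$ for bounded Lipschitz $\psi$, using the uniform moment bounds on $\mu_\star^A$ from Theorem~\ref{th:well-posed_SPDE}.

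Next, since $\overline{\varphi}^A=e^{-A\circ\xi_\St}(\varphi-\mu_\star(\varphi))$ is bounded and Lipschitz uniformly in $A\in\mathcal{A}$ and centered with respect to $\mu_\star^A$ — the latter being the generalization of~\eqref{eq:mu_muA}, valid here because $\mu_\star^A$ and $\mu_\star$ are the explicit Gibbs measures sharing the common Gaussian reference measure $\lambda$ — the integral $\Psi(A,u)=-\int_0^\infty\E_u[\overline{\varphi}^A(u^A(t))]\,dt$ converges absolutely with $|\Psi(A,u)|\le C(\varphi)(1+\|u\|_H)$, hence a fortiori satisfies the stated quadratic bound. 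Dynkin's formula and the ergodicity above show that $\Psi(A,\cdot)$ solves the Poisson equation, and that it is its unique solution once the normalization is imposed, exactly as in the sketch of Proposition~\ref{propo:Poisson_1}.

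For the derivative estimates~\eqref{eq:Poisson_SPDE_estimate} I would differentiate the representation formula: $\langle D_u\Psi(A,u),h\rangle=-\int_0^\infty\E[\langle D\overline{\varphi}^A(u^A(t)),\eta_t^{A,h}\rangle]\,dt$, where $\eta_t^{A,h}$ is the first variation process solving $\dot\eta_t=L\eta_t-D^2(V-A\circ\xi_\St)(u^A(t))\eta_t$, $\eta_0=h$; a second differentiation introduces the second variation process, with an extra forcing term involving $D^3(V-A\circ\xi_\St)$ and $(\eta_t^{A,h},\eta_t^{A,k})$. Combining the analyticity bound $\|(-L)^\beta e^{tL}\|_{H\to H}\le Ct^{-\beta}$ with the uniform (in $A\in\mathcal{A}$) boundedness of $D^2(V-A\circ\xi_\St)$ and $D^3(V-A\circ\xi_\St)$, a singular Gronwall argument on the mild formulations yields, for $\beta\in(0,1)$ and some $\delta>0$, $\|\eta_t^{A,h}\|_H\le C(\beta)(1+t^{-\beta})e^{-\delta t}\|h\|_{-\beta}$ and an analogous bound for the second variation process; integrating in $t$ (which forces $2\beta<1$, i.e. $\alpha<\tfrac12$) and using that $D\overline{\varphi}^A$, $D^2\overline{\varphi}^A$ are bounded and $\|u^A(t)\|_H$ has controlled moments gives~\eqref{eq:Poisson_SPDE_estimate}; this is precisely the route of \cite{Cerrai:01,Kopec:14,BK16}, the only new point being the uniformity in $A$. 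The moment bound $\E\|u(t)\|_\alpha^n\le C(1+\|u(0)\|_H t^{-\alpha})^n$ for $\alpha<\tfrac14$ is the standard spatial-regularity estimate: the deterministic part of the mild solution is smoothed by $e^{tL}$, while the stochastic convolution lies in $H^{2\alpha}$ for $2\alpha<\tfrac12$ by the factorization method, uniformly in the (Lipschitz) bias.

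Finally, for the bound on $\partial_t\Psi(A_t,u)$ I would regard $\Psi$ as a smooth map of the pair $(A,u)\in\mathcal{A}\times H$, with $\mathcal{A}$ a bounded subset of $\mathcal{C}^k(\Ma_\md)$ for $k$ large; then $\partial_t\Psi(A_t,u)=(D_A\Psi)(A_t,u)[\partial_t A_t]$, and differentiating the representation formula in the $A$-argument brings down only finitely many derivatives of $A$ (entering $D^2(A\circ\xi_\St)$ in the generator and $e^{-A\circ\xi_\St}$ in the source), all controlled uniformly on $\mathcal{A}$, so that $|(D_A\Psi)(A_t,u)[a]|\le C(1+\|u\|_H^2)\|a\|_{\mathcal{C}^k}$. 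On the other hand, from the ABP dynamics~\eqref{eq:ABP_SPDE} and the almost sure lower bound $1+\int_0^t e^{-A_r\circ\xi_\St(u(r))}\,dr\ge 1+mt$ of Theorem~\ref{th:well-posed_SPDE}, the total-variation norm of $\partial_t\overline{\mu}_t$ is $O(1/(1+t))$, which through the kernel $K$ and the lower bound $\min\overline{F}_t\ge m$ gives $\|\partial_t A_t\|_{\mathcal{C}^k}\le C/(1+t)$; multiplying the two estimates yields the claimed bound, and the joint $\mathcal{C}^{1,2}$ regularity in $(t,u)$ follows from the smoothness of the maps involved. The step I expect to be the main obstacle is the third one: establishing, uniformly over the family $\{D(V-A\circ\xi_\St):A\in\mathcal{A}\}$, the two-sided control of the variation processes — an integrable singularity $t^{-\beta}$ as $t\to 0^+$ together with exponential decay $e^{-\delta t}$ as $t\to\infty$ — in the presence of the random, time-inhomogeneous coefficient $u^A(t)$ appearing in the linearized equation; this requires carefully intertwining the smoothing of the analytic semigroup $e^{tL}$ with the dissipativity gap of $L-D^2(V-A\circ\xi_\St)$, and it is here that the results of \cite{Cerrai:01,Kopec:14,BK16} — originally proved for spatial Galerkin approximations with dimension-free bounds — must be imported and made uniform in $A$.
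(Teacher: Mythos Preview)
Your proposal is correct and follows essentially the same route as the paper: the integral representation of $\Psi$ via the semigroup, differentiation in $u$ leading to first and second variation processes $\eta^h$ and $\zeta^{h,k}$, combination of short-time smoothing (mild formulation, analytic semigroup bound) with long-time dissipativity (energy estimate exploiting~\eqref{eq:ass_V_SPDE}), and the bound on $\partial_t\Psi(A_t,\cdot)$ via $\|\partial_t A_t\|_{\mathcal{C}^k}\le C/(1+t)$. The paper separates the short-time and long-time estimates and glues them by the semigroup identity $P_t=P_1P_{t-1}$, whereas you state a single combined bound; this is only a presentational difference.

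The one place where more is needed than you indicate is the second-order estimate. Your phrase ``uniform boundedness of $D^3(V-A\circ\xi_\St)$'' is not correct as written: $V(u)=\int_0^1\mathcal{V}(u(x))\,dx$ is a Nemytskii functional, so the forcing term in the $\zeta^{h,k}$ equation is the pointwise product $\mathcal{V}^{(3)}(u)\,\eta^h\eta^k$, which lies only in $L^1(0,1)$ when $\eta^h,\eta^k\in H=L^2$, and $D^3V$ is \emph{not} a bounded trilinear form on $H\times H\times H$. The paper handles this explicitly: for the short-time bound it uses $\|e^{(t-s)L}\|_{\mathcal{L}(L^1,L^2)}\le C(t-s)^{-1/4-\epsilon}$ in the mild formulation, and for the long-time bound it uses the Gagliardo--Nirenberg inequality $\|\zeta\|_{L^\infty}\le C\|\zeta\|_{L^2}^{1/2}\|\zeta\|_{H^1}^{1/2}$ together with Young's inequality, absorbing the $H^1$ part into $\|(-L)^{1/2}\zeta\|^2$ on the left of the energy estimate at the price of an $\epsilon$ in the exponential rate. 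This is the genuinely infinite-dimensional step behind the references you cite, and the one place where a naive ``analogous bound'' would stall.
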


A sketch of proof is postponed to Appendix~\ref{sec:appendix_infinie}. More precisely, we focus there on the estimates~\eqref{eq:Poisson_SPDE_estimate}, with $A=0$ (to simplify the presentation), for $\alpha>0$, since they are the main novelty in the infinite dimensional framework.

The estimates~\eqref{eq:Poisson_SPDE_estimate} justify that all the terms in the left-hand side of~\eqref{eq:Poisson_infinie} make sense. First, if one assumes that $\|u\|_{\epsilon}<\infty$ for some $\epsilon>0$, choosing $2\alpha=1-\epsilon<1$, gives $|\langle Lu,D\Psi(u)\rangle|<\infty$. Second, $\sum_{n=1}^{\infty}\|e_n\|_{-\alpha}^{2}=\sum_{n=1}^{\infty}\lambda_n^{-2\alpha}<\infty$ for $\alpha>\frac14$.

Adapting the strategy of proof of Theorem~\ref{th:cv_as_phi}, developed in Section~\ref{sec:proof_decomp_error}, and using Proposition~\ref{propo:Poisson_SPDE} to control the terms, it is then straightforward to prove that
\[
\E\big|\overline{\mu}_t(\varphi)-\mu_\star(\varphi)\big|^2\le \frac{C(\varphi)}{t}\underset{t\to\infty}\to 0.
\]
The proof of the almost sure convergence result is obtained using the boundedness of $\varphi$, and the same argument as in the finite dimensional case. This concludes the proof of the first part of Theorem~\ref{th:cv_SPDE}. The second part of Theorem~\ref{th:cv_SPDE}, concerning the almost sure convergence of $A_t$, is proved exactly as Corollary~\ref{cor:cv_A}.

\bigskip

Thanks to the general framework developed in Section~\ref{sec:framework}, the ABP method is also applicable in the infinite dimensional setting, for metastable Stochastic PDEs.

\section*{Acknowledgments}

The authors would like to thank the referees for their valuable comments which helped us improve the presentation of this manuscript. They would also like to thank Tony~Leli\`evre and Gabriel~Stoltz for stimulating discussions on this article and comments on a preliminary version of this manuscript. The work is partially supported by SNF grants $200020 149871$ and $200021 163072$.

\begin{appendix}
\section{Results concerning Poisson equations}

\subsection{The finite dimensional case}\label{sec:appendix_finie}

The aim of this section is to give a proof of Proposition~\ref{propo:Poisson_2}, stated in Section~\ref{sec:proof_decomp_error}. More precisely, the key point is to prove that the estimates are uniform with respect to $A\in\mathcal{A}$, where $\mathcal{A}$ is defined by~\eqref{eq:fonctions_F}.

To simplify the presentation, the analysis is restricted to functions $\varphi$ which are bounded and have bounded derivatives of any order. The general case $\varphi\in\mathcal{C}$, having polynomial growth (see~\eqref{eq:C}), may be treated using weight functions, using Assumptions~\ref{ass:V_OL} and~\ref{ass:V_L}. Thanks to Property~\ref{prop:unif}, the weight functions may be chosen independently of $A\in\mathcal{A}$, hence estimates are uniform for $A\in\mathcal{A}$.

Let $\varphi$ be fixed, and recall the notation $\overline{\varphi}^A=e^{-A\circ\xi_\St}\bigl(\varphi-\mu_\star(\varphi)\bigr)$. Moreover, $\Psi(A,\cdot)$ is given by~\eqref{eq:Psi_c}.

\subsubsection{Auxiliary result: exponential convergence to equilibrium}

Let $W:\St\to \R_+$ be defined as follows. If the dynamics is given by the Brownian dynamics (Section~\ref{sec:OL}), set
\[
W(x)=\|x\|.
\]
If the dynamics is given by the Langevin dynamics (Section~\ref{sec:L}), set
\[
W(q,p)=\sqrt{ V(q) + Q(q,p)}
\]
with $Q(q,p) = \frac{\gamma^2}{4}\|q\|^2 + \frac{\gamma}{2} \langle q, p \rangle + \frac{1}{2} \|p\|^2$. Here we assume without loss of generality that $V \geq 0.$

The case of the extended dynamics (Section~\ref{sec:E}) is treated like the Brownian dynamics case.

Let $\bigl(P_t^A\bigr)_{t\ge 0}$ denote the semi-group associated with the SDE~\eqref{eq:BP}, with $A\in\mathcal{A}$. Recall that $\mu_\star^A$ defined by~\eqref{eq:muA} is the unique invariant distribution for this process.

The aim of this section is to prove that convergence to equilibrium is exponentially fast, uniformly with respect to $A\in\mathcal{A}$.
\begin{propo}\label{propo:AuxiliaryPoisson}
There exists $\vartheta>0$ and $C\in(0,\infty)$ such that for every measurable function $\varphi : \St\to \R$ with $\|\varphi\|_W := \sup \frac{|\varphi(x)|}{1+ W(x)} < \infty, A\in\mathcal{A}$ and $t\ge 0$,
\begin{equation}
\big|P_t^A \varphi(x) - \mu_\star^A(\varphi) \big|\le Ce^{-\vartheta t}(1+  W(x)) \|\varphi\|_W,
\end{equation}
In addition, for functions $\varphi:\St\to\R$ such that $\|\varphi\|_W<\infty$, almost surely
\begin{equation}\label{eq:ergoA}
\frac{1}{t}\int_0^t \varphi(X_r^A)dr\underset{t\to \infty}\to\int \varphi d\mu_\star^A.
\end{equation}
\end{propo}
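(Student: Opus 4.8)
~\textbf{Proof strategy for Proposition~\ref{propo:AuxiliaryPoisson}.}

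The plan is to establish a Lyapunov (Foster--Lyapunov/Meyn--Tweedie) drift condition together with a minorization (local Doeblin) condition, uniformly over $A\in\mathcal{A}$, and then invoke the standard geometric ergodicity theory (see Harris' theorem, or~\cite{MeynTweedie} style arguments adapted to continuous time). The key point, as emphasized in the statement, is that all the constants can be chosen independently of $A\in\mathcal{A}$, and this is precisely where Property~\ref{prop:unif} is used.

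First I would treat the drift condition. In the Brownian case, with $W(x)=\|x\|$, one applies the generator $\mathcal{L}_{\mathcal{X}}^A$ to a function of the form $\exp(\delta W^2)$ or $1+W^2$ (more convenient: work with $\mathcal{W}(x)=1+\|x\|^2$). Using the drift coefficient $-\nabla(V-A\circ\xi)$ and the bound from Property~\ref{prop:unif},
$\langle x,\nabla(V-A\circ\xi)(x)\rangle\ge \alpha_{V,\mathcal{A}}|x|^2-C_{V,\mathcal{A}}$,
one gets $\mathcal{L}_{\mathcal{X}}^A\mathcal{W}(x)\le -c_1\mathcal{W}(x)+c_2$ with $c_1,c_2>0$ depending only on $\alpha_{V,\mathcal{A}},C_{V,\mathcal{A}}$ and the dimension, hence on $\mathcal{A}$ but not on the individual $A$. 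In the Langevin case the dissipativity is degenerate, so I would instead use the modified quadratic form $Q(q,p)$ appearing in the definition of $W$ (this is the classical Wu/Talay-type Lyapunov function for the kinetic Langevin equation): the cross term $\frac{\gamma}{2}\langle q,p\rangle$ is introduced exactly to produce a coercive contribution $-B_{V,\mathcal{A}}|q|^2$ from $\langle q,\nabla(V-A\circ\xi)(q)\rangle\ge A_{V,\mathcal{A}}V(q)+B_{V,\mathcal{A}}|q|^2+C_{V,\mathcal{A}}$; together with the $-\gamma\|p\|^2$ damping term and the boundedness of the derivatives of $A\circ\xi$ (which is controlled since $\mathcal{M}_\md$ is compact and $A\in\mathcal{A}$), one again obtains $\mathcal{L}_{\mathcal{X}}^A(W^2)\le -c_1 W^2+c_2$ uniformly in $A\in\mathcal{A}$. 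The extended dynamics reduces to the Brownian case on the enlarged state space $E_d\times\mathcal{M}_\md$ since $\mathcal{M}_\md$ is compact. From the drift condition, Gronwall/Dynkin gives $\mathbb{E}_x[\mathcal{W}(X_t^A)]\le e^{-c_1 t}\mathcal{W}(x)+c_2/c_1$, uniformly in $A$; in particular the moment bounds $\sup_{A}\sup_{t}\int|x|^k\mu_\star^A(dx)<\infty$ (third item of Property~\ref{prop:unif}) and, after integrating, the last display~\eqref{eq:ergoA} will follow from the pointwise ergodic theorem once uniqueness of the invariant measure and convergence are established.

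Second, the minorization: on any sublevel set $\{W\le R\}$ (a compact set, using that $\xi$ has bounded derivatives in the noncompact case and that $\mathcal{M}_\md$ is compact), the transition kernel $P_1^A(x,\cdot)$ satisfies $P_1^A(x,\cdot)\ge \eta\,\nu(\cdot)$ for some probability measure $\nu$ and some $\eta>0$, both uniform in $A\in\mathcal{A}$. For the Brownian dynamics this follows from uniform parabolic (Gaussian) lower bounds on the transition density, the drift being uniformly bounded on compacts thanks to $A\in\mathcal{A}$ (bounded $\nabla A$) and the local boundedness of $\nabla V$. For the hypoelliptic Langevin dynamics one invokes Hörmander's bracket condition, which holds here since $\{q,p\}$ generate the tangent space; the resulting density lower bound on compacts is again uniform over the precompact family of drifts indexed by $\mathcal{A}$, by a compactness/continuity argument. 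Combining the uniform drift and uniform minorization through Harris' theorem (in its quantitative form, e.g.\ Hairer--Mattingly) yields a uniform spectral gap: there exist $\vartheta>0$, $C<\infty$ independent of $A$ such that
\[
\big|P_t^A\varphi(x)-\mu_\star^A(\varphi)\big|\le C e^{-\vartheta t}(1+W(x))\|\varphi\|_W,
\]
which is the claimed estimate; uniqueness of $\mu_\star^A$ is a byproduct.

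The main obstacle I anticipate is the \emph{uniformity in $A$} of the minorization constant in the Langevin (hypoelliptic) case: one must check that the Hörmander-type lower bounds on the transition density on compact sets do not degenerate as $A$ ranges over $\mathcal{A}$. This is handled by noting that $\mathcal{A}$ is, by~\eqref{eq:fonctions_F} and~\eqref{eq:fonctions_F}'s definition via $\mathcal{F}$, contained in a bounded subset of $\mathcal{C}^k(\mathcal{M}_\md)$ for every $k$, so $\{\mathcal{D}(V,A):A\in\mathcal{A}\}$ is relatively compact in $\mathcal{C}^k_{\mathrm{loc}}$; the density lower bound depends continuously (lower semicontinuously) on the coefficients in this topology, so the infimum over $A\in\mathcal{A}$ is still positive. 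The drift side, by contrast, is comparatively routine once Property~\ref{prop:unif} is in hand, since that proposition was stated precisely to absorb the dependence on $A$ into fixed constants $\alpha_{V,\mathcal{A}},A_{V,\mathcal{A}},B_{V,\mathcal{A}},C_{V,\mathcal{A}}$.
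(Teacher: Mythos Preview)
Your overall strategy---uniform Lyapunov drift plus uniform minorization, then Harris' theorem---matches the paper's, and your treatment of the drift condition via Property~\ref{prop:unif} is essentially the same as theirs. The genuine difference is in how the uniformity over $A\in\mathcal{A}$ of the minorization is obtained.

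You argue directly: the family $\{\mathcal{D}(V,A):A\in\mathcal{A}\}$ is relatively compact in $\mathcal{C}^k_{\mathrm{loc}}$, and (hypo)elliptic density lower bounds on compacts depend lower-semicontinuously on the coefficients, so the infimum over $A$ is positive. This is plausible but, in the hypoelliptic Langevin case, not entirely off-the-shelf---quantifying how H\"ormander-type lower bounds vary with the drift requires some care. The paper sidesteps this: it proves minorization (hence a Harris contraction for $P_{T_A}^A$) for each \emph{fixed} $A$, and then establishes a local stability estimate by Girsanov's theorem, showing that $|\delta_x P_T^A-\delta_x P_T^B|_{1,W}\to 0$ uniformly on Lyapunov sublevel sets as $\|A-B\|_{C^1}\to 0$. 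A perturbed form of Harris' theorem (their Lemma~\ref{Harris}, following Hairer--Mattingly) then transfers the contraction from $P_T^A$ to all $P_T^B$ with $B$ in a $C^1$-neighborhood of $A$. Since $\mathcal{A}$ is relatively compact in $C^1$ by Ascoli, finitely many such neighborhoods cover $\mathcal{A}$, and one takes the worst constants.

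Both routes rely on the same compactness of $\mathcal{A}$; the paper's buys a more explicit, softer argument (Girsanov plus a perturbed Harris lemma) in exchange for a slightly non-standard Harris statement, whereas yours is cleaner in outline but leaves the continuity-of-the-density-lower-bound step to be fleshed out. For the almost sure ergodic limit~\eqref{eq:ergoA}, note that the paper obtains it not from the exponential estimate but directly from positive recurrence (strong Feller plus invariant measure with full support), which is slightly more elementary than invoking a weighted pointwise ergodic theorem.
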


We first state a version of Harris Theorem. Let $\mathcal{E}$ be a measurable set, and $W:\mathcal{E} \mapsto \R_+$ be a measurable map. Following~\cite{Hairer-Mattingly}, 
define for every $0 \leq \beta  \leq 1$ and $f  : \mathcal{E} \mapsto \R$ measurable (possibly unbounded), $$\|f\|_{\beta,W} =   \sup_{x,y} \frac{|f(x)-f(y)|}{2 + \beta (W(x)+W(y))}.$$

\begin{lemma}\label{Harris}
Let $P$ and $Q$ be two Markov kernels over $\mathcal{E}.$ Assume there exist $0 \le \rho < 1, \kappa \ge 0, R \ge \frac{2 \kappa}{1-\rho}, \epsilon > 0, \delta \geq 0$ and $\psi$ a probability distribution over $\mathcal{E}$ such that
\begin{enumerate}
\item $P W \leq \rho W + \kappa, \, Q W \leq \rho W + \kappa;$
\item For all $x \in W_R := \{y \in \mathcal{E}: \: W(y) \leq R\}$ $P(x,dy) \geq \epsilon \psi(dy)$ and $|\delta_x P - \delta_x Q|_{1,W} \leq \delta.$
\end{enumerate}
Then, there exist $0 \leq \beta \leq 1$ and $0 \leq \theta < 1$ such that
$$\|P \|_{\beta, W} \leq  \theta, \mbox{ and } \|Q \|_{\beta, W} \leq  \theta + \delta.$$ Here $\|P\|_{\beta,W}$ stands for $\sup\{ \|P f \|_{\beta, W} : \: \|f\|_{\beta, W} \leq 1\}.$
\end{lemma}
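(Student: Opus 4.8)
The statement to be proved is Lemma~\ref{Harris}, a weighted version of Harris's theorem for two Markov kernels $P$ and $Q$ that are close in a weighted total-variation sense. The strategy is classical (following Hairer--Mattingly~\cite{Hairer-Mattingly}), and the plan is to reduce the two-kernel statement to the standard one-kernel contraction estimate, then track the additional error term coming from $\delta$. First I would introduce, for a parameter $\beta>0$ to be chosen, the weighted metric $d_\beta(x,y) = \bigl(2 + \beta(W(x)+W(y))\bigr)\mathbf{1}_{x\ne y}$ on $\mathcal{E}$, so that $\|f\|_{\beta,W}$ is exactly the Lipschitz seminorm of $f$ with respect to $d_\beta$, and $\|P\|_{\beta,W}\le\theta$ is equivalent to the Wasserstein-type contraction $\mathcal{W}_{d_\beta}(\delta_x P, \delta_y P)\le \theta\, d_\beta(x,y)$ for all $x,y$ (via the Kantorovich--Rubinstein duality, noting that $d_\beta$ is a metric up to the constant factor, so duality applies to the seminorm directly).

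The core argument then splits into two cases according to whether $x,y$ lie in the small set $W_R$ or not. If $W(x)+W(y)$ is large (say $> R$ for a suitable threshold), one uses only the Lyapunov condition (1): $P W + \text{(symmetric term)} \le \rho W + \kappa$ forces the ``large values of $W$'' contribution to contract, and since $R \ge 2\kappa/(1-\rho)$ one gets a genuine strict contraction of the $\beta$-weighted part with some rate $\rho' = \frac{\rho+1}{2} < 1$ (the standard trick of absorbing the additive constant $\kappa$ into the Lyapunov term when $W$ is large). If instead $x,y\in W_R$, one uses the minorization $P(x,\cdot)\ge\epsilon\psi$: writing $P(x,\cdot) = \epsilon\psi + (1-\epsilon)Q_x$ for residual sub-probability kernels, one couples the two chains through the common component $\psi$ with probability $\epsilon$, which contracts the constant ``$2$'' part of $d_\beta$ by a factor $1-\epsilon$; on the diagonal block $W_R\times W_R$ the weighted part $\beta(W(x)+W(y))$ is bounded by $2\beta R$, hence for $\beta$ small it is a small perturbation. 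Balancing the two cases — choosing $\beta$ small enough that in the small set the weight contributes negligibly, and large enough that in the far region the constant ``$2$'' is dominated by the weight — yields a single $\theta<1$ with $\|P\|_{\beta,W}\le\theta$.

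For the kernel $Q$, I would not redo the contraction from scratch but exploit $\|\delta_xP-\delta_xQ\|_{1,W}\le\delta$ together with the Lyapunov bound $QW\le\rho W+\kappa$ (so $Q$ preserves the same Lyapunov structure). Writing, for any $f$ with $\|f\|_{\beta,W}\le 1$,
\[
|Qf(x)-Qf(y)| \le |Pf(x)-Pf(y)| + |(Q-P)f(x)| + |(Q-P)f(y)|,
\]
the first term is bounded by $\theta\, d_\beta(x,y)$ from the already-established contraction of $P$, and each of the remaining two is controlled by $\delta$ against $\|f\|_{1,W} \le$ (a fixed multiple of) $\|f\|_{\beta,W}$ — here one uses $\beta\le 1$ so that the $(1,W)$-seminorm dominates the $(\beta,W)$-seminorm up to constants, and one normalizes $f$ by subtracting a constant to control $|(Q-P)f|$ by the seminorm times the total mass difference, which vanishes since $Q-P$ has zero total mass. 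After dividing by $2+\beta(W(x)+W(y))\ge 2$ this produces the claimed bound $\|Q\|_{\beta,W}\le\theta+\delta$ (up to adjusting the constant absorbed into $\delta$, or simply re-defining $\delta$ by that fixed factor at the outset). The main obstacle is the bookkeeping of constants: one must choose $\beta$, $\rho'$, and the partition threshold in a mutually consistent way so that \emph{one} value of $\theta<1$ works in all cases, and simultaneously ensure the $Q$-perturbation is additive in $\delta$ rather than multiplicative — this is where the condition $R\ge 2\kappa/(1-\rho)$ and the hypothesis $\beta\le1$ are used crucially. Since this is a known result, I would cite~\cite{Hairer-Mattingly} for the detailed constant-chasing and present only the structure above.
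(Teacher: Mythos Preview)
Your plan for $P$ is exactly what the paper does: it simply cites Theorem~3.1 of~\cite{Hairer-Mattingly}, and your sketch of the two-case argument (Lyapunov drift when $W(x)+W(y)$ is large, minorization coupling on $W_R\times W_R$, then balance $\beta$) is the standard proof there.

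For $Q$ there is a gap in your proposal. You write the triangle inequality
\[
|Qf(x)-Qf(y)| \le |Pf(x)-Pf(y)| + |(Q-P)f(x)| + |(Q-P)f(y)|
\]
for \emph{all} $x,y$, and then invoke $|\delta_xP-\delta_xQ|_{1,W}\le\delta$ to bound the last two terms. But hypothesis~(2) only gives this closeness for $x\in W_R$; for $x\notin W_R$ you have no control whatsoever on $(Q-P)f(x)$, and the Lyapunov bound $QW\le\rho W+\kappa$ alone does not produce one. So the global triangle-inequality route does not close.

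The fix is precisely what the paper hints at when it says the estimate for $Q$ ``follows from the \emph{proof}'' of the one for $P$: you must rerun the two-case argument for $Q$ itself. In the large-$W$ region the Lyapunov condition $QW\le\rho W+\kappa$ is identical to that for $P$, so the same contraction constant $\theta$ works there with no $\delta$. Only in the small-set case $x,y\in W_R$ do you use your triangle inequality; there both perturbation terms are legitimately bounded by $\delta$ (since $\|f\|_{1,W}\le\|f\|_{\beta,W}\le 1$ when $\beta\le1$), and dividing by $d_\beta(x,y)\ge 2$ yields the extra $+\delta$. Combining the two cases gives $\|Q\|_{\beta,W}\le\theta+\delta$ as claimed. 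Your mention of the Lyapunov bound on $Q$ suggests you were circling this, but the write-up needs the explicit case split.
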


The first statement (concerning $P$) rephrases Theorem 3.1 in \cite{Hairer-Mattingly}. The proof of the second one (concerning $Q$) easily follows from the proof of the first one.

\begin{lemma}\label{lem:langevin}
For each $A \in \mathcal{A}$ there exists $T_A> 0, 0 \leq \beta_A \leq 1, 0 \leq \theta_A < 1$ and $\delta_A > 0$ such that for all $B \in \mathcal{A}$,
$$\|A-B\|_{C^1} \leq \delta \Rightarrow \|P_{T}^B\|_{\beta, W} \leq \theta,$$
where $\|A-B\|_{C^1}=\max |A-B|+\max|\partial A-\partial B|$.
\end{lemma}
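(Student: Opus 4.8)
The statement is a uniform-in-perturbation version of the Harris-type estimate: for a fixed $A\in\mathcal{A}$, the diffusion $X^A$ converges exponentially fast to its invariant measure, and this property is stable under small $\mathcal{C}^1$-perturbations of $A$. The plan is to verify the two hypotheses of Lemma~\ref{Harris} with $P=P_{T}^A$ and $Q=P_{T}^B$ for a suitable time horizon $T=T_A$ and for all $B$ with $\|A-B\|_{\mathcal{C}^1}$ small, then invoke the conclusion of that lemma. The Lyapunov condition (1) is handled by Proposition~\ref{prop:unif}: for the Brownian (and extended) case the drift coefficient $\Dr(V,A)$ satisfies $\langle x,\Dr(V,A)(x)\rangle\le -\alpha_{V,\mathcal{A}}|x|^2+C_{V,\mathcal{A}}$ \emph{uniformly in $A\in\mathcal{A}$}, which by It\^o's formula applied to $W(x)=\|x\|$ (or $\|x\|^2$, then taking square roots) gives $P_t^A W\le \rho_t W+\kappa_t$ with $\rho_t<1$ for $t$ large enough, and the constants are uniform over $A\in\mathcal{A}$; in particular the same $\rho,\kappa$ work for every $B\in\mathcal{A}$, so condition (1) holds for both $P$ and $Q$ with $\rho,\kappa$ independent of the particular element of $\mathcal{A}$. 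For the Langevin case one uses instead the quadratic Lyapunov function $W^2 = V(q)+Q(q,p)$ and the second bullet of Proposition~\ref{prop:unif}; the standard hypoelliptic Lyapunov computation (as in Mattingly--Stuart--Higham or Talay) gives the geometric drift, again with constants uniform in $A\in\mathcal{A}$ because the relevant inequalities on $\nabla(V-A\circ\xi)$ are uniform.

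Next I would establish the minorization/smoothing part of condition (2). Fix $A\in\mathcal{A}$ and choose $R\ge \frac{2\kappa}{1-\rho}$, so that $W_R$ is a compact sublevel set. Since $\Ma_\md$ is compact and the functions in $\mathcal{A}$ have derivatives bounded by the fixed constants $M^{(k)}$ appearing in~\eqref{eq:fonctions_F}, the drift $\Dr(V,B)$ and all its derivatives are bounded on $W_R$ \emph{uniformly over all $B\in\mathcal{A}$}. In the elliptic (Brownian, extended) case, a Gaussian lower bound on the transition density at time $T=T_A$ on $W_R$ follows from standard parabolic estimates (e.g. Aronson-type bounds), and the lower-bound constant $\epsilon$ can be taken uniform over $\{B:\|A-B\|_{\mathcal{C}^1}\le 1\}\subset\mathcal{A}$, since all the relevant coefficient norms are uniformly controlled. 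In the Langevin (hypoelliptic) case one uses the controllability of the associated control system together with a support theorem / Malliavin calculus argument to get a minorization $P_T^B(x,\cdot)\ge\epsilon\psi(\cdot)$ for $x\in W_R$, again uniform in $B$ near $A$; here $T_A$ must be chosen large enough that the control system is controllable, which is why $T$ is allowed to depend on $A$ only through $R$, hence ultimately only through $\mathcal{A}$.

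For the perturbation bound $|\delta_x P_T^A-\delta_x P_T^B|_{1,W}\le\delta$ when $\|A-B\|_{\mathcal{C}^1}\le\delta'$, I would use a coupling / Gronwall estimate: let $X^A_t$ and $X^B_t$ be driven by the same Wiener process with the same initial condition $x$; then $X^A_t-X^B_t$ solves an ODE/SDE whose source term is $\Dr(V,A)(X^A_t)-\Dr(V,B)(X^B_t)$, which splits into $\Dr(V,B)(X^A_t)-\Dr(V,B)(X^B_t)$ (Lipschitz in the state, uniformly in $B\in\mathcal{A}$, at least locally — using the Lyapunov moment bounds to control excursions) and $\Dr(V,A)(X^A_t)-\Dr(V,B)(X^A_t)$, whose size is $O(\|A-B\|_{\mathcal{C}^1})$ because $\Dr(V,\cdot)$ depends on $A$ only through $\nabla A$ composed with the bounded $\xi$ (and its derivatives). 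A Gronwall argument on $[0,T_A]$, combined with the uniform moment bounds $\sup_{B\in\mathcal{A},t\le T_A}\E\,W(X^B_t)^p<\infty$, then yields $\E\big[\,|X^A_{T_A}-X^B_{T_A}|\,(1+W(X^A_{T_A})+W(X^B_{T_A}))\big]\le C_A\|A-B\|_{\mathcal{C}^1}$, which after the elementary inequality $|f(x)-f(y)|\le \|f\|_{1,W}(2+W(x)+W(y))$ bounds $|\delta_xP_T^A-\delta_xP_T^B|_{1,W}$ by $C_A\|A-B\|_{\mathcal{C}^1}=:\delta$; shrinking $\delta'=\|A-B\|_{\mathcal{C}^1}$ makes $\delta$ as small as needed (and in particular makes $R\ge\frac{2\kappa}{1-\rho}$ consistent with the minorization set). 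With (1) and (2) verified, Lemma~\ref{Harris} delivers $\beta_A\in[0,1]$ and $\theta_A<1$ with $\|P_{T_A}^A\|_{\beta_A,W}\le\theta_A$ and $\|P_{T_A}^B\|_{\beta_A,W}\le\theta_A+\delta$; choosing $\delta'$ small enough that $\delta<1-\theta_A$ (so that $\theta:=\theta_A+\delta<1$) completes the proof. \emph{The main obstacle} I anticipate is obtaining the minorization constant $\epsilon$ \emph{uniformly} over the perturbed kernels $P_T^B$, and, in the hypoelliptic Langevin case, making sure the controllability time $T_A$ and the support-theorem lower bound are stable under $\mathcal{C}^1$-small changes of the bias; this is where one genuinely needs the uniform control of coefficients on the compact set $W_R$ provided by Proposition~\ref{prop:unif} and the definition of $\mathcal{A}$, rather than soft perturbation arguments.
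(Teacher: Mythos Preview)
Your overall architecture matches the paper's: verify the two hypotheses of Lemma~\ref{Harris} for $P=P_{T}^{A}$ and $Q=P_{T}^{B}$ and read off the conclusion. The Lyapunov part is fine, and indeed the constants $\rho,\kappa$ are uniform over $\mathcal{A}$ exactly as you say. Two remarks, one minor and one a genuine gap.

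\emph{Minorization.} You work harder than necessary: condition~(2) in Lemma~\ref{Harris} asks for a minorization of $P$ only, not of $Q$. The paper simply fixes $A$, uses (hypo)ellipticity to get a smooth positive density for $P_{t}^{A}$, and deduces $P_{T}^{A}(x,\cdot)\ge \epsilon\,\mathds{1}_{V}(\cdot)\,dy$ on the compact set $W_{R}$ for some $T=T_A$. No uniformity in $B$ is needed here, which sidesteps the ``main obstacle'' you flag.

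\emph{Perturbation bound.} Here your argument has a real gap. The quantity $|\delta_{x}P_{T}^{A}-\delta_{x}P_{T}^{B}|_{1,W}$ is a \emph{weighted total variation} norm: test functions with $\|f\|_{1,W}\le 1$ satisfy only $|f(x)-f(y)|\le 2+W(x)+W(y)$, with no dependence on $|x-y|$. Such $f$ need not even be continuous. Consequently, knowing that the synchronously coupled pair $(X_{T}^{A},X_{T}^{B})$ is close in $L^{1}$ (which is what your Gronwall estimate gives) does \emph{not} bound $\E|f(X_{T}^{A})-f(X_{T}^{B})|$: a step function $f$ can separate arbitrarily close points by an amount of order $2+W(X_{T}^{A})+W(X_{T}^{B})$. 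Your sentence ``which after the elementary inequality $|f(x)-f(y)|\le \|f\|_{1,W}(2+W(x)+W(y))$ bounds\ldots'' is precisely where the argument breaks: that inequality contains no factor $|x-y|$, so the coupling distance never enters.

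The paper closes this gap with Girsanov's theorem: since the diffusion coefficient $\Sigma$ is the same for both dynamics and only the drift changes by $\nabla\bigl((A-B)\circ\xi\bigr)$, one writes
\[
P_{T}^{A}f(x)-P_{T}^{B}f(x)=\E_{x}\bigl[f(X_{T}^{A})(1-M_{T})\bigr],
\]
with $M_{T}$ the Girsanov exponential martingale driven by $u_{s}=\nabla((A-B)\circ\xi)(X_{s}^{A})$ (Brownian case; the Langevin case is analogous). Then Cauchy--Schwarz, the Lyapunov bound $P_{T}^{A}W^{2}(x)\le R^{2}+\kappa^{2}$ on $W_{R}$, and the elementary estimate $\E_{x}(M_{T}^{2})\le \exp\bigl(c_{T}\|A-B\|_{C^{1}}^{2}\bigr)$ give $|\delta_{x}P_{T}^{A}-\delta_{x}P_{T}^{B}|_{1,W}\to 0$ uniformly on $W_{R}$ as $\|A-B\|_{C^{1}}\to 0$. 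This is the right tool because Girsanov produces exactly a density ratio, hence controls total variation directly.
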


\begin{proof}
Let $V_A(q) = V(q) - A(\xi(q))$. Replacing $V$ by $V + c$ for some $c > 0$ we can assume without loss of generality  that $V_A \geq 0$ for all $A \in\mathcal{A}.$
Thanks to Property~\ref{prop:unif}, there exist positive constants $\alpha=\alpha_{\mathcal{A}}, \kappa=\kappa_{\mathcal{A}}$ such that for all $A \in\mathcal{A}$
$$\mathcal{L}^A W_A^2 \leq - 2 \alpha W_A^2 + 2 \alpha \kappa^2,$$
where $\mathcal{L}^A$ is the infinitesimal generator of the SDE~\eqref{eq:BP} and $W_A$ is defined like $W$ with $V_A$ in place of $V$. Then, by standard It\^o calculus,
\begin{equation}
\label{eq:lyap0}
P_t^A W_A^2 \leq e^{-2\alpha t} W_A^2 + \kappa^2.
\end{equation}
Replacing $\kappa^2$ by $\kappa^2 + 2 (|m|\wedge|M|)$ and using the fact that  $\|W^2-W^2_A\|  \leq \|A\|_{\infty} \leq |m|\wedge|M|,$  we then
 obtain
\begin{equation}
\label{eq:lyap1}
P_t^A W^2 \leq e^{-2\alpha t} W^2 + \kappa^2.
\end{equation}
Also, by H\"older inequality,
\begin{equation}
\label{eq:lyap2}
P_t^A W \leq  \sqrt{e^{-2\alpha t} W^2 + \kappa^2} \leq e^{- \alpha t} W + \kappa.
\end{equation}

By classical ellipticity (Brownian) or hypoellipticity (Langevin) results (see e.g~\cite{Ichihara}), for any given $A\in\mathcal{A}$:
\begin{enumerate}
 \item[(a)] For all $t > 0$ there exists a smooth function $(x,y) \mapsto p_t^A(x,y)$ such that
 $$P_t^A(x,dy) = p_t^A(x,y)dy$$
 \item[(b)] $(P_t^A)_{t\ge 0}$ is a strong Feller semi-group.
\end{enumerate}

Given $A\in\mathcal{A}, x_0 \in \St$ and $t_0 > 0$ one can then find $y_0 \in S$ such that
 \begin{equation}
 \label{ppos}
 p_{t_0}^{A}(x_0,y_0) > 0
 \end{equation}
The strong Feller property combined with the existence of an invariant probability having full support  (here $\mu_\star^A$) makes $(P_t^A)$ positively recurrent (see e.g~ \cite{Ichihara}, Section 5). In particular, the almost sure convergence property~\eqref{eq:ergoA} is satisfied, and for all $x \in\St$ and every neighborhood $U$ of $x$, there exists $\tau > 0$ such that
 \begin{equation}
 \label{access}
 P_{\tau}^{A}(x,U) > 0.
 \end{equation}
Using~\eqref{ppos} and~\eqref{access}, it is then proved that, for every compact set $K \subset \St$, there exist $T > 0$, a bounded open set $V$ and $\epsilon > 0$, such that
$$P_T(x,dy) \geq \epsilon \mathds{1}_V(y) dy,~\forall x\in K.$$
Applying Lemma~\ref{Harris}, with $P = P_T^A$ and $K = W_R$,
$$\|P_T^A\|_{\beta, W} < \theta$$ for some $0\le \beta\le 1$ and $0 \leq \theta < 1.$

We now claim that
$|\delta_x P_T^A - \delta_x P_T^B|_{1,W} \to 0$ uniformly in $x \in W_R$ when $\|A-B\|_{C^1} \to 0.$

Let $f$ be such that $\|f\|_{1,W} \leq 1.$  Replacing $f$ by $f - f(x_0)$, without loss of generality it is assumed that $|f(x)|    \leq C + W(x)$, with $C = 2 + W(x_0).$
By Girsanov Theorem,
$$P^A_T f(x) - P^B_T f(x) = \E_x (f(X_T^A) - f(X_T^B)) = \E_x (f(X_T^A)(1- M_T))$$ where $(M_t)$ is the martingale defined as $$M_t = \exp{( - \int_0^t \langle u_s, d\tilde{W}_s \rangle - \frac{1}{2}  \int_0^t \|u_s\|^2 ds)}$$ and $u_s = \nabla (A \circ \xi - B\circ \xi)(X_s^B)$ (Brownian case), $u_s = (2\gamma)^{-1/2}(\nabla (A \circ \xi_\St - B\circ \xi_\St)(X_s^B))$ (Langevin case). Thus, for all $x \in W_R$
$$|P^A_T f(x) - P^B_T f(x)| \leq \E_x (( C + W(X_T^A))|1-M_T|) \leq (C +   \sqrt{P_T^A W^2(x)}) \sqrt{ \E_x(M_T^2 -1)}.$$
$$\leq (C + R + \kappa) \sqrt{ \E_x(M_T^2 -1)},$$
thanks to H\"older inequality and to \eqref{eq:lyap1}. Observe that $M_t^2 = \tilde{M}_t e^{\int_0^t \|u_s\|^2 ds}$ where $(\tilde{M}_t)$ is a nonnegative martingale.
Therefore  $1 \leq \E(M_T^2) \leq e^{c_T \|A-B\|^2_{C^1}}$ with $c_T = T \max(1,(2\gamma)^{-1})\|D\xi\|^2.$

This concludes the proof of the claim. The result then follows from applying Lemma~\ref{Harris}.
\end{proof}

We are now in position to conclude.
\begin{proof}[Proof of Proposition~\ref{propo:AuxiliaryPoisson}]
By Ascoli theorem, $\mathcal{A}$ is relatively compact for the $C^1$ topology. Thus, thanks to Lemma~\ref{lem:langevin},
 there exist a finite covering of  $\mathcal{A}$ by open sets (for the $C^1$ topology) $O_1, \ldots, O_N$ (i.e~  $\mathcal{A} \subset \cup_{i = 1}^N O_i$),
and parameters $0 \leq \beta_i \leq 1, t_i > 0,$ and $0 \leq \theta_i < 1,$ such that for all $A \in O_i$
$$ \|P_{t_i}^A\|_{\beta_i,W} \leq \theta_i.$$
Let $\theta=\underset{i=1,\ldots,N}\max \theta_i<1$. Note that
for all $\beta > 0$
$$\|\varphi\|_{1,W} \leq \|\varphi\|_{\beta,W} \leq \frac{1}{\beta} \|\varphi\|_{1,W} \leq \frac{1}{\beta} \|\varphi\|_{W},$$
while, by (\ref{eq:lyap2}),  for all $r \geq 0$
$$\|P_r^A \varphi\|_{W} \leq \|\varphi\|_{W} (1 + \kappa).$$ 
Thus, for $A \in O_i, t = k t_i + r, k \in \N$ and $0 \leq r < t_i,$
$$\|P_t^A \varphi\|_{1,W} \leq \|P_t^A \varphi\|_{\beta_i,W} \leq \theta^k \|P_r^A \varphi\|_{\beta_i,W} \leq \frac{\theta^k}{\beta_i} \|\varphi\|_{W} (1 + \kappa).$$
That is $$\|P_t^A \varphi\|_{1,W} \leq e^{-\vartheta t} C \|\varphi\|_W$$ with $\vartheta = \min_i \frac{-\log(\theta)}{t_i}$ and $C = \max_i \frac{1+\kappa}{\beta_i \theta}.$
Equivalently, for all $x,y$,
$$|P_t^A \varphi(x) - P_t^A \varphi(y)| \leq  e^{-\vartheta t} C \|\varphi\|_W (2 + W(x) + W(y)).$$ Hence, integrating in $y$,
$$|P_t^A \varphi(x) - \mu_A^*(\varphi)| \leq  e^{-\vartheta t} C \|\varphi\|_W (2 + W(x) + \mu_A^*W) \leq  e^{-\vartheta t} C \|\varphi\|_W (2 + W(x) + \kappa).$$
This concludes the proof.
\end{proof}

\subsubsection{Proof of Proposition~\ref{propo:Poisson_2}}

In the proof below, the values of $C\in(0,\infty)$ and $p\in\N^\star$ may change from line to line. Note that if $\varphi$ is bounded, then $\|\varphi\|_W\le \|\varphi\|_{\infty}=\sup \varphi(x)$.

The properties of $V$ given by Assumptions~\ref{ass:V_OL} and~\ref{ass:V_L} play a key role in the estimate. Recall that Property~\ref{prop:unif} then allows to get estimates which are uniform with respect to $A\in\mathcal{A}$. As already explained, the technical computations are not reported here.
\begin{itemize}
\item[(i)] Thanks to Property~\ref{prop:unif}, and to Proposition~\ref{propo:AuxiliaryPoisson}, applied with $\varphi=\overline{\varphi}^A$, there exist $\vartheta\in(0,\infty)$, $C(\varphi)\in (0,\infty)$ and $p\in\N^\star$, such that for every $A\in\mathcal{A}$, then for all $x\in\St$ and $t\ge 0$, one has
\begin{equation}\label{eq:expo_conv}
\big|\E_x\bigl[\overline{\varphi}^{A}(X_t^A)\bigr]\big|\le C\|\varphi\|_\infty e^{-\vartheta t}\bigl(1+W(x)\bigr).
\end{equation}
Integrating from $t=0$ to $t=\infty$, using~\eqref{eq:Psi_c} and the polynomial growth assumption on $V$, gives~\eqref{eq:propo_Poisson_2_Linfty}.

\item[(ii)] 
The inequality~\eqref{eq:expo_conv} may be rewritten as follows: for all $A\in\mathcal{A}$, $x\in\St$ and $t\ge 0$,
\begin{equation}\label{eq:expo_conv_2}
\|P_t^A\overline{\varphi}^A\|_{W}\le C\|\varphi\|_\infty e^{-\vartheta t},
\end{equation}
where we recall that $\bigl(P_t^A\bigr)_{t\ge 0}$ is the transition semi-group associated with $X^A$. The elliptic and hypoelliptic need to be treated separately.

Consider first the elliptic case (Brownian dynamics). The idea is to adapt the arguments in~\cite[Chapter~2, Section~6]{Kopec:14}, and to check that all estimates are uniform with respect to $A\in\mathcal{A}$. First, by direct estimates of the derivatives (using in particular the semi-convexity property of $V$), when $t\in[0,1]$,
\[
|\nabla_x P_t^A\overline{\varphi}^A(x)|\le C(1+W(x))\|\nabla_x \overline{\varphi}^A\|_\infty\le C(1+W(x))(\|\varphi\|_\infty+\|\nabla_x\varphi\|_\infty).
\]
Second, for $t\ge 1$, let $\phi_t^A=P_{t-1}^A\overline{\varphi}^A$. Using the semi-group property, and the Bismut-Elworthy-Li formula, with constants which do not depend on $A\in\mathcal{A}$,
\[
|\nabla_x P_t^A\overline{\varphi}^A(x)|=|\nabla_x P_1\phi_t^A(x)|\le C\|\phi_t^A\|_W(1+W(x)).
\]
Using~\eqref{eq:expo_conv_2} to have an estimate of $\|\phi_t^A\|_W$, then integrating separately from $t=0$ to $t=1$ and from $t=1$ to $t=\infty$ gives~\eqref{eq:propo_Poisson_2_Dy}.

Consider now the hypoelliptic case (Langevin dynamics). The idea is to adapt the arguments in~\cite[Chapter~3, Section~6]{Kopec:14}, and to check that all estimates are uniform with respect to $A\in\mathcal{A}$. Again~\eqref{eq:expo_conv_2} is a fundamental ingredient. Estimates in Sobolev norms of $P_t^A\overline{\varphi}^A$ and of its derivatives are obtained. Then pointwise estimates are obtained using a Sobolev imbedding theorem. All the estimates are uniform with respect to $A\in\mathcal{A}$. The long and technical calculations are omitted.

\item[(iii)] Since $F_t\in\mathcal{F}$ for all $t\ge 0$, almost surely, thanks to Theorem~\ref{th:well-posed}, then $\underset{z\in\Ma_\md}\min\overline{F}_t(z)\ge m$ for all $t\ge 0$, almost surely, for some $m\in(0,\infty)$.

Moreover, $\overline{F}_t(z)=\overline{\mu}_t\bigl(K(z,\cdot)\bigr)$; thanks to Assumption~\ref{ass:kernel} and to the ODE~\eqref{eq:ODE}, for every $k\in\left\{0,1,\ldots\right\}$, there exists $C^{(k)}\in(0,\infty)$ such that
\begin{equation}\label{eq:borne_ddr}
\underset{z\in \Ma_\md}\sup\big|\frac{d\bigl(\partial^kA_t(z)\bigr)}{dt}\big|\le \frac{C^{(k)}}{1+t}.
\end{equation}

For every $t>0$, every $\epsilon\in(-t,1)$, note that
\[
\mathcal{L}_{\mathcal{Y}}^{A_{t+\epsilon}}\Psi(A_{t+\epsilon},\cdot)-\mathcal{L}_{\mathcal{Y}}^{A_t}\Psi(A_t,\cdot)=0,
\]
thanks to~\eqref{eq:generators}. Passing to the limit $\epsilon\to 0$ yields
\begin{align*}
\mathcal{L}_{\mathcal{Y}}^{A_t}\frac{\partial \Psi(A_t,\cdot)}{\partial t}&=-\Bigl(\frac{\partial \mathcal{L}_{\mathcal{Y}}^{A_t}}{\partial t}\Bigr)\Psi(A_t,\cdot)\\
&=\Bigl(\frac{d A_t\circ\xi_\St}{dt}\mathcal{L}_{Y}^{A_t}+\frac{d}{dt}\Bigl(e^{A_t\circ\xi_\St}\langle \Dr(V,A_t),\nabla\cdot\rangle\Bigr)\Bigr)\bigl(\Psi(A_t,\cdot)\bigr).
\end{align*}
Considering each example for the definition of the drift function $\Dr(V,B_s)$, it is straightforward to check that $-\Bigl(\frac{d\mathcal{L}_{\mathcal{Y}}^{A_t}}{ds}\Bigr)\Psi(A_t,\cdot)\in\mathcal{C}$ is a function of class $\mathcal{C}^\infty$ with polynomial growth; and, moreover, that for every $k\in\left\{0,1,\ldots\right\}$, there exist $p_k\ge 0$ and $C^{(k)}\in(0,\infty)$ such that
\[
\underset{y\in \St}\sup\big|D^k\Bigl(\frac{\partial \mathcal{L}_{\mathcal{Y}}^{A_t}}{\partial t}\Bigr)\Psi(A_t,\cdot)(x)\big|\le \frac{C^{(k)}(1+|x|^{p_k})}{1+t},
\]
thanks to the inequality~\eqref{eq:borne_ddr}, and the estimate~\eqref{eq:propo_Poisson_2_Dy} on the gradient $\nabla_x\Psi(A_t,x)$.

Thanks to Proposition~\ref{propo:Poisson_1}, one then concludes the proof of~\eqref{eq:propo_Poisson_2_derivative}.
\end{itemize}

\subsection{The infinite dimensional case}\label{sec:appendix_infinie}

The aim of this section is to provide a proof of the estimates~\eqref{eq:Poisson_SPDE_estimate} which are specific to the infinite dimensional framework. As explained above, we only focus on the case $A=0$, and to simplify notation, $\overline{\varphi}=\overline{\varphi}^0$ and $\Psi=\Psi(0,\cdot)$.

Introduce the semi-group $\bigl(P_t\bigr)_{t\ge 0}$, such that for all $t\ge 0$
\[
P_t\overline{\varphi}(u)=\E_u[\overline{\varphi}^0(u(t))].
\]

\subsubsection{First-order derivative}
We claim that
\begin{equation}\label{eq:proof_Poisson_SPDE}
\begin{aligned}
|\langle D(P_t\overline{\varphi}(u),h\rangle|&\le e^{-\gamma t}\sup_{v\in H}\|D\overline{\varphi}(v)\| \|h\|,~\forall~t\ge 0\\
|\langle D(P_t\overline{\varphi}(u),h\rangle|&\le \frac{C_\alpha \sup_{v\in H}\|D\overline{\varphi}(v)\|}{t^{2\alpha}}\|h\|_{-2\alpha},~\forall~t\in(0,1].
\end{aligned}
\end{equation}

Then using the semi-group property $P_t=P_1P_{t-1}$, for all $t\ge 1$,
\begin{align*}
|\langle D(P_t\overline{\varphi}(u),h\rangle|&\le C_\alpha \sup_{v\in H}\|D(P_{t-1}\overline{\varphi})(v)\|\|h\|_{-2\alpha}\\
&\le C_\alpha e^{-\gamma(t-1)}\sup_{v\in H}\|D\overline{\varphi}(v)\|\|h\|_{-2\alpha}.
\end{align*}

By integrating, this yields the first estimate in~\eqref{eq:Poisson_SPDE_estimate}.

It remains to prove the claim~\eqref{eq:proof_Poisson_SPDE}. Note that
\[
\langle DP_t\overline{\varphi}(u),h\rangle=\E[\langle D\overline{\varphi}^0(u(t)),\eta^h(t,u)\rangle],
\]
where $\eta^h(0,u)=h$ and
\[
d\eta^h(t,u)=L\eta^h(t,u)dt-\mathcal{V}''(u(t))\eta^h(t,u)dt.
\]
The proof of the first inequality of~\eqref{eq:proof_Poisson_SPDE} is straightforward:
\begin{align*}
\frac{1}{2}\frac{d\|\eta^h(t,u)\|^2}{dt}&=\langle L\eta^h(t,u),\eta^h(t,u)\rangle-\langle \mathcal{V}''(u(t))\eta^h(t,u),\eta^h(t,u)\rangle\\
&\le -(\lambda_1-\sup_{x\in\mathbb{R}}|\mathcal{V}''(x)|)\|\eta^h(t,u)\|^2\\
&\le e^{-\gamma t}\|h\|^2,
\end{align*}
with $\gamma=\lambda_1-\sup_{x\in\mathbb{R}}|\mathcal{V}''(x)|>0$ thanks to~\eqref{eq:ass_V_SPDE}.

The second inequality of~\eqref{eq:proof_Poisson_SPDE} is obtained using the mild formulation of the equation for $\eta^h(t,u)$ and regularization properties of the semi-group $\bigl(e^{tL}\bigr)_{t\ge 0}$:
\begin{align*}
\|\eta^h(t,u)\|_H&\le \|e^{tL}h\|_H+\int_{0}^{t}\|e^{(t-s)L}\bigl(\mathcal{V}''(u(s))\eta^h(s,u)\bigr)\|_Hds\\
&\le C(\alpha)t^{-2\alpha}\|(-L)^{-2\alpha}h\|_H+C\int_{0}^{t}\|\eta^h(s,u)\|_Hds,
\end{align*}
and $\|(-L)^{-2\alpha}h\|_H=\|h\|_{-2\alpha}$. Thanks to Gronwall Lemma, there exists $C(\alpha)\in(0,\infty)$ such that for all $t\in(0,1]$,
\[
\|\eta^h(t,u)\|_H\le C(\alpha)t^{-2\alpha}\|h\|_{-2\alpha},
\]
which yields the required estimate.

\subsubsection{Second-order derivative}

We claim that, for some $\tilde{\gamma}\in(0,\gamma)$,
\begin{equation}\label{eq:proof_Poisson_SPDE_2}
\begin{aligned}
|D^2(P_t\overline{\varphi}(u).(h,k)|&\le C\bigl(\sup_{v\in H}\|D\overline{\varphi}(v)\|+\sup_{v\in H}\|D^2\overline{\varphi}(v)\|\bigr)e^{-\tilde{\gamma} t} \|h\|\|k\|,~\forall~t\ge 0\\
|D^2(P_t\overline{\varphi}(u).(h,k)|&\le \frac{C_\alpha\bigl(\sup_{v\in H}\|D\overline{\varphi}(v)\|+\sup_{v\in H}\|D^2\overline{\varphi}(v)\|\bigr)}{t^{2\alpha}}\|h\|_{-\alpha}\|k\|_{-\alpha},~\forall~t\in(0,1].
\end{aligned}
\end{equation}
The proof uses the following identity:
\[
D^2(P_t\overline{\varphi}(u).(h,k)=\E\bigl[\langle D\overline{\varphi}(u(t)),\zeta^{h,k}(t,u)\rangle\bigr]+\E\bigl[D^2\overline{\varphi}(u(t)).(\eta^h(t,u),\eta^k(t,u))\bigr],
\]
where $\zeta^{h,k}(0,u)=0$ and
\[
d\zeta^{h,k}(t,u)=L\zeta^{h,k}(t,u)dt-\mathcal{V}''(u(t))\zeta^{h,k}(t,u)dt-\mathcal{V}^{(3)}(u(t))\eta^h(t,u)\eta^h(t,u)dt.
\]

The two following inequalities are used in the proof:
\begin{itemize}
\item the Gagliardo-Nirenberg inequality, for every $v\in H_0^1(0,1)$
\[
\|v\|_{L^\infty(0,1)}\le C\|v\|_{L^2(0,1)}^{\frac12}\|v\|_{H^1(0,1)}^{\frac12},
\]
combined with $\|v\|_{H^1(0,1)}\le C\|(-L)^{\frac12}v\|_{L^2(0,1)}=C\|v\|_{\frac12}$,
\item the Sobolev inequality $\|\cdot\|_{L^4(0,1)}\le C\|\cdot\|_{\frac18}$, which implies $\|e^{tL}\|_{\mathcal{L}(L^2(0,1),L^4(0,1))}\le Ct^{-\frac18}$.
\end{itemize}

To prove the first inequality in~\eqref{eq:proof_Poisson_SPDE_2}, an energy estimate and the use of the Gagliardo-Nirenberg and Young inequalities, give
\begin{align*}
\frac{1}{2}\frac{d\|\zeta^{h,k}(t,u)\|_{L^2(0,1)}^2}{dt}&+\|(-L)^{\frac12}\zeta^{h,k}(t,u)\|_{L^2(0,1)}^2\le \|\mathcal{V}''\|_\infty\|\zeta^{h,k}(t,u)\|_{L^2(0,1)}^2\\
&+\|\mathcal{V}^{(3)}\|_\infty \|\zeta^{h,k}(t,u)\|_{L^\infty(0,1)}\|\eta^h(t,u)\|_{L^2(0,1)}\|\eta^k(t,u)\|_{L^2(0,1)}\\
&\le (\lambda_1-\gamma+\epsilon)\|\zeta^{h,k}(t,u)\|_{L^2(0,1)}^2+\epsilon\|(-L)^{\frac12}\zeta^{h,k}(t,u)\|_{L^2(0,1)}^2\\
&+\frac{C}{\epsilon}\|\eta^h(t,u)\|_{L^2(0,1)}^2\|\eta^k(t,u)\|_{L^2(0,1)}^2,
\end{align*}
for $\epsilon>0$ sufficiently small.

Using the Poincar\'e inequality $\|(-A)^{\frac12}\cdot\|_{L^2(0,1)}^{2}\ge \lambda_1\|\cdot\|_{L^2(0,1)}$, then
\begin{align*}
\frac{1}{2}\frac{d\|\zeta^{h,k}(t,u)\|_{L^2(0,1)}^2}{dt}&\le -\bigl(\gamma-(1+\lambda_1)\epsilon\bigr)\|\zeta^{h,k}(t,u)\|_{L^2(0,1)}^2+C_\epsilon e^{-4\gamma t}\|h\|_{L^2(0,1)}^2\|k\|_{L^2(0,1)}^2.
\end{align*}
By Gronwall Lemma, since $\zeta^{h,k}(0,u)=0$,
\begin{align*}
\|\zeta^{h,k}(t,u)\|_{L^2(0,1)}^2&\le \int_{0}^{t}e^{-2\bigl(\gamma-(1+\lambda_1)\epsilon\bigr)(t-s)}e^{-4\gamma s}ds\|h\|_{L^2(0,1)}^2\|k\|_{L^2(0,1)}^2\\
&\le C_\epsilon e^{-2\bigl(\gamma-(1+\lambda_1)\epsilon\bigr)t}\|h\|_{L^2(0,1)}^2\|k\|_{L^2(0,1)}^2
\end{align*}

This concludes the proof of the first estimate in~\eqref{eq:proof_Poisson_SPDE_2}.

To obtain the second inequality in~\eqref{eq:proof_Poisson_SPDE_2}, it is sufficient to estimate (using the mild formulation)
\begin{align*}
\|\zeta^{h,k}(t,u)\|&\le \int_{0}^{t}\|e^{(t-s)L}\bigl(\mathcal{V}''(u(s))\zeta^{h,k}(s,u)\bigr)\|ds+\int_{0}^{t}\|e^{(t-s)L}\bigl(\mathcal{V}^{(3)}(u(s))\eta^h(s,u)\eta^k(s,u)\bigr)\|ds\\
&\le C\int_{0}^{t}\|\zeta^{h,k}(s,u)\|ds+\int_{0}^{t}\|e^{(t-s)L}\|_{\mathcal{L}(L^1,L^2)}\|\eta^h(s,u)\| \|\eta^k(s,u)\|ds\\
&\le C\int_{0}^{t}\|\zeta^{h,k}(s,u)\|ds+\int_{0}^{t}\frac{C_{\alpha,\epsilon}}{(t-s)^{\frac14+\epsilon}s^{2\alpha}}ds\|(-L)^{-\alpha}h\|\|(-L)^{-\alpha}k\|\\
&\le C\int_{0}^{t}\|\zeta^{h,k}(s,u)\|ds+\frac{C_\alpha}{t^{2\alpha}}\|h\|_{-\alpha}\|k\|_{-\alpha},
\end{align*}
and the conclusion follows from Gronwall Lemma.

\end{appendix}

\bibliographystyle{abbrv}


\end{document}